\newtheorem{lemma}{Lemma}
\newtheorem{theorem}{Theorem}
\newtheorem{corollary}{Corollary}
\newtheorem{example}{Example}
\DeclareMathOperator{\dist}{dist}
\DeclareMathOperator{\loc}{loc}
\newcommand{\cA}{\mathcal{A}}
\newcommand{\cL}{\mathcal{L}}
\newcommand{\cJ}{\mathcal{J}}
\newcommand{\R}{\mathbf{R}}
\newcommand{\pr}{\mathbf{P}}
\newcommand{\ex}{\mathbf{E}}
\newcommand{\Rd}{\mathbf{R}^d}
\newcommand{\N}{\mathbf{N}}
\begin{document}
\title[IU for Schr{\"o}dinger operators based on fractional Laplacians]{Intrinsic ultracontractivity for Schr{\"o}dinger operators based on fractional Laplacians}
\author{ Kamil Kaleta, Tadeusz Kulczycki}
\address{Kamil Kaleta, Institute of Mathematics and Computer Science \\ Wroc{\l}aw University of Technology 
\\ Wyb. Wyspia{\'n}skiego 27, 50-370 Wroc{\l}aw, Poland}
\email{kamil.kaleta@pwr.wroc.pl}
\address{Tadeusz Kulczycki, Institute of Mathematics, Polish Academy of Sciences, ul. Kopernika 18, 51-617 Wroc{\l}aw, Poland.  Institute of Mathematics and Computer Science, Wroc{\l}aw University of Technology, Wyb. Wyspia{\'n}skiego 27, 50-370 Wroc{\l}aw, Poland}
\email{tkulczycki@impan.pan.wroc.pl}
\thanks{The authors were partially supported by KBN grant.}

\begin{abstract}
We study the Feynman-Kac semigroup generated by the Schr{\"o}dinger operator based on the fractional Laplacian $-(-\Delta)^{\alpha/2} - q$ in $\Rd$, for $q \ge 0$, $\alpha \in (0,2)$. We obtain sharp estimates of the first eigenfunction $\varphi_1$ of the Schr{\"o}dinger operator and conditions equivalent to intrinsic ultracontractivity of the Feynman-Kac semigroup. For potentials $q$ such that $\lim_{|x| \to \infty } q(x) = \infty$ and comparable on unit balls we obtain that $\varphi_1(x)$ is comparable to $(|x| + 1)^{-d - \alpha} (q(x) + 1)^{-1}$ and intrinsic ultracontractivity holds iff $\lim_{|x| \to \infty} q(x)/\log|x| = \infty$. Proofs are based on uniform estimates of $q$-harmonic functions.
\end{abstract}

\maketitle

\section{Introduction and statement of results}

The aim of this paper is to study intrinsic ultracontractivity  for Feynman-Kac semigroups generated by Schr{\"o}dinger operators based on fractional Laplacians and obtain sharp estimates of the first eigenfunction of these operators. Mainly we use probabilistic methods.

Let $X_t$ be a symmetric $\alpha$-stable process in $\Rd$, $d \in \N$, $\alpha \in (0,2)$. This process is a Markov process with independent and homogeneous increments and the characteristic function of  the form $\ex^0(\exp(i \xi X_t)) = \exp(-t |\xi|^{\alpha})$, $\xi \in \Rd$, $t > 0$. As usual $\ex^x$, $x \in \Rd$ denotes the expected value of the process starting from $x \in \Rd$. 

The Feynman-Kac semigroup $(T_t)$, $t > 0$ for $X_t$ and a locally bounded, measuarable potential $0 \le q(x) < \infty$ is defined as follows
\begin{align}
\label{def:FKS}
T_t f(x) & = \ex^x\left[\exp\left(-\int_0^t q(X_s) \, ds\right)f(X_t)\right]
\, , & f \in L^2(\R^d), x \in \R^d
\, .
\end{align}
The generator of this semigroup is the Schr{\"o}dinger operator based on fractional Laplacian
$$
-(-\Delta)^{\alpha/2} - q.
$$

In recent years Schr{\"o}dinger operators based on non-local pseudodifferential operators have been intensively studied. 
For example in 2008 R. Frank, E. Lieb and R. Seiringer \cite{bib:FLS1} showed Hardy-Lieb-Thirring inequality for such Schr{\"o}dinger operators. This was done in connections with the problem of the stability of relativistic matter, which problem is closely related to non-local Schr{\"o}dinger operators and has been widely studied see e.g. \cite{bib:FLS2, bib:FL, bib:LY, bib:LSS}.  In the last 20 years there were obtained many results for Schr{\"o}dinger operators based on fractional Laplacians \cite{bib:C, bib:CMS, bib:Z, bib:CS1, bib:CS2, bib:Ku2, bib:BB1, bib:BB2, bib:CS}. These results concern the conditional gauge theorem, $q$-harmonic functions, intrinsic ultracontractivity, estimates of eigenfunctions. Most of these results are obtained for Schr{\"o}dinger operators on bounded domains and not on the whole $\Rd$ as in our paper.

The paper which is the most related to our paper is \cite{bib:KS}, where similar problems were studied for the Schr{\"o}dinger operator $-((-\Delta + m^{2/\alpha})^{\alpha/2} - m) - q$, where $m > 0$. The operator $-((-\Delta + m^{2/\alpha})^{\alpha/2} - m)$ for $m > 0$ is an infinitesimal generator of the relativistic $\alpha$-stable process \cite{bib:R}. It is worth to point out that there are huge differences between our paper and \cite{bib:KS}. Our paper not only concerns different Schr{\"o}dinger operators $-(-\Delta)^{\alpha/2} - q$ but uses completely new methods. These methods may be described as the use of uniform estimates of $q$-harmonic functions in proving intrinsic ultracontractivity. We take these methods from M. Kwa{\'s}nicki paper \cite{bib:Kw}, where he used uniform boundary Harnack principle (uBHP) for $\alpha$-harmonic functions (shown in \cite{bib:BKK}) in proving intrinsic ultracontractivity. It is worth to point out that both the proof of uBHP in \cite{bib:BKK} and our uniform estimates of $q$-harmonic functions (Lemma \ref{lm:est}, Theorem \ref{th:bhi}, Corollary \ref{cor:bhisc}) use a very important idea from R. Song and J. M. Wu  paper \cite[proof of Lemma 3.3]{bib:SW}. Let us also note that the results proven in our paper are much sharper than those in \cite{bib:KS}. In particular we obtain characterization of intrinsic ultracontractivity and sharp estimates of the first eigenfunction (Theorem \ref{th:eig}, Theorem \ref{th:char}) for much wider class of potentials $q$ than in Theorem 1.6 in \cite{bib:KS}. This gives e.g. a very natural property of intrinsic ultracontractivity stated in Corollary \ref{cor:order}. There is no such result in \cite{bib:KS}. 

Now we introduce notation needed in formulating our results. The Feynman-Kac semigroup $(T_t)$ is given by the kernel $u(t,x,y)$, that is 
$$
T_t f(x) = \int_{\Rd} u(t,x,y) f(y) \, dy, \quad x \in \Rd, \quad f \in L^2(\Rd).
$$
For each $t > 0$ the kernel $u(t,\cdot,\cdot)$ is continuous and bounded on $\Rd \times \Rd$. For any $t > 0$, $x,y \in \Rd$ the kernel is strictly positive. The proof of these properties is standard. It is similar to the proofs for the classical Feynman-Kac semigroups (see e.g. \cite{bib:CZ}). For the convenience of the reader we write the short proof of properties of $u(t,x,y)$ in Lemma \ref{lm:kernel}.

Our first result gives a simple criterion of the compactness of operators $T_t$. By $L_{\loc}^{\infty}$ we denote the class of locally bounded, measurable functions $q: \Rd \to \R$.

\begin{lemma}
\label{lm:compact}
Let $q \in L_{\loc}^{\infty}$, $q \ge 0$. If $q(x) \to \infty$ as $|x| \to \infty$ then for all $t > 0$ operators $T_t$ are compact.
\end{lemma}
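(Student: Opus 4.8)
The plan is to realize $T_t$ as an operator-norm limit of Feynman--Kac semigroups of the process killed outside large balls — which are compact by the classical bounded-domain theory — and to control the error using the growth of $q$. For $R>0$ put $B_R=\{|x|<R\}$, let $\tau_{B_R}$ be the first exit time of the process from $B_R$, and let $T_t^{(R)}$ be the Feynman--Kac semigroup of $X_t$ killed on exiting $B_R$, i.e.\ the integral operator with kernel $u^{B_R}(t,x,y)=\ex^x[\exp(-\int_0^tq(X_s)\,ds);\,t<\tau_{B_R},\,X_t\in dy]/dy$, supported on $B_R\times B_R$. Since $q\ge 0$ we have $u^{B_R}(t,x,y)\le u(t,x,y)\le p(t,x,y)$, where $p$ is the transition density of $X_t$, and, by the Chapman--Kolmogorov identity and symmetry of the kernel, $\int_{\Rd}u^{B_R}(t,x,y)^2\,dy=u^{B_R}(2t,x,x)\le p(2t,0,0)$; hence $T_t^{(R)}$ is Hilbert--Schmidt, so compact. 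It is therefore enough to prove that $\|T_t-T_t^{(R)}\|\to 0$ as $R\to\infty$.

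The operator $S_R:=T_t-T_t^{(R)}$ is symmetric with nonnegative kernel $u(t,x,y)-u^{B_R}(t,x,y)$, so by the Schur test
\[
\|S_R\|\ \le\ \sup_{x\in\Rd}\int_{\Rd}\bigl(u(t,x,y)-u^{B_R}(t,x,y)\bigr)\,dy\ =\ \sup_{x\in\Rd}\ex^x\!\left[\exp\!\Bigl(-\int_0^tq(X_s)\,ds\Bigr);\ \tau_{B_R}\le t\right].
\]
I would bound the right-hand side by splitting on the size of $|x|$. If $|x|<R/2$, then exiting $B_R$ before time $t$ forces $\sup_{s\le t}|X_s-x|\ge R/2$, so the expectation is at most $\pr^0(\sup_{s\le t}|X_s|\ge R/2)$, which tends to $0$ as $R\to\infty$ by the standard maximal estimate $\pr^0(\sup_{s\le t}|X_s|>r)\le c\,t\,r^{-\alpha}$ for the symmetric $\alpha$-stable process. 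If $|x|\ge R/2$, bound the expectation by $T_t\mathbf 1(x)=\ex^x[\exp(-\int_0^tq(X_s)\,ds)]$; here the hypothesis $q(x)\to\infty$ is used, via the observation that it forces $T_t\mathbf 1(x)\to 0$ as $|x|\to\infty$. Indeed, writing $m(\rho):=\inf_{|y|>\rho}q(y)$ (so $m(\rho)\to\infty$) and separating the event that the path stays in $\{|y|>|x|/2\}$ up to time $t/2$ — on which the exponential weight is at most $e^{-\frac t2 m(|x|/2)}$ because the killing rate there is at least $m(|x|/2)$ — from its complement, one gets
\[
T_t\mathbf 1(x)\ \le\ e^{-\frac t2\,m(|x|/2)}\ +\ \pr^x\!\bigl(\,\sup\nolimits_{s\le t}|X_s-x|>|x|/2\,\bigr)\ \le\ e^{-\frac t2\,m(|x|/2)}+c\,t\,(|x|/2)^{-\alpha}.
\]
Combining the two cases gives $\|S_R\|\le\sup_{|x|\ge R/2}T_t\mathbf 1(x)+c\,t\,(R/2)^{-\alpha}\to 0$, whence $T_t=\lim_{R\to\infty}T_t^{(R)}$ in operator norm and $T_t$ is compact.

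I expect the only genuinely non-routine point to be the claim $T_t\mathbf 1(x)\to 0$ as $|x|\to\infty$: this is precisely where the growth of the killing rate is balanced against the fact that the $\alpha$-stable process needs time of order $r^{\alpha}$ to travel a distance $r$; everything else (compactness of $T_t^{(R)}$, the Schur bound, and the maximal estimate $\pr^0(\sup_{s\le t}|X_s|>r)\le c\,t\,r^{-\alpha}$) is classical. A purely analytic alternative is also available: one identifies $T_t=e^{-tH}$, where $H$ is the form sum of $(-\Delta)^{\alpha/2}$ and $q$ with form domain $H^{\alpha/2}(\Rd)\cap L^2(\Rd,q\,dx)$, and shows that $H$ has compact resolvent because its form domain embeds compactly into $L^2(\Rd)$ — local compactness following from the fractional Rellich theorem (a norm-bounded sequence in the form domain is bounded in $H^{\alpha/2}$) and tightness at infinity from $\int_{\{|x|>R\}}|u|^2\le m(R)^{-1}\int q|u|^2$.
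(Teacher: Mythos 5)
Your argument is correct and follows essentially the same strategy as the paper: approximate $T_t$ in operator norm by Hilbert--Schmidt operators obtained by spatial truncation, and control the error by the decay of $T_t\mathbf 1$ at infinity, which itself comes from balancing the strong killing far out against the small probability of a long excursion of the $\alpha$-stable process. The paper's Lemma~\ref{lm:cmpct1} truncates the kernel, replacing $u(t,x,y)$ by $u(t,x,y)\chi_{B(0,r)}(y)$, and bounds the error via Cauchy--Schwarz (using $\int u(t,x,y)\,dy\le 1$) directly by $\sup_{|y|>r}T_t\chi_{\Rd}(y)$; you instead kill the process outside $B_R$ and use Schur's test, which forces you to control $\sup_x\ex^x[e_q(t);\,\tau_{B_R}\le t]$ over all $x$ and hence to invoke the maximal inequality $\pr^0(\sup_{s\le t}|X_s|>r)\le ctr^{-\alpha}$ for the near region. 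For the decay of $T_t\mathbf 1$, the paper splits on $\{\tau_{B(x,1)}\ge t\}$ and bounds the complementary term by $\ex^0[e^{-\inf_{B(x,1)}q\cdot\tau_{B(0,1)}}]$ (then uses dominated convergence), while you split on whether the path stays in $\{|y|>|x|/2\}$ up to time $t/2$ and again invoke the maximal estimate — both are sound, yours being marginally more quantitative and the paper's marginally more economical. Your appended analytic alternative (compact resolvent from compact embedding of the form domain) is a genuinely different route and also correct, but since you present it only as a remark, the substance of your submitted proof coincides with the paper's.
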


On the other hand, if there is an infinite sequence of disjoint unit balls such that $q$ is bounded on this sequence, then $T_t$ are not compact (for justification of this statement see the proof of Theorem 1.1 in \cite{bib:KS}, page 5039).

When for all $t > 0$ operators $T_t$ are compact, then the general theory of semigroups (see e.g. \cite{bib:D}) gives the following standard results. There exists an orthonormal basis in $L^2(\Rd)$ consisting of eigenfunctions $\varphi_n$ such that $T_t \varphi_n = e^{-\lambda_n t} \varphi_n$, where $0 < \lambda_1 < \lambda_2 \le \lambda_3 \le \ldots \to \infty$. All $\varphi_n$ are continuous and bounded. The first eigenfunction $\varphi_1$ can be assumed to be strictly positive. 

Let us assume that for all $t > 0$ operators $T_t$ are compact. The semigroup $(T_t)$ is called {\it{intrinsically ultracontractive}} (abbreviated as IU) if for each $t > 0$ there is a constant $C_{q,t}$ such that 
\begin{equation}
\label{def:IU}
u(t,x,y) \le C_{q,t} \, \varphi_1(x) \varphi_1(y), \quad x,y \in \Rd.
\end{equation}

The notion of IU was introduced in \cite{bib:DS} for very general semigroups. Important examples of such semigroups are the semigroups of elliptic operators $H_0$ and the semigroups of Schr{\"o}dinger operators $H = H_0 - q$ both on $\Rd$, as well on domains $D \subset \Rd$ with Dirichlet boundary conditions. IU for such semigroups has been widely studied, see e.g. \cite{bib:B, bib:Da, bib:D, bib:BD}. IU has also been studied for semigroups generated by $-(-\Delta)^{\alpha/2}$ and $-(-\Delta)^{\alpha/2} - q$ on bounded domains \cite{bib:CS1, bib:CS2, bib:Ku2}.

The classical result for the Feynman Kac semigroup $(T_t)$ on $\Rd$ generated by $H = \Delta - q$ is the following fact (Corollary 4.5.5, Theorem 4.5.11 and Corollary 4.5.8 in \cite{bib:D}, cf. also \cite{bib:DS}). If $q(x) = |x|^{\beta}$, then $(T_t)$ is IU iff $\beta > 2$. Moreover for $\beta > 2$ we have $c f(x) \le \varphi_1(x) \le C f(x)$, $|x| > 1$, where $f(x) = |x|^{-\beta/4 + (d - 1)/2} \exp(-2 |x|^{1 + \beta/2} /(2+\beta))$.

Now we come to formulating main results of our paper. The Feynman-Kac functional is defined as $e_q(t)=\exp(-\int_0^t q(X_s)ds)$, $t > 0$. For $q \in L_{\loc}^{\infty}$, $q \ge 0$ and an open set $D \subset \R^d$ and $x \in D$ we  define 
\begin{align*}
v_D(x) = \ex^x \left[\int_0^{\tau_D} e_q(t) dt\right]
\, ,
\end{align*}
where $\tau_D = \inf\{t > 0: \, X_t \notin D\}$ is the first exit time from $D$. For a regular (say bounded Lipschitz) open set $D$ we have $v_D(x) = \int_D V_D(x,y)dy$, where $V_D(x,y)$  is a {\it{$q$-Green function}} of $D$ (for a definition of $V_D(x,y)$ see Preliminaries).

The next theorem gives sharp estimates of the first eigenfunction. 

\begin{theorem}
\label{th:eig}
Let $q \in L^\infty_{\loc}$, $q \geq 0$ and $q(x) \rightarrow \infty$ as $|x| \rightarrow \infty$. Then there exist constants $C_q^{(1)}$ and $C_q^{(2)}$ such that for all $x \in \R^d$ and $D=B(x,1)$
\begin{align}
\label{eq:eig0}
  \frac{C_q^{(1)} v_D(x)}{(1+|x|)^{d+\alpha}}
  & \le
   \, \varphi_1(x)
   \le
    \frac{C_q^{(2)} v_D(x)}{(1+|x|)^{d+\alpha}}
  \, .
\end{align}
Additionally, $v_D(x)$ can be replaced by $\int_{\R^d}V(x,y)dy$, where $V(x,y) = \int_0^{\infty} u(t,x,y) \, dt$.
\end{theorem}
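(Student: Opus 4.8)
The plan is to convert the eigenvalue relation into a local mean value identity on the ball $D=B(x,1)$ and then read off both inequalities from the $\alpha$-stable Lévy kernel $\nu(u)=\mathcal{A}_{d,\alpha}|u|^{-d-\alpha}$. Integrating $T_t\varphi_1=e^{-\lambda_1 t}\varphi_1$ over $t\in(0,\infty)$ gives $\varphi_1(x)=\lambda_1\int_{\Rd}V(x,y)\varphi_1(y)\,dy$, and applying the strong Markov property at $\tau_D$ (using $\ex^z[\int_0^\infty e_q(t)\varphi_1(X_t)\,dt]=\lambda_1^{-1}\varphi_1(z)$ and $\tau_D<\infty$ a.s.) yields
\[
\varphi_1(x)=\lambda_1\,\ex^x\!\Big[\int_0^{\tau_D}e_q(t)\varphi_1(X_t)\,dt\Big]+\ex^x\big[e_q(\tau_D)\varphi_1(X_{\tau_D})\big];
\]
by the Ikeda--Watanabe formula the last term equals $\int_{D^c}\big(\int_D V_D(x,y)\nu(z-y)\,dy\big)\varphi_1(z)\,dz$. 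I will use that $\int_D V_D(x,y)\nu(z-y)\,dy\le C|z-x|^{-d-\alpha}v_D(x)$ when $|z-x|\ge 2$, that $\nu(z-y)\ge c(1+|x|)^{-d-\alpha}$ for $y\in B(x,1)$, $z\in B(0,1)$, and that $v_D(x)\to0$ and $\ex^x[e_q(\tau_D)]\to0$ as $|x|\to\infty$ (because $q\to\infty$). The lower bound is then immediate: for $|x|>2$, drop the first (nonnegative) term and restrict the $z$-integral to $B(0,1)\subset D^c$ to obtain $\varphi_1(x)\ge c(1+|x|)^{-d-\alpha}\big(\int_D V_D(x,y)\,dy\big)\int_{B(0,1)}\varphi_1=c\,c_0\,(1+|x|)^{-d-\alpha}v_D(x)$ with $c_0=\int_{B(0,1)}\varphi_1>0$; for bounded $x$ the whole two-sided estimate is trivial, all three quantities being bounded between positive constants.

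The upper bound is the core of the proof and is where the uniform estimates for $q$-harmonic functions enter. Since $\varphi_1$ is $(q-\lambda_1)$-harmonic in every ball (equivalently $(-\Delta)^{\alpha/2}\varphi_1=(\lambda_1-q)\varphi_1$) and $q-\lambda_1$ is bounded below, Lemma~\ref{lm:est}, Theorem~\ref{th:bhi} and Corollary~\ref{cor:bhisc} give, for each fixed $R$, a Harnack-type bound $\sup_{B(x,R)}\varphi_1\le C_{q,R}\,\varphi_1(x)$ with constant independent of $x$. Fix a large $R_*$ and split $D^c$ into the near region $\{|z-x|\le R_*\}$, the medium region $\{|z-x|>R_*,\ |z|>|x|/2\}$ and the far region $\{|z|\le|x|/2\}$. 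The first term of the identity is $\le C_q\varphi_1(x)v_D(x)$ and the near contribution is $\le C_{q,R_*}\varphi_1(x)\ex^x[e_q(\tau_D)]$, hence both are $\le\tfrac12\varphi_1(x)$ for $|x|$ large and can be absorbed; the medium contribution is $\le C v_D(x)R_*^{-\alpha}h(|x|/2)$ with $h(r):=\sup_{|z|\ge r}\varphi_1$; and the far contribution is $\le C v_D(x)(1+|x|)^{-d-\alpha}\int_{|z|\le|x|/2}\varphi_1$. Thus, for $|x|$ large,
\[
\varphi_1(x)\le C_1 v_D(x)R_*^{-\alpha}h(|x|/2)+C_1 v_D(x)(1+|x|)^{-d-\alpha}\!\!\int_{|z|\le|x|/2}\!\!\varphi_1(z)\,dz .
\]
Bounding $v_D\le c_d$ and $\int_{|z|\le|x|/2}\varphi_1\le\|\varphi_1\|_\infty|B(0,|x|/2)|$ and taking suprema over $|x|\ge R$ gives a recursion $h(R)\le\theta\,h(R/2)+CR^{-\alpha}$ with $\theta=C_1c_dR_*^{-\alpha}$ as small as we like, whence $h(R)\le CR^{-\alpha}$. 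Reinserting this improved decay into the far integral and iterating — each step improving the exponent by $\alpha$ and only requiring $R_*$ to be enlarged so that $\theta$ beats the new power of $2$ — one reaches an exponent exceeding $d$, so that $\varphi_1\in L^1(\Rd)$; one further step, now with $\int_{|z|\le|x|/2}\varphi_1\le\|\varphi_1\|_1$, gives $\varphi_1(x)\le C_q v_D(x)(1+|x|)^{-d-\alpha}$, the bounded range being again trivial.

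Finally, to replace $v_D(x)$ by $w(x):=\int_{\Rd}V(x,y)\,dy$: the strong Markov property gives $w(x)=v_D(x)+\ex^x[e_q(\tau_D)w(X_{\tau_D})]\ge v_D(x)$, so the upper bound with $w$ follows from the one just proved, while for the matching lower bound one proves $w(x)\le C_q v_D(x)$ — $w$ being bounded (a standard consequence of $q\to\infty$), the exit term is $\le\sup_{B(x,2)}w\cdot\ex^x[e_q(\tau_D)]+C\|w\|_\infty v_D(x)$ after using $\int_D V_D(x,y)\nu(z-y)\,dy\le C|z-x|^{-d-\alpha}v_D(x)$ on $\{|z-x|\ge2\}$, and a Harnack-type estimate for $w$ (again an application of the uniform $q$-function estimates) together with $\ex^x[e_q(\tau_D)]\to0$ lets one absorb the first summand into a fraction of $w(x)$ for $|x|$ large. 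The step I expect to be hardest is the upper bound — making the harmonic-measure mass coming from $z$ near $x$ genuinely negligible (this is exactly where the uniform Harnack inequality for $q$-harmonic functions is essential) and then running the decay bootstrap; the oscillation estimate for $w$ in the last step is a similar, secondary technicality.
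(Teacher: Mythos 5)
Your skeleton is the same as the paper's: write $\varphi_1(x)=\lambda_1 V_D\varphi_1(x)+\ex^x[e_q(\tau_D)\varphi_1(X_{\tau_D})]$ (the paper's \eqref{eq:pot3}), represent the exit term by Ikeda--Watanabe, get the lower bound by restricting to $B(0,1)$, and run a decay bootstrap for the upper bound (the paper packages this bootstrap as Lemma~\ref{lm:cruc}; your recursion on $h(R)=\sup_{|z|\ge R}\varphi_1$ is a reasonable variant of the same device). But the lynchpin of your upper bound is a claim that is neither supplied by the cited results nor true in the generality of the theorem: you assert that Lemma~\ref{lm:est}, Theorem~\ref{th:bhi} and Corollary~\ref{cor:bhisc} give an interior Harnack inequality $\sup_{B(x,R)}\varphi_1\le C_{q,R}\,\varphi_1(x)$ with $C_{q,R}$ independent of $x$. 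Those results give a boundary-Harnack--type two-sided bound $f(y)\asymp v_{B}(y)\int_{B^c}|z-x_0|^{-d-\alpha}f(z)\,dz$, which controls the \emph{ratio} $f(x_1)/f(x_2)$ only up to the factor $v_B(x_1)/v_B(x_2)$; that factor is \emph{not} uniform in $q$ (nor, after we fix $q$, uniform over translates of the ball). Concretely, the theorem's hypotheses permit $q$ with tall narrow spikes (say $q\sim n^3$ on $B(x_n,1/n)$ but $q\sim n$ on the rest of $B(x_n,1)$), for which $v_{B(x_n,R)}$ is much smaller at $x_n$ than at nearby points, so $\sup_{B(x_n,R)}\varphi_1/\varphi_1(x_n)\to\infty$. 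Thus the Harnack step is a genuine gap. Moreover, $\varphi_1$ is $(q-\lambda_1)$-harmonic, and $q-\lambda_1$ is negative near the origin, so even if a Harnack-type statement were wanted it would need an argument beyond the $q\ge0$ lemmas you cite.

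The good news is that the Harnack detour is unnecessary, and removing it recovers exactly the paper's proof. In the step where you split $D^c$ into near, medium and far pieces, the near contribution is $\le u_D(x)\sup_{1\le|z-x|\le R_*}\varphi_1(z)$, and for $|x|$ large this $\sup$ is $\le h(|x|/2)$, while $u_D(x)\le C\,v_D(x)$ (Lemma~\ref{lm:est}) and $v_D(x)\to 0$; similarly the term $\lambda_1 V_D\varphi_1(x)\le\lambda_1 v_D(x)\,h(|x|/2)$ once $|x|\ge 2$. So the near piece merges into the medium piece with the small prefactor $v_D(x)$ rather than being absorbed into $\varphi_1(x)$ by Harnack. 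This is precisely what Lemma~\ref{lm:cruc} encodes (its key input being $v_D(x)\to0$), and it is why the paper never needs, or asserts, a Harnack inequality for $\varphi_1$. The same remark applies to your last paragraph: the comparison $\int_{\R^d}V(x,y)\,dy\le C_q v_D(x)$ follows immediately from \eqref{eq:pot1} with $D'=\R^d$, the global bound $\|V\chi_{\R^d}\|_\infty<\infty$ (Lemma~\ref{lm:bounded}) and $u_D\le C v_D$ (Lemma~\ref{lm:est}); no Harnack estimate for $w$ is needed or available.
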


An essential dependence between estimates of the first eigenfuncton and IU already comes out in the classical setting. In our case a knowledge of asymptotic behaviour of the first eigenfunction also leads us to obtain criteria for IU.

\begin{theorem}
\label{th:char}
Let $q \in L^\infty_{\loc}$, $q \geq 0$ and $q(x) \rightarrow \infty$ as $|x| \rightarrow \infty$.
The following conditions are equivalent:
\begin{itemize}
	\item[(i)] The semigroup $(T_t)$ is intrinsically ultracontractive.
	\item[(ii)] For any $t > 0$ there is a constant $C_{q,t}$ such that for all $x,y \in \R^d$ we have \\ $u(t,x,y) \leq C_{q,t} (1+|x|)^{-d-\alpha} (1+|y|)^{-d-\alpha}$.
	\item[(iii)] For any $t > 0$ there is a constant $C_{q,t}$ such that for all $r > 0$, $x \in \overline{B}(0,r)^c$ we have \\ $\ex^x[t<\tau_{\overline{B}(0,r)^c}; e_q(t)] \leq      
	             C_{q,t} (1+r)^{-d-\alpha}$.
	\item[(iv)]  For any $t > 0$ there is a constant $C_{q,t}$ such that for all $x \in \R^d$ we have \\ $T_t \chi_{\R^d}(x) \leq C_{q,t} (1+|x|)^{-d-\alpha}$.
\end{itemize}
\end{theorem}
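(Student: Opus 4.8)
The plan is to prove the equivalence through the cycle (i) $\Rightarrow$ (ii) $\Rightarrow$ (iv) $\Rightarrow$ (iii) $\Rightarrow$ (i); the first three links are short and essentially all the work is in the last one. For (i) $\Rightarrow$ (ii): by the upper bound in Theorem~\ref{th:eig}, $\varphi_1(x)\le C_q^{(2)}v_{B(x,1)}(x)(1+|x|)^{-d-\alpha}$, and since $q\ge 0$ we have $v_{B(x,1)}(x)\le\ex^x\tau_{B(x,1)}=\ex^0\tau_{B(0,1)}<\infty$, so $\varphi_1(x)\le c_q(1+|x|)^{-d-\alpha}$ and (ii) follows from the definition of IU. For (ii) $\Rightarrow$ (iv): $T_t\chi_{\R^d}(x)=\int_{\R^d}u(t,x,y)\,dy\le C_{q,t}(1+|x|)^{-d-\alpha}\int_{\R^d}(1+|y|)^{-d-\alpha}\,dy$, and the last integral converges since $d+\alpha>d$. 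For (iv) $\Rightarrow$ (iii): as $q\ge 0$, $\ex^x[t<\tau_{\overline{B}(0,r)^c};e_q(t)]\le\ex^x[e_q(t)]=T_t\chi_{\R^d}(x)\le C_{q,t}(1+|x|)^{-d-\alpha}\le C_{q,t}(1+r)^{-d-\alpha}$, using $|x|>r$ for $x\in\overline{B}(0,r)^c$. The same idea applied to $u(3t,x,y)=\iint u(t,x,w)u(t,w,v)u(t,v,y)\,dw\,dv\le\big(\sup_{w,v}u(t,w,v)\big)T_t\chi_{\R^d}(x)\,T_t\chi_{\R^d}(y)$ gives the reverse implication (iv) $\Rightarrow$ (ii), so once the cycle closes, (ii), (iii) and (iv) are mutually equivalent.

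For (iii) $\Rightarrow$ (i) I would argue in two steps, the first of which recovers (ii) from (iii). Here one uses that $q(x)\to\infty$ forces $\inf_{|w|\ge R}q(w)\to\infty$, so excursions away from the origin kill $e_q$ quickly, and (iii) is precisely a control of such excursions. Decomposing the path of $X$ started at $x$ at the first exit time from $B(x,1)$ (which lies in $\overline{B}(0,|x|-1)^c$ when $|x|\ge 2$): the contribution to $T_t\chi_{\R^d}(x)$ of paths staying in $B(x,1)$ is at most $C_{q,t}(1+|x|-1)^{-d-\alpha}$ by (iii), and that of paths leaving $B(x,1)$ is, by the strong Markov property and the Ikeda--Watanabe formula, an integral against the killed kernel of $B(x,1)$ and the L\'evy density of $X$; bounding the L\'evy density by $c|\cdot|^{-d-\alpha}$, using $\int_0^t\ex^x[s<\tau_{B(x,1)};e_q(s)]\,ds\le v_{B(x,1)}(x)\le\ex^0\tau_{B(0,1)}$ and the fact that $\int_{|z-x|\ge 2}|z-x|^{-d-\alpha}(1+|z|)^{-d-\alpha}\,dz\le c\,(1+|x|)^{-d-\alpha}$, this contribution is at most a constant times $(1+|x|)^{-d-\alpha}\sup_z(1+|z|)^{d+\alpha}T_{t/2}\chi_{\R^d}(z)$. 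Since $T_t$ is compact, $T_t\chi_{\R^d}$ vanishes at infinity, so $M_t:=\sup_z(1+|z|)^{d+\alpha}T_t\chi_{\R^d}(z)$ is a priori finite; splitting the exit time at $t/2$ and using (iii) also for the long-stay part, one arrives at a self-improving inequality of the form $M_t\le A_{q,t}+\tfrac12 M_t$, which forces $M_t<\infty$, i.e. (ii), and hence also (iv).

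The second step boosts (ii) to intrinsic ultracontractivity. By Chapman--Kolmogorov and (ii) on the middle factor, $u(3t,x,y)\le C_{q,t}\,\Phi_t(x)\Phi_t(y)$ where $\Phi_t(x):=\int_{\R^d}u(t,x,w)(1+|w|)^{-d-\alpha}\,dw=\ex^x[e_q(t)(1+|X_t|)^{-d-\alpha}]$; as every positive time equals $3t$ for some $t>0$, it suffices to prove
\[
\Phi_t(x)\le C_{q,t}\,\varphi_1(x),\qquad x\in\R^d,\ t>0,
\]
since by Theorem~\ref{th:eig} the right-hand side is comparable to $v_{B(x,1)}(x)(1+|x|)^{-d-\alpha}$. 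To prove this, decompose $\Phi_t(x)$ at the exit time from $B(x,1)$: the part where $X$ stays in $B(x,1)$ contributes a multiple of $(1+|x|)^{-d-\alpha}$ times $\ex^x[t<\tau_{B(x,1)};e_q(t)]$, which must be dominated by a multiple of $v_{B(x,1)}(x)$, while the part where $X$ leaves $B(x,1)$ is handled by the Ikeda--Watanabe formula — the exit under $e_q$ producing the factor $v_{B(x,1)}(x)$, the L\'evy density together with (ii) producing the factor $(1+|x|)^{-d-\alpha}$. Dominating the first part uniformly in $x$ is where the uniform estimates of $q$-harmonic functions — Lemma~\ref{lm:est}, Theorem~\ref{th:bhi}, Corollary~\ref{cor:bhisc} — are essential: they control the local behaviour of the Feynman--Kac semigroup near $x$ by $v_{B(x,1)}(x)$ with constants not depending on $x$.

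The main obstacle is precisely this uniform local control. Because $q$ is only assumed locally bounded with $q(x)\to\infty$ and is \emph{not} comparable on unit balls, the Feynman--Kac semigroup may behave very differently from one unit ball to another, so $\ex^x[t<\tau_{B(x,1)};e_q(t)]$ cannot be compared to $v_{B(x,1)}(x)$ by any pointwise manipulation of $q$; the $x$-independent Harnack-type estimates for $q$-harmonic functions — built, as in \cite{bib:BKK} and \cite{bib:Kw}, on the idea from \cite[proof of Lemma 3.3]{bib:SW} — are exactly what is needed. Once they are in place, the remaining bookkeeping (convergence of the $z$-integrals, emergence of the $(1+|x|)^{-d-\alpha}$ factor, and the elementary bound $\ex^x[t<\tau_{B(x,1)};e_q(t)]\le t^{-1}v_{B(x,1)}(x)$, which comes from monotonicity in $t$ of $s\mapsto\ex^x[s<\tau_{B(x,1)};e_q(s)]$) is routine.
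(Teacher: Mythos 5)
Your overall plan is sound and, up to a permutation of the implications, follows the paper's route: the heavy lifting is indeed concentrated in showing that the exit-time condition (iii) gives the pointwise bound (ii)/(iv), after which the upgrade to (i) via $u(3t,x,y)\le C\,\Phi_t(x)\Phi_t(y)$ and $\Phi_t\lesssim v_{B(x,1)}(x)(1+|x|)^{-d-\alpha}$ is a variant of the paper's (iv)$\Rightarrow$(i). The short links (i)$\Rightarrow$(ii)$\Rightarrow$(iv)$\Rightarrow$(iii) are correct and actually a bit cleaner than the paper's (ii)$\Rightarrow$(iii). The elementary inequality $\ex^x[t<\tau_{B(x,1)};e_q(t)]\le t^{-1}v_{B(x,1)}(x)$ by monotonicity of $s\mapsto\chi_{\{s<\tau_D\}}e_q(s)$ is the same observation the paper uses, and your identification of Lemma~\ref{lm:est}, Theorem~\ref{th:bhi}, Corollary~\ref{cor:bhisc} as the essential uniform ingredients is right.

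However, there is a genuine gap in your (iii)$\Rightarrow$(ii) argument. You set $M_t:=\sup_z(1+|z|)^{d+\alpha}T_t\chi_{\R^d}(z)$ and claim it is ``a priori finite'' because compactness forces $T_t\chi_{\R^d}$ to vanish at infinity. That is false: decay to $0$ gives no control on the rate, and $(1+|z|)^{d+\alpha}T_t\chi_{\R^d}(z)$ could perfectly well be unbounded. Moreover, the self-improving inequality $M_t\le A_{q,t}+\tfrac12 M_t$ is vacuous when $M_t=\infty$; it only turns into a bound once finiteness is already known, so it cannot ``force $M_t<\infty$''. This is exactly the issue the paper's bootstrap is designed to avoid: one starts from the trivial bound $T_t\chi_{\R^d}\le 1$ (decay exponent $\gamma=0$), and in each step, combining Corollary~\ref{cor:bhisc} (with $D=B(x,|x|/2)$, not $B(x,1)$), Lemma~\ref{lm:supharm} and the integral estimate~\eqref{eq:cruest}, upgrades the exponent from $\gamma$ to $\gamma'=\min(\gamma+\alpha,d+\alpha)$. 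After $\lfloor 2+d/\alpha\rfloor$ iterations one reaches $d+\alpha$. Each step is finite because it starts from an already-proved bound; your one-shot argument at exponent $d+\alpha$ starts from nothing.

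A secondary issue, repairable but worth flagging: with $D=B(x,1)$ the exit term produced by the Ikeda--Watanabe formula $\cA\int_D V_D(x,y)\int_{D^c}T_{t/2}\chi_{\R^d}(z)\,|z-y|^{-d-\alpha}\,dz\,dy$ cannot be handled by simply ``bounding the L\'evy density by $c|\cdot-x|^{-d-\alpha}$'', since for $z$ with $1\le|z-x|<2$ and $y$ near $\partial D$ the quantity $|z-y|$ can be arbitrarily small. You must split the exit set into a near annulus (handled via the gauge $u_D(x)\le 1$, or $u_D\asymp v_D$ from Lemma~\ref{lm:est}) and a far region $|z-x|\ge|x|/2$, where $|z-y|\asymp|z-x|$ holds uniformly for $y\in D$. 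This is precisely why the paper works with $D=B(x,|x|/2)$ in the bootstrap step and with the gauge function decomposition in the (iv)$\Rightarrow$(i) step.
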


The next corollaries follow immediately from equivalence of conditions (i),(ii) and (i),(iii).

\begin{corollary}
Let $q \in L^\infty_{\loc}$, $q \geq 0$ and $q(x) \rightarrow \infty$ as $|x| \rightarrow \infty$. If the semigroup $(T_t)$ is intrinsically ultracontractive, then each $T_t$ is a Hilbert-Schmidt operator.
\end{corollary}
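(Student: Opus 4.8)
The plan is to read off the conclusion directly from the definition of intrinsic ultracontractivity, using only that $\varphi_1 \in L^2(\Rd)$ and the standard characterization of Hilbert--Schmidt operators. Recall that a bounded operator on $L^2(\Rd)$ with measurable integral kernel $k(x,y)$ is Hilbert--Schmidt precisely when $k \in L^2(\Rd \times \Rd)$, and then its Hilbert--Schmidt norm equals $\|k\|_{L^2(\Rd\times\Rd)}$. By Lemma~\ref{lm:kernel} the operator $T_t$ does have such a kernel, namely $u(t,\cdot,\cdot)$ (which is in fact continuous and bounded), so it suffices to show $u(t,\cdot,\cdot) \in L^2(\Rd\times\Rd)$ for every $t>0$.

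Fix $t > 0$. By hypothesis $(T_t)$ is intrinsically ultracontractive, so by \eqref{def:IU} there is a constant $C_{q,t}$ with $0 \le u(t,x,y) \le C_{q,t}\,\varphi_1(x)\varphi_1(y)$ for all $x,y \in \Rd$. Squaring and integrating over $\Rd \times \Rd$, and using that $\varphi_1$ is a unit vector of the orthonormal basis, gives
\[
  \int_{\Rd}\int_{\Rd} u(t,x,y)^2\,dx\,dy
  \le C_{q,t}^2 \Big(\int_{\Rd}\varphi_1(x)^2\,dx\Big)^2
  = C_{q,t}^2 < \infty,
\]
so $u(t,\cdot,\cdot) \in L^2(\Rd\times\Rd)$ and $T_t$ is Hilbert--Schmidt. (One could equally invoke the equivalent condition (ii) of Theorem~\ref{th:char}, since $x \mapsto (1+|x|)^{-d-\alpha}$ lies in $L^2(\Rd)$ as $2(d+\alpha) > d$; this avoids even mentioning $\varphi_1$.)

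There is no real obstacle here: the entire content is the pointwise IU bound plus $\varphi_1 \in L^2$. The only point worth being careful about is the bookkeeping that $u(t,\cdot,\cdot)$ genuinely is the integral kernel of $T_t$ (guaranteed by Lemma~\ref{lm:kernel}) and the recollection of the kernel characterization of Hilbert--Schmidt operators; with those in place the estimate above is immediate.
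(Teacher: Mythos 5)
Your proof is correct. The paper deduces this corollary from condition (ii) of Theorem~\ref{th:char}, i.e.\ from the bound $u(t,x,y)\le C_{q,t}(1+|x|)^{-d-\alpha}(1+|y|)^{-d-\alpha}$ together with $2(d+\alpha)>d$ --- this is exactly your parenthetical alternative. Your primary argument, using only the defining inequality $u(t,x,y)\le C_{q,t}\,\varphi_1(x)\varphi_1(y)$ and the normalization $\|\varphi_1\|_2=1$, is even more economical: it does not invoke the eigenfunction decay estimates underlying Theorem~\ref{th:char}(ii) at all, and would work for any IU semigroup on $L^2$ whose kernel is an honest measurable function, with no information about the shape of $\varphi_1$. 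In the present setting both routes give the same conclusion with the same amount of effort, since (ii) has already been established; but your primary route better isolates the purely abstract content (IU plus $\varphi_1\in L^2$ implies Hilbert--Schmidt), and the paper's route additionally records an explicit $L^2(\Rd\times\Rd)$ bound on the kernel in terms of the decay rate $(1+|x|)^{-d-\alpha}$.
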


\begin{corollary}
\label{cor:order}
Let $q_1, q_2 \in L^\infty_{\loc}$, $q_1 \geq 0$ and $q_1(x) \rightarrow \infty$ as $|x| \rightarrow \infty$. If the semigroup $(T_t)$ for potential $q_1$ is intrinsically ultracontractive and $q_1 \leq q_2$, then $(T_t)$ for potential $q_2$ is intrinsically ultracontractive.
\end{corollary}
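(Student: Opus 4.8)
The plan is to reduce the statement to the characterization of intrinsic ultracontractivity proved in Theorem \ref{th:char}, specifically to the equivalence (i)$\,\Leftrightarrow\,$(ii). First observe that since $0 \le q_1 \le q_2$ and $q_1(x) \to \infty$ as $|x| \to \infty$, we also have $q_2(x) \to \infty$ as $|x| \to \infty$, and $q_2 \in L^\infty_{\loc}$ with $q_2 \ge 0$. Hence, by Lemma \ref{lm:compact}, the operators $T_t$ for the potential $q_2$ are compact, so the first eigenfunction, the kernel $u(t,x,y)$, and the statement of Theorem \ref{th:char} all make sense for $q_2$. Write $u_{q_1}(t,x,y)$ and $u_{q_2}(t,x,y)$ for the kernels of the Feynman-Kac semigroups associated with $q_1$ and $q_2$, respectively.

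The key step is the pointwise monotonicity $u_{q_2}(t,x,y) \le u_{q_1}(t,x,y)$ for all $t > 0$ and $x,y \in \R^d$. Since $q_2 \ge q_1 \ge 0$, along every trajectory we have $\int_0^t q_1(X_s)\,ds \le \int_0^t q_2(X_s)\,ds$, hence $e_{q_2}(t) \le e_{q_1}(t)$, and therefore from \eqref{def:FKS} the operator inequality $T_t^{q_2} f \le T_t^{q_1} f$ holds for every nonnegative $f \in L^2(\R^d)$. Since for each fixed $t$ the kernels $u_{q_1}(t,\cdot,\cdot)$ and $u_{q_2}(t,\cdot,\cdot)$ are continuous on $\R^d \times \R^d$ (Lemma \ref{lm:kernel}), testing this inequality against nonnegative $f$ supported near an arbitrary point $y$ and letting the supports shrink upgrades the almost-everywhere inequality $u_{q_2}(t,x,\cdot) \le u_{q_1}(t,x,\cdot)$ to an everywhere inequality; since $x$ is arbitrary, the claimed bound follows on all of $\R^d \times \R^d$.

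Now suppose $(T_t)$ for the potential $q_1$ is intrinsically ultracontractive. By the implication (i)$\,\Rightarrow\,$(ii) of Theorem \ref{th:char} applied to $q_1$, for every $t > 0$ there is a constant $C_{q_1,t}$ such that $u_{q_1}(t,x,y) \le C_{q_1,t}(1+|x|)^{-d-\alpha}(1+|y|)^{-d-\alpha}$ for all $x,y \in \R^d$. Combining this with the monotonicity from the previous step yields $u_{q_2}(t,x,y) \le C_{q_1,t}(1+|x|)^{-d-\alpha}(1+|y|)^{-d-\alpha}$ for all $x,y \in \R^d$, i.e. condition (ii) of Theorem \ref{th:char} holds for the potential $q_2$ (with the same constants). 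Applying the implication (ii)$\,\Rightarrow\,$(i) of Theorem \ref{th:char} to $q_2$ gives that $(T_t)$ for the potential $q_2$ is intrinsically ultracontractive, which is the assertion.

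The only genuinely delicate point is the kernel comparison $u_{q_2} \le u_{q_1}$: one must be careful to pass from the a.e. ordering of the semigroup operators to a pointwise ordering of the continuous kernels. This is routine given the regularity of $u(t,x,y)$ recorded in Lemma \ref{lm:kernel}; alternatively, one may write $u_{q_i}(t,x,y)$ in terms of the $\alpha$-stable bridge measure and the expectation of $e_{q_i}(t)$ under it, and compare these expectations directly. Everything else in the argument is a straightforward chaining of the already-established characterization.
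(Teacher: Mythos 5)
Your proof is correct and follows the route the paper intends: the paper states that this corollary follows immediately from the equivalences in Theorem \ref{th:char}, and you spell out the (i)$\Leftrightarrow$(ii) version by combining the pointwise monotonicity $u_{q_2}(t,x,y) \le u_{q_1}(t,x,y)$ with the kernel bound for $q_1$. One minor simplification worth noting: the paper also offers condition (iii), and the quantity $\ex^x[t<\tau_{\overline{B}(0,r)^c}; e_q(t)]$ is manifestly nonincreasing in $q$ since $e_{q_2}(t) \le e_{q_1}(t)$ pathwise; using (iii) instead of (ii) makes the comparison immediate at the level of expectations and avoids the a.e.-to-pointwise upgrade of the kernel inequality that you correctly flag as the only delicate point.
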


A simple consequence of Theorem \ref{th:char} is the sufficient condition for IU, which can be formulated in terms of the behaviour of the potential $q$ at infinity.

\begin{theorem}
\label{th:suff}
Let $q \in L^\infty_{\loc}$, $q \geq 0$. If $\lim_{|x| \rightarrow \infty}\frac{q(x)}{\log |x|} = \infty$, then the operators $T_t$ are compact and the semigroup $(T_t)$ is intrinsically ultracontractive.
\end{theorem}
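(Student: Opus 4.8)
The plan is to derive both conclusions from the results already established. Since $\log|x|\to\infty$, the hypothesis $q(x)/\log|x|\to\infty$ forces $q(x)\to\infty$ as $|x|\to\infty$; hence Lemma \ref{lm:compact} shows that every $T_t$ is compact, and Theorem \ref{th:char} is applicable. It therefore suffices to verify one of the four equivalent conditions there, and I would check condition (iii): the event $\{t<\tau_{\overline{B}(0,r)^c}\}$ keeps the process inside the set $\{|y|>r\}$ for the whole interval $[0,t]$, which is exactly the region where the growth of $q$ can be exploited, so no probabilistic input beyond $e_q(t)\le 1$ is needed.

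Concretely, fix $t>0$ and choose $M>0$ with $Mt\ge d+\alpha$. By the hypothesis there is $R\ge e$ such that $q(y)\ge M\log|y|$ whenever $|y|\ge R$. Let $r\ge R$ and $x\in\overline{B}(0,r)^c$. On the event $\{t<\tau_{\overline{B}(0,r)^c}\}$ one has $|X_s|>r$ for all $s\in[0,t]$, hence $q(X_s)\ge M\log|X_s|\ge M\log r$, so $\int_0^t q(X_s)\,ds\ge Mt\log r\ge(d+\alpha)\log r$ and therefore $e_q(t)\le r^{-d-\alpha}$ on that event. Bounding the indicator by $1$ gives
\[
  \ex^x\left[t<\tau_{\overline{B}(0,r)^c};\, e_q(t)\right]\le r^{-d-\alpha}\le 2^{d+\alpha}\,(1+r)^{-d-\alpha},
\]
where we used $1+r\le 2r$ for $r\ge 1$. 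For $0<r<R$ the left-hand side is at most $\ex^x[e_q(t)]\le 1\le(1+R)^{d+\alpha}(1+r)^{-d-\alpha}$. Taking $C_{q,t}=(1+R)^{d+\alpha}$ (recall $R\ge e$, so this dominates $2^{d+\alpha}$) verifies condition (iii) of Theorem \ref{th:char}, and consequently $(T_t)$ is intrinsically ultracontractive.

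I do not anticipate a genuine obstacle here; the only real decision is which equivalent condition to verify. Condition (iii) is the convenient one because the confinement is built into the event, so the estimate collapses to the elementary bound $e_q(t)\le r^{-Mt}$ together with $e_q\le 1$, and $M$ may be taken as large as we please by the hypothesis. Verifying condition (iv) or (ii) directly would instead require controlling the contribution of paths that move from $\{|x|\text{ large}\}$ toward the origin, where the single-large-jump probability is only of order $|x|^{-\alpha}$, so one would have to iterate over dyadic shells — work that is already absorbed into the proof of Theorem \ref{th:char}.
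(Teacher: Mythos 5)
Your proposal is correct and takes essentially the same route as the paper: both derive compactness from Lemma \ref{lm:compact} and then verify condition (iii) of Theorem \ref{th:char} by exploiting that on the event $\{t<\tau_{\overline{B}(0,r)^c}\}$ the process stays in $\{|y|>r\}$, so $e_q(t)\le\exp(-t\inf_{|y|>r}q(y))$, and the hypothesis makes this decay faster than $(1+r)^{-d-\alpha}$. The only cosmetic difference is that the paper phrases the bound via $\lambda(r)=\inf_{y\in\overline{B}(0,r)^c}q(y)$ while you bound $q(X_s)$ pointwise; the estimates and constants are equivalent.
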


A neccesary condition for IU can be stated as follows. 

\begin{theorem}
\label{th:nec}
Let $q \in L^\infty_{\loc}$, $q \geq 0$ and $q(x) \rightarrow \infty$ as $|x| \rightarrow \infty$. If the semigroup $(T_t)$ is intrinsically ultracontractive, then for any $\epsilon \in (0,1]$ we have $\lim_{|x| \rightarrow \infty}\frac{\sup_{y \in B(x,\epsilon)} q(y)}{\log |x|} = \infty$.
\end{theorem}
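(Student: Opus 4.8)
The plan is to extract the growth of $q$ directly from the equivalent description of intrinsic ultracontractivity furnished by condition (iv) of Theorem~\ref{th:char}. Suppose $(T_t)$ is IU. First, since $T_t\chi_{\R^d}(x)=\ex^x[e_q(t)]$, Theorem~\ref{th:char}(iv) supplies, for each fixed $t>0$, a constant $C_{q,t}$ such that
\begin{equation*}
\ex^x[e_q(t)]\le C_{q,t}(1+|x|)^{-d-\alpha},\qquad x\in\R^d.
\end{equation*}

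Next I would bound $\ex^x[e_q(t)]$ from below in terms of $\sup_{y\in B(x,\epsilon)}q(y)$. The key point is that on the event $\{t<\tau_{B(x,\epsilon)}\}$ the path $(X_s)_{s\le t}$ never leaves $B(x,\epsilon)$, so $\int_0^t q(X_s)\,ds\le t\sup_{y\in B(x,\epsilon)}q(y)$ there, and hence
\begin{equation*}
\ex^x[e_q(t)]\ge\ex^x\bigl[t<\tau_{B(x,\epsilon)};\,e_q(t)\bigr]\ge\exp\!\Bigl(-t\sup_{y\in B(x,\epsilon)}q(y)\Bigr)\,\pr^x\bigl(t<\tau_{B(x,\epsilon)}\bigr).
\end{equation*}
By spatial homogeneity $\pr^x(t<\tau_{B(x,\epsilon)})=\pr^0(t<\tau_{B(0,\epsilon)})=:c(\epsilon,t)$, a constant depending only on $\epsilon$, $t$ (and $d$, $\alpha$) which is strictly positive; in particular it does not depend on $x$.

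Putting the two inequalities together, for $|x|\ge1$,
\begin{equation*}
t\sup_{y\in B(x,\epsilon)}q(y)\ge(d+\alpha)\log(1+|x|)+\log c(\epsilon,t)-\log C_{q,t},
\end{equation*}
and dividing by $t\log|x|$ and letting $|x|\to\infty$ gives $\liminf_{|x|\to\infty}\sup_{y\in B(x,\epsilon)}q(y)/\log|x|\ge(d+\alpha)/t$. The left-hand side is independent of $t$, so letting $t\to0^+$ forces this $\liminf$ to equal $+\infty$, which is the assertion.

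I do not anticipate a real obstacle; once Theorem~\ref{th:char} is in hand the argument is short. The only mild subtlety is that the IU constant $C_{q,t}$ genuinely depends on $t$ and will blow up as $t\to0^+$; this is harmless precisely because one first lets $|x|\to\infty$ with $t$ fixed and only afterwards optimizes over $t$. The hypothesis $q(x)\to\infty$ enters solely through Lemma~\ref{lm:compact}, guaranteeing compactness of the $T_t$ so that Theorem~\ref{th:char} is available.
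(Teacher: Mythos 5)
Your proof is correct and takes essentially the same approach as the paper: you lower-bound the Feynman--Kac expectation by restricting to the event that the process stays in $B(x,\epsilon)$ up to time $t$, use an equivalent IU condition from Theorem~\ref{th:char} for the upper bound, take logarithms, and then let $t\to 0^+$. The only cosmetic difference is that you invoke condition (iv) of Theorem~\ref{th:char} directly, whereas the paper routes through condition (iii) via the inclusion $B(x,\epsilon)\subset\overline{B}(0,|x|/2)^c$.
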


The next theorem, arising from Theorem \ref{th:eig}, contains  more explicit estimates for the first eigenfunction.

\begin{theorem}
\label{th:eig1}
Let $q \in L^\infty_{\loc}$, $q \geq 0$ and $q(x) \rightarrow \infty$ as $|x| \rightarrow \infty$. Let $x \in \R^d$ and let $M_{q,x}\geq1$ be the constant such that
\begin{align*}
 M_{q,x}^{-1}(1+q(x)) & \le q(y) \le M_{q,x} (1+q(x))
  \, , & y \in B(x,1)
  \, .
\end{align*}
Then we have the following estimates
\begin{align}
\label{eq:eig1}
  \frac{C_{q,x}^{(1)}}{(1+q(x))(1+|x|)^{d+\alpha}}
  & \le
   \, \varphi_1(x)
   \le
    \frac{C_{q,x}^{(2)}}{(1+q(x))(1+|x|)^{d+\alpha}}
  \, ,
\end{align}
with constants $C_{q,x}^{(1)}=2^{-1}C_q^{(1)}M_{q,x}^{-1}\pr^0(\tau_{B(0,1)}>1)$ and $C_{q,x}^{(2)}=C_q^{(2)}M_{q,x}$, where $C_q^{(1)}, C_q^{(2)}$ are the constants from \eqref{eq:eig0}.
\end{theorem}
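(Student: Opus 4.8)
The plan is to read off Theorem~\ref{th:eig1} from Theorem~\ref{th:eig} by estimating $v_D(x)$ for $D=B(x,1)$ from above and below by constant multiples of $(1+q(x))^{-1}$, using only the hypothesis $M_{q,x}^{-1}(1+q(x))\le q(y)\le M_{q,x}(1+q(x))$ for $y\in B(x,1)$.

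For the upper bound: for every $t<\tau_D$ the path satisfies $X_s\in D$ for all $s\le t$, hence $\int_0^t q(X_s)\,ds\ge t\,M_{q,x}^{-1}(1+q(x))$ and $e_q(t)\le\exp(-t\,M_{q,x}^{-1}(1+q(x)))$. Therefore
\[
 v_D(x)=\ex^x\!\left[\int_0^{\tau_D}e_q(t)\,dt\right]\le\int_0^{\infty}\exp\!\left(-t\,M_{q,x}^{-1}(1+q(x))\right)dt=\frac{M_{q,x}}{1+q(x)}\,,
\]
and plugging this into the right inequality of \eqref{eq:eig0} gives the asserted upper bound with $C_{q,x}^{(2)}=C_q^{(2)}M_{q,x}$.

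For the lower bound I would discard the part of the expectation outside the event $\{\tau_D>1\}$ and shorten the integration to $[0,1]$:
\[
 v_D(x)\ge\ex^x\!\left[\tau_D>1;\ \int_0^1 e_q(t)\,dt\right]\,.
\]
On $\{\tau_D>1\}$ one has $X_s\in D$ for all $s\le1$, so $e_q(t)\ge\exp(-t\,M_{q,x}(1+q(x)))$ for $t\in[0,1]$; writing $b=M_{q,x}(1+q(x))$ and using spatial homogeneity of $X_t$ (so that $\pr^x(\tau_{B(x,1)}>1)=\pr^0(\tau_{B(0,1)}>1)$) yields
\[
 v_D(x)\ge\pr^0(\tau_{B(0,1)}>1)\int_0^1 e^{-bt}\,dt=\pr^0(\tau_{B(0,1)}>1)\,\frac{1-e^{-b}}{b}\,.
\]
Since $M_{q,x}\ge1$ and $q(x)\ge0$ we always have $b\ge1$, hence $1-e^{-b}\ge1-e^{-1}\ge\tfrac12$, so that $v_D(x)\ge\tfrac12 M_{q,x}^{-1}\pr^0(\tau_{B(0,1)}>1)(1+q(x))^{-1}$; inserting this into the left inequality of \eqref{eq:eig0} produces the lower bound with $C_{q,x}^{(1)}=2^{-1}C_q^{(1)}M_{q,x}^{-1}\pr^0(\tau_{B(0,1)}>1)$.

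There is no genuine obstacle here: once Theorem~\ref{th:eig} is available the argument is a short computation. The only two points requiring a little care are that the lower pointwise bound on $e_q(t)$ along the trajectory is only legitimate after restricting to $\{\tau_D>1\}$ (where the path is known to stay inside $B(x,1)$, so that $q(X_s)\le M_{q,x}(1+q(x))$ may be used), and the elementary observation that $b=M_{q,x}(1+q(x))\ge1$, which is exactly what yields the constant $\tfrac12$.
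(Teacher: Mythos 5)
Your proof is correct and follows essentially the same route as the paper: both arguments bound $v_D(x)$ above and below by constant multiples of $(1+q(x))^{-1}$ using the comparability of $q$ on $B(x,1)$, then invoke Theorem~\ref{th:eig}. The paper organizes the lower bound by first integrating $\exp(-M(1+q(x))t)$ over $[0,\tau_D]$ and then restricting to the event $\{\tau_D\ge 1\}$, whereas you restrict first and truncate the integral to $[0,1]$; this is cosmetic and yields the same constants.
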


A natural conclusion from the above theorem is the following  result for potentials $q$ comparable on unit balls.

\begin{corollary}
\label{cor:eig2}
Let $q \in L^\infty_{\loc}$, $q \geq 0$ and $q(x) \rightarrow \infty$ as $|x| \rightarrow \infty$. Let $M_q \geq1$ be a uniform constant such that 
\begin{align}
\label{eq:def2}
 M_q^{-1}(1+q(x)) & \le q(y) \le M_q (1+q(x))
  \, , & x \in \R^d, y \in B(x,1)
  \, .
\end{align}
Then, for all $x \in \R^d$, we have
\begin{align}
\label{eq:eig2}
  \frac{C_q^{(3)}}{(1+q(x))(1+|x|)^{d+\alpha}}
  & \le
   \, \varphi_1(x)
   \le
    \frac{C_q^{(4)}}{(1+q(x))(1+|x|)^{d+\alpha}}
  \, .
\end{align}
\end{corollary}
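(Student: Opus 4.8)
The plan is to deduce this directly from Theorem \ref{th:eig1}. Since $q(x) \to \infty$ as $|x|\to\infty$, Lemma \ref{lm:compact} gives that every $T_t$ is compact, so the first eigenfunction $\varphi_1$ is well defined, continuous, and strictly positive, and Theorem \ref{th:eig1} applies at every point $x \in \R^d$. The crucial (and essentially only) observation is that the uniform comparability condition \eqref{eq:def2} says precisely that $M_q$ is an admissible choice of local comparability constant at \emph{every} point $x$; consequently the pointwise constants $M_{q,x}$ appearing in Theorem \ref{th:eig1} satisfy $M_{q,x} \le M_q$ for all $x \in \R^d$.

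Substituting $M_{q,x} \le M_q$ into the explicit formulas $C_{q,x}^{(1)} = 2^{-1} C_q^{(1)} M_{q,x}^{-1} \pr^0(\tau_{B(0,1)}>1)$ and $C_{q,x}^{(2)} = C_q^{(2)} M_{q,x}$ yields the uniform bounds $C_{q,x}^{(1)} \ge 2^{-1} C_q^{(1)} M_q^{-1} \pr^0(\tau_{B(0,1)}>1)$ and $C_{q,x}^{(2)} \le C_q^{(2)} M_q$, neither of which depends on $x$. Setting $C_q^{(3)} := 2^{-1} C_q^{(1)} M_q^{-1} \pr^0(\tau_{B(0,1)}>1)$ and $C_q^{(4)} := C_q^{(2)} M_q$ and inserting these into \eqref{eq:eig1} gives \eqref{eq:eig2} for all $x \in \R^d$.

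Alternatively one can argue straight from Theorem \ref{th:eig}, bypassing Theorem \ref{th:eig1}, which clarifies why the bound is sharp: with $D = B(x,1)$, the lower bound $q \ge M_q^{-1}(1+q(x))$ on $D$ gives $e_q(t) \le e^{-tM_q^{-1}(1+q(x))}$ for $t<\tau_D$, hence $v_D(x) \le M_q(1+q(x))^{-1}$; conversely, the upper bound $q \le M_q(1+q(x))$ on $D$, together with the translation and scaling identity $\pr^x(\tau_{B(x,1)}>s) = \pr^0(\tau_{B(0,1)}>s)$ and the choice $s = (M_q(1+q(x)))^{-1} \le 1$, gives $v_D(x) \ge e^{-1} M_q^{-1}\pr^0(\tau_{B(0,1)}>1)(1+q(x))^{-1}$; combining these two with \eqref{eq:eig0} again produces \eqref{eq:eig2}. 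I do not expect a genuine obstacle in this corollary: it is a routine specialization. The only point that needs care is to verify that the constants emerging from Theorem \ref{th:eig1} (equivalently, from the elementary two-sided estimate of $v_D(x)$) are truly uniform in $x$, which is exactly the content of the hypothesis \eqref{eq:def2}.
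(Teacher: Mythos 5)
Your proof is correct and follows the paper's intended route exactly: the paper offers no separate argument for this corollary beyond the remark that it is ``a natural conclusion from the above theorem,'' i.e.\ one applies Theorem~\ref{th:eig1} at each $x$ with the uniform constant $M_q$ playing the role of $M_{q,x}$, and then the explicit formulas for $C_{q,x}^{(1)}, C_{q,x}^{(2)}$ become $x$-independent. Your alternative route through Theorem~\ref{th:eig} simply reproves the two-sided bound on $v_D(x)$ that is the content of Theorem~\ref{th:eig1}'s proof, so it is the same argument unwound; the small numerical discrepancy in the lower-bound constant ($e^{-1}$ versus the paper's $1-e^{-M}\ge 1/2$) is immaterial.
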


Examples of $q$ satisfying \eqref{eq:def2} are $q(x) = |x|^{\beta}$, $q(x) = \exp(\beta |x|)$, $\beta >0$ but not $q(x) = \exp(|x|^2)$. The following example shows that the assumption \eqref{eq:def2} in the Corollary \ref{cor:eig2} is essential.

\begin{example}
\label{ex:ex2}
Let $2^\alpha < a_1 < a_2 < a_3 < ... \rightarrow \infty$ be a sequence such that $\lim_{n \rightarrow \infty} \frac{a_{n+1}}{a_n}=\infty$. Set $r_n = \frac{1}{a_n^{1/\alpha}}$. Define:
\begin{equation*}
\label{def}
q(x) = 
\begin{cases}
a_1				 		                                                            &	\text{for} \quad |x| \leq r_1,                                 \\
a_n 																																			& \text{for} \quad n-1+r_n \leq |x| \leq n-r_{n+1}, n \geq 1,    \\	
\left(\frac{a_{n+1}-a_n}{2r_{n+1}}\right) (|x|-n+r_{n+1}) + a_n           &	\text{for} \quad n-r_{n+1} \leq |x| \leq n+r_{n+1}, n \geq 1.  \\		
\end{cases}
\end{equation*}
The potential $q$ is a nonnegative, locally bounded and continuous function such that $q(x) \rightarrow \infty$ as $|x| \rightarrow \infty$. However, the upper bound estimate in \eqref{eq:eig2} does not hold.
\end{example}

The justification of this example will be given in the last section. The justification is based on the estimates of the heat kernel for Dirichlet fractional Laplacian obtained by Z.-Q. Chen, P. Kim, R. Song in \cite[Theorem 1.1]{bib:CKS} and results of K. Bogdan, T. Grzywny \cite[Corollary 1]{bib:BG}.

The next corollary follows from Theorem \ref{th:suff} and Theorem \ref{th:nec} and gives the condition equivalent to IU for potentials comparable on unit balls.

\begin{corollary}
\label{cor:iuunit}
Let $q \in L^\infty_{\loc}$, $q \geq 0$ and $q(x) \rightarrow \infty$ as $|x| \rightarrow \infty$. If the condition \eqref{eq:def2} is satisfied, then the semigroup $(T_t)$ is intrinsically ultracontractive if and only if $\frac{q(x)}{\log |x|} \rightarrow \infty$ as $|x| \rightarrow \infty$.
\end{corollary}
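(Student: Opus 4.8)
The plan is to obtain the equivalence by simply combining the sufficient condition of Theorem~\ref{th:suff} with the necessary condition of Theorem~\ref{th:nec}, using the comparability hypothesis \eqref{eq:def2} to match the two. Note first that since $q(x) \to \infty$ as $|x| \to \infty$, Lemma~\ref{lm:compact} already guarantees that all $T_t$ are compact, so the notion of intrinsic ultracontractivity is meaningful under the stated hypotheses.

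For the implication ``$q(x)/\log|x| \to \infty$ as $|x|\to\infty$ implies IU'', I would just quote Theorem~\ref{th:suff}: under this growth hypothesis alone (no comparability needed) it gives both compactness of $T_t$ and intrinsic ultracontractivity of $(T_t)$. So this direction requires nothing further.

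For the converse, assume $(T_t)$ is IU. I would apply Theorem~\ref{th:nec} with $\epsilon = 1$, obtaining $\sup_{y \in B(x,1)} q(y)/\log|x| \to \infty$ as $|x| \to \infty$. Then I invoke the upper bound in \eqref{eq:def2}, namely $q(y) \le M_q(1+q(x))$ for all $y \in B(x,1)$, which gives $\sup_{y \in B(x,1)} q(y) \le M_q(1+q(x))$; dividing by $\log|x|$ and letting $|x|\to\infty$ yields $(1+q(x))/\log|x| \to \infty$. Finally, since $q(x) \to \infty$, one has $1+q(x) \le 2q(x)$ for $|x|$ large, so $q(x)/\log|x| \to \infty$, as claimed.

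There is essentially no obstacle here: the substance is already contained in Theorems~\ref{th:suff} and~\ref{th:nec}, and the only thing the corollary adds is the observation that \eqref{eq:def2} collapses the gap between ``$q$ grows faster than $\log$'' and ``$\sup_{B(x,1)} q$ grows faster than $\log$''. The only mildly delicate point is bookkeeping of the $+1$ terms (so that the comparison remains valid where $q$ is small), which one discards at the end using $q(x)\to\infty$.
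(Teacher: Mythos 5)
Your argument is correct and is exactly the route the paper (implicitly) intends: the forward implication is Theorem \ref{th:suff} verbatim, and the converse uses Theorem \ref{th:nec} with $\epsilon=1$ together with the upper bound in \eqref{eq:def2} to pass from $\sup_{B(x,1)} q$ to $q(x)$ itself, with a harmless adjustment for the $+1$ since $q(x)\to\infty$. Nothing further is needed.
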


The paper is organized as follows. In Preliminaries section we introduce notation and collect various facts which are needed in the sequel. In Section 3 we prove uniform  estimates of $q$-harmonic functions: Lemma \ref{lm:est}, Theorem \ref{th:bhi}, Corollary \ref{cor:bhisc} ("uniform" means not depending on the potential $q$). These results may be of independent interest. In section 4 we study conditions for compactness of $T_t$. Section 5 contains the proofs of the first eigenfunction estimates and the proofs of main theorems concerning intrinsic ultracontractivity. Proofs of more exact results for potentials comparable on unit balls are contained in the last section.

\section{Preliminaries}

Let $\alpha \in (0,2)$. For $x \in \R^d$ and a set $U \subset \R^d$, the symbols $|x|$, $|U|$ denote the Euclidean norm of $x$ and the $d$-dimensional Lebesque measure of the set $U$. By $B(x,r)$, $x \in \R^d$, $r >0$, we denote the standard Euclidean ball. The set $U^c$ is a complement of an arbitrary subset $U \subset \R^d$ and $\partial U$ denotes its boundary. For $x \in U$ let $\delta_U(x) = \dist(x,\partial U) = \inf\left\{|x-y|: y \in \partial U \right\}$. For a set $U$ and $r > 0$ we also define $rU = \left\{rx: x \in U\right\}$.

By $C_\kappa$ we always mean a strictly positive and finite constant depending on $\alpha$, $d$ and parameter $\kappa$ (we always omit dependence on $\alpha$ and $d$, and do not indicate it). We adapt the convention that constants may change their values from one use to the next. Sometimes we will write $C_\kappa^{(1)}, C_\kappa^{(2)}$ when we need to refer to concrete constants in the sequel.

Now we briefly introduce the needed properties of the process $X_t$ and some facts from its potential theory. The reader can find the wider introduction to the potential theory of stable processes in \cite{bib:Bo, bib:Ku1, bib:CS1}. $X_t$ is a standard rotation invariant $\alpha$-stable L\'evy process (i.e. homogenous, with independent increments) with L\'evy measure given by the density $\nu(x) = \cA |x|^{-d-\alpha}$, where $\cA=2^\alpha \pi^{-d/2} \Gamma((d+\alpha)/2)|\Gamma(-\alpha/2)|^{-1}$. By $\pr^x$ we denote the distribution of the process starting from $x \in \Rd$. For each fixed $t>0$ the transition density $p(t,y-x)$ of the process $X_t$ starting from $x \in \Rd$ is a continuous and bounded function on $\R^d \times \R^d$ satisfying the following estimates
\begin{align}
\label{eq:weaksc}
C^{-1} \min \left\{\frac{t}{|y-x|^{d+\alpha}}, t^{-d/\alpha} \right\} & \leq p(t,y-x) \leq C \min\left\{\frac{t}{|y-x|^{d+\alpha}}, t^{-d/\alpha} \right\}
\, , & x,y \in \R^d
\, .
\end{align}

We denote $P_t f(x) = \ex^x f(X_t) = \int_{\R^d} f(y) p(t,y-x) dy$. Using estimates \eqref{eq:weaksc}, we can simply show that operators $P_t:L^1(\R^d) \rightarrow L^\infty(\R^d)$, $P_t:L^1(\R^d) \rightarrow L^1(\R^d)$, $P_t:L^\infty(\R^d) \rightarrow L^\infty(\R^d)$ are bounded. These properties will be crucial in the proof of Lemma \ref{lm:kernel}.

By $p_D(t,x,y)$ we denote the transition density of the process killed on exiting an open set $D$. We have
\begin{align*}
p_D(t,x,y) & = p(t,y-x) - \ex^x[\tau_D \leq t; p(t-\tau_D,y-X_{\tau_D})]
 \, , & x, y \in D, t > 0
\, .
\end{align*}
We put $p_D(t,x,y) = 0$ whenever $x \notin D$ or $y \notin D$. It is clear that $\pr^x(\tau_D > t) = \int_D p_D(t,x,y)dy$. 

A function $F: \R^d \rightarrow \R$ is called $C^{1,1}$ if it has a first derivative $F'$ and there exists a constant $\delta$ such that for all 
$x, y \in \R^d$ we have $|F'(x)- F'(y)| \leq \delta |x - y|$. We say that a bounded open set $D \subset \R^d$ is a $C^{1,1}$ domain if for each
$x \in \partial D$ there are: a $C^{1,1}$ function $F_x: \R^{d-1} \rightarrow \R$ (with a constant $\delta = \delta(D)$), an orthonormal coordinate 
system $CS_x$, and a constant $\eta = \eta(D)$ such that if $y = (y_1, ..., y_d)$ in $CS_x$ coordinates, then
\begin{align*}
D \cap B(x,\eta) = \left\{y:y_d > F_x(y_1, ..., y_{d-1})\right\} \cap B(x,\eta)
\, .
\end{align*}
It was proved in \cite[Theorem 1.1]{bib:CKS} that for $C^{1,1}$ domain $D$, $t \in (0,1]$, $x, y \in D$, we have 
\begin{align*}
C^{-1}\left(1 \wedge \frac{\delta_D^{\frac{\alpha}{2}}(x)}{\sqrt{t}}\right)
      \left(1 \wedge \frac{\delta_D^{\frac{\alpha}{2}}(y)}{\sqrt{t}}\right) p(t,y-x) 
& \leq p_D(t,x,y) \\
& \leq C\left(1 \wedge \frac{\delta_D^{\frac{\alpha}{2}}(x)}{\sqrt{t}}\right)
       \left(1 \wedge \frac{\delta_D^{\frac{\alpha}{2}}(y)}{\sqrt{t}}\right) p(t,y-x)
\, .
\end{align*}
The upper bound for semibounded convex domains was shown earlier, in \cite[Theorem 1.6]{bib:Si}. 

The following lemma was obtained in \cite[Corollary 1]{bib:BG} as a straightforward corollary from the above estimates of $p_D(t,x,y)$. It only will be used in the justification of Example \ref{ex:ex2}. 

\begin{lemma}
If $D \subset \R^d$ is a $C^{1,1}$ domain, then there is a constant $C$ such that 
\begin{align}
\label{eq:surv}
C^{-1}\left(1 \wedge \frac{\delta_D^{\frac{\alpha}{2}}(x)}{\sqrt{t}}\right) & \leq \pr^x(\tau_D > t) \leq C\left(1 \wedge \frac{\delta_D^{\frac{\alpha}{2}}(x)}{\sqrt{t}}\right)
 \, , & t \in (0,1], x \in D
\, .
\end{align}
\end{lemma}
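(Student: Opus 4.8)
The plan is to read the bound off the two-sided heat-kernel estimate for $p_D$ on $C^{1,1}$ domains recalled just above, via the identity $\pr^x(\tau_D>t)=\int_D p_D(t,x,y)\,dy$, valid for $t\in(0,1]$. Since $1\wedge\frac{\delta_D^{\alpha/2}(y)}{\sqrt t}\le 1$, the upper estimate is immediate:
\[
\pr^x(\tau_D>t)\ \le\ C\Bigl(1\wedge\tfrac{\delta_D^{\alpha/2}(x)}{\sqrt t}\Bigr)\int_{\Rd}p(t,y-x)\,dy\ =\ C\Bigl(1\wedge\tfrac{\delta_D^{\alpha/2}(x)}{\sqrt t}\Bigr).
\]
Applying the same factorization to the lower heat-kernel bound gives $\pr^x(\tau_D>t)\ge C^{-1}\bigl(1\wedge\frac{\delta_D^{\alpha/2}(x)}{\sqrt t}\bigr)\,I(x,t)$, where $I(x,t)=\int_D\bigl(1\wedge\frac{\delta_D^{\alpha/2}(y)}{\sqrt t}\bigr)p(t,y-x)\,dy$, so it remains to prove that $I(x,t)\ge c_D>0$ uniformly in $x\in D$ and $t\in(0,1]$.

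To bound $I(x,t)$ below I would exhibit, for each pair $(x,t)$, a ball $B(z,\rho)\subset D$, with $\rho$ comparable to $\min(t^{1/\alpha},1)$, on which both $1\wedge\frac{\delta_D^{\alpha/2}(y)}{\sqrt t}$ and $p(t,y-x)$ are bounded below by positive constants, and integrate over it using the lower bound in \eqref{eq:weaksc}. There are three cases. If $\delta_D(x)\ge t^{1/\alpha}$, take $z=x$ and $\rho=t^{1/\alpha}/2$: then $B(x,\rho)\subset D$, $\delta_D(y)\ge t^{1/\alpha}/2$ on $B(x,\rho)$ so the weight is at least $2^{-\alpha/2}$, and $|y-x|\le t^{1/\alpha}$ gives $p(t,y-x)\ge C^{-1}t^{-d/\alpha}$; hence $I(x,t)\ge 2^{-\alpha/2}C^{-1}t^{-d/\alpha}|B(0,t^{1/\alpha}/2)|$, a positive constant independent of $x,t$. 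If $\delta_D(x)<t^{1/\alpha}$ and $t^{1/\alpha}\le R_0$, where $R_0=R_0(D)$ is the uniform interior-ball radius of the $C^{1,1}$ domain $D$, the interior-ball property yields a ball $B(z,\rho)\subset D$ with $\rho$ a fixed multiple of $t^{1/\alpha}$, with $\delta_D\ge\rho$ on $B(z,\rho)$ and $|z-x|<2t^{1/\alpha}$; then $|y-x|$ stays a fixed multiple of $t^{1/\alpha}$ on the ball, \eqref{eq:weaksc} again makes $p(t,y-x)$ of order $t^{-d/\alpha}$, and the same computation bounds $I(x,t)$ below. Finally, for $t\in(R_0^\alpha,1]$ and arbitrary $x\in D$ one single fixed ball $B(z_0,r_0)\subset D$ suffices: on $B(z_0,r_0/2)$ the weight is at least $1\wedge(r_0/2)^{\alpha/2}$ because $t\le 1$, and since $x,y$ range over the bounded set $D$ while $t\ge R_0^\alpha$, \eqref{eq:weaksc} bounds $p(t,y-x)$ below by a positive constant; so $I(x,t)$ is bounded below once more. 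Combining the three cases with the upper bound proves the lemma, the constant $C$ depending on $\alpha$, $d$, $\diam D$ and $R_0$.

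The argument is routine; the only point requiring care is the lower bound, where one must distinguish whether $x$ sits deep inside $D$ or near $\partial D$, and in the latter case invoke the uniform interior-ball property of $C^{1,1}$ domains to place a ball of radius comparable to $t^{1/\alpha}$ inside $D$ at distance $O(t^{1/\alpha})$ from $x$. This is exactly what forces $p(t,\cdot-x)$ to put a fixed fraction of its mass on a region where $\delta_D$ is of order $t^{1/\alpha}$, i.e. where the weight $1\wedge\frac{\delta_D^{\alpha/2}(\cdot)}{\sqrt t}$ stays bounded below.
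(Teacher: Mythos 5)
Your argument is correct, and it is exactly the route the paper points to: the statement is quoted as \cite[Corollary 1]{bib:BG} and described there as a direct consequence of the Chen--Kim--Song two-sided bound on $p_D$, which is what you integrate in $y$. The paper itself gives no proof, so your contribution is to fill in the ``straightforward corollary'' step; the case analysis for the lower bound (using the uniform interior-ball property of $C^{1,1}$ domains to place a ball of radius $\asymp t^{1/\alpha}$ where both the weight $1\wedge\delta_D^{\alpha/2}(y)/\sqrt t$ and $p(t,y-x)\asymp t^{-d/\alpha}$ are bounded below) is the standard way to do it and checks out.
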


The Green function of an open bounded set $D$ is defined by $G_D(x,y) = \int_0^\infty p_D(t,x,y)dt$. For nonnegative Borel function $f$ on $\R^d$ we have $\int_D G_D(x,y)f(y)dy = \ex^x\left[\int_0^{\tau_D} f(X_t) dt \right]$. In the sequel we will often use the following well known fact \cite{bib:G} $E^x(\tau_{B(0,r)}) = c (r^2 - |x|^2)^{\alpha/2}$, $r > 0$, $x \in B(0,r)$, $c = \Gamma(d/2)(2^{\alpha} \Gamma(1+\alpha/2)\Gamma((d+\alpha)/2))^{-1}$.

We now discuss properties of Feynman-Kac semigroups for Schr{\"o}dinger operators based on $-(-\Delta)^{\alpha/2}$. We refer the reader to \cite{bib:BB1, bib:BB2, bib:CS1} for more systematic treatment of Schr{\"o}dinger operators based on $-(-\Delta)^{\alpha/2}$.  

At first we prove the existence and basic properties of the kernel $u(t,x,y)$.

\begin{lemma}
\label{lm:kernel}
Let $q \in L^\infty_{\loc}$ and $q \geq 0$. We have:
\begin{itemize}
	\item[(i)]    $T_t f(x) \leq P_t f(x)$ for $f \geq 0$ on $\R^d$, $x \in \R^d$, $t>0$. 
	\item[(ii)]   For any $t > 0$, $T_t:L^\infty(\R^d) \rightarrow C_b(\R^d)$.
	\item[(iii)]  There exists a kernel $u(t,x,y)$ for $T_t$, i.e. $T_tf(x) = \int_{\R^d} u(t,x,y) f(y) dy$, $t>0$, $x \in \R^d$, $f \in L^p(\R^d) (1\leq                 p \leq \infty)$. For each fixed $t >0$, $u(t,x,y)$ is continuous and bounded on $\R^d \times \R^d$.
	\item[(iv)]   $u(t,x,y) = u(t,y,x)$, $t>0$, $x,y \in \R^d$.
	\item[(v)]    $0 < u(t,x,y) \leq p(t,y-x)$, $t > 0$, $x,y \in \R^d$.
\end{itemize}
\end{lemma}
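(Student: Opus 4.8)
\textbf{Proof proposal for Lemma \ref{lm:kernel}.}

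The plan is to build each part on the previous one, starting from the pointwise comparison with the free semigroup and bootstrapping through the mapping properties of $P_t$ recorded in Preliminaries. First, part (i) is immediate from the definition \eqref{def:FKS}: since $q \ge 0$ we have $0 \le e_q(t) \le 1$ pathwise, so for $f \ge 0$,
\[
T_t f(x) = \ex^x[e_q(t) f(X_t)] \le \ex^x[f(X_t)] = P_t f(x).
\]
For part (ii), given $f \in L^\infty$, write $f = f_+ - f_-$ and reduce to $f \ge 0$; then $0 \le T_t f \le P_t f = P_t \|f\|_\infty \cdot$ (up to constants) shows $T_t f$ is bounded, and for continuity I would use the Markov property to split $T_t f = T_{t/2}(T_{t/2} f)$ and combine dominated convergence along $x_n \to x$ with the continuity of $x \mapsto \ex^x[\,\cdot\,]$, using that $X$ has continuous transition densities; alternatively one represents $T_t f(x) = \ex^x[e_q(t/2) (T_{t/2}f)(X_{t/2})]$ and notes $T_{t/2}f$ is bounded and the map $x\mapsto P_{t/2}g(x)$ is continuous for bounded $g$. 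This is the standard Feynman–Kac argument as in \cite{bib:CZ}, and I would keep it terse.

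For part (iii), the key is to produce a kernel and prove its boundedness. I would use the semigroup property $T_t = T_{t/3} T_{t/3} T_{t/3}$ together with the chain of mapping properties: $P_t : L^1 \to L^\infty$ is bounded (from \eqref{eq:weaksc}, the density is bounded by $C t^{-d/\alpha}$), hence by domination so is $T_t : L^1 \to L^\infty$; likewise $T_t : L^1 \to L^1$ and $T_t: L^\infty \to L^\infty$ are bounded. Composing, $T_t$ maps $L^1 \to L^\infty$ with a bounded operator norm, and a standard duality/Dunford–Pettis argument gives a kernel $u(t,x,y) \in L^\infty(\Rd\times\Rd)$ with $\int u(t,x,y) f(y)\,dy = T_t f(x)$ for $f \in L^1$. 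To upgrade to all $L^p$ and to continuity, I would again factor $t$ and write $u(t,x,y)$ explicitly as a convolution-type integral $\int u(t/2,x,z) u(t/2,z,y)\,dz$ — or better, use the Feynman–Kac representation $u(t,x,y) = \ex^x_y[e_q(t)] p(t,y-x)$ via the bridge measure, but to avoid bridge technicalities I would instead get continuity of $u(t,\cdot,\cdot)$ from the identity $u(t,x,y) = \int_{\Rd} u(t/2,x,z)\, u(t/2,z,y)\, dz$ combined with the bound $u(t/2,x,z) \le p(t/2,z-x)$ and dominated convergence.

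Part (iv), symmetry, follows because $(T_t)$ is a symmetric semigroup on $L^2$: $\langle T_t f, g\rangle = \langle f, T_t g\rangle$ for all $f,g \in L^2$ — which in turn comes from the symmetry of $p(t,\cdot)$ and the fact that $e_q$ depends symmetrically on the path under time-reversal of the stationary bridge, or simply from self-adjointness of the Feynman–Kac semigroup built from a real potential and a symmetric process — so $u(t,x,y) = u(t,y,x)$ a.e., and then everywhere by the continuity from (iii). Part (v) combines (i), which at the kernel level gives $u(t,x,y) \le p(t,y-x)$ (test against $f \ge 0$ and use continuity), with strict positivity: $u(t,x,y) > 0$ because $e_q(t) > 0$ pathwise (as $q$ is locally bounded, $\int_0^t q(X_s)\,ds < \infty$ a.s.) and $p(t,\cdot) > 0$ everywhere, so the Feynman–Kac expectation over the non-degenerate bridge is strictly positive; concretely one can lower-bound $u(t,x,y) \ge \ex^x[e_q(t); X_{t}\in dy, X \text{ stays in a large ball}]$ and use that on a bounded time interval with paths confined to a ball, $q$ is bounded, so $e_q(t)$ is bounded below by a positive constant. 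The main obstacle is the rigorous construction of the kernel and its continuity in part (iii); everything else is routine once that is in hand, and I would lean on the $L^1 \to L^\infty$ smoothing of $P_t$ plus domination to keep it short.
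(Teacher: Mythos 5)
Your handling of parts (i), (iv), and (v) follows the same lines as the paper and is essentially fine, but there is a genuine gap in your argument for (ii), and it propagates into (iii).

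For (ii), your proposal for continuity is either circular or does not apply under the hypotheses. The representation $T_t f(x) = \ex^x[e_q(t/2) (T_{t/2}f)(X_{t/2})]$ is $T_{t/2}$ applied to the bounded function $T_{t/2}f$, not $P_{t/2}$ applied to it; the Feynman--Kac weight $e_q(t/2)$ is still present, so invoking ``continuity of $x\mapsto P_{t/2}g(x)$ for bounded $g$'' does not settle continuity of $T_{t/2}(T_{t/2}f)$ --- that is precisely the assertion $T_{t/2}:L^\infty\to C_b$ you are trying to establish. Your other suggestion, dominated convergence along $x_n\to x$, also fails as stated: writing $T_tf(x_n)=\ex^0[\exp(-\int_0^t q(x_n+X_s)\,ds)f(x_n+X_t)]$, the integrand does not converge pathwise since $q$ is merely locally bounded and measurable (not continuous) and $f$ is merely in $L^\infty$. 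The ``standard Feynman--Kac argument as in \cite{bib:CZ}'' that you cite is proved there for Kato-class potentials; an unbounded $q\in L^\infty_{\loc}$ (e.g.\ $q(x)=|x|^\beta$) is generally \emph{not} in the Kato class, so that result cannot be applied directly. The missing idea is the truncation used in the paper: set $q_n=\chi_{B(0,n)}q$, note $q_n\in\mathcal{J}^\alpha$ so that $T_{t,n}:L^\infty\to C_b$ by the Kato-class theory, and then control $|T_tf-T_{t,n}f|\le\|f\|_\infty\,\pr^x(\tau_{B(0,n)}<t)$, which tends to $0$ uniformly on compacts, so the limit $T_tf$ is continuous.

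The same issue surfaces in your argument for joint continuity of $u(t,\cdot,\cdot)$ in (iii): to pass to the limit in $\int u(t/2,x,z)\,u(t/2,z,y)\,dz$ by dominated convergence you need the integrand to be continuous in $(x,y)$ for a.e.\ $z$, but the Dunford--Pettis kernel is a priori only measurable. The paper first upgrades $y\mapsto u(t,x,y)$ to a continuous version by applying (ii) to $T_{t/2}f_{t/2,x}$ where $f_{t/2,x}(\cdot)=u(t/2,x,\cdot)$, then uses a.e.\ symmetry together with a three-fold rather than two-fold Chapman--Kolmogorov decomposition, so that both parameters sit in the slot where continuity is already known. Once (ii) is repaired along the paper's lines, your sketch of (iii) can be made to work with this extra care, and the remaining parts are routine as you describe.
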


The proof of this lemma is standard and is based on \cite[Section 3.2]{bib:CZ}. Similar arguments may be found in \cite[proof of Lemma 3.1]{bib:KS}. We repeat these arguments for the convenience of the reader. 

\begin{proof}
The property (i) is clear from definition of $T_t$ and our assumption that $q \geq 0$. 

For the proof of (ii) we put $q_n(x) = \chi_{B(0,n)}(x)q(x)$, $x \in \R^d$, $n=1,2,...$. By our assumption that $q \in L^\infty_{\loc}$, we have $q_n \in \cJ^\alpha$, $n=1,2,...$. $\cJ^\alpha$ is the Kato class, its definition may be found e.g. in (2.5) in \cite{bib:BB2}. For any $n$ we put $T_{t,n}f(x) = \ex^x[e_{q_n}(t)f(X_t)]$, $t>0$, $x \in \R^d$. By continuity and boundedness on $\R^d \times \R^d$ (for fixed $t>0$) of the density $p(t,y-x)$, we get $P_t:L^\infty(\R^d)\rightarrow C_b(\R^d)$. By this, formula (2.10) in \cite{bib:BB2} and the same argument as in the proofs of \cite[Propositions 3.11 and 3.12]{bib:CZ}, we also obtain that $T_{t,n}:L^\infty(\R^d)\rightarrow C_b(\R^d)$ for any $n=1,2...$. Furthermore,
\begin{align*}
|T_tf(x) - T_{t,n}f(x)| = |\ex^x[(e_q(t)-e_{q_n}(t))f(X_t)]| \leq \left\|f\right\|_\infty \pr^x(\tau_{B(0,n)} < t)
\, .
\end{align*}
Since for each fixed $t>0$ we have $\pr^x(\tau_{B(0,n)} < t) \rightarrow 0$ as $n \rightarrow \infty$, this implies (ii). 

Now we justify the properties (iii)-(v). From (i) and properties of $P_t$ we obtain that the operators $T_t:L^1(\R^d) \rightarrow L^\infty(\R^d)$ and $T_t:L^1(\R^d) \rightarrow L^1(\R^d)$ are bounded. By this and theorem of Dunford and Pettis \cite[Theorem A.1.1, Corollary A.1.2]{bib:S}(see also \cite{bib:CZ}), for each $t>0$, there exists a measurable on $\R^d \times \R^d$ kernel $u(t,x,y)$, $x,y \in \R^d$, for $T_t$, that is 
\begin{align*}
T_tf(x) & = \int_{\R^d} u(t,x,y)f(y)dy
\, , & f \in L^1(\R^d), t>0, x \in \R^d
\, . 
\end{align*} 
By (i) and properties of $P_t$, this representation also holds for all $f \in L^p(\R^d)$, $1 \leq p \leq \infty$.  The properties (i), definition of $T_t$ and the fact that $q \in L^\infty_{\loc}$ give that for each fixed $t>0$ and $x \in \R^d$ we have $0 < u(t,x,y) \leq p(t,y-x)$ for almost all $y \in \R^d$. We may and do assume that these inequalities also hold for all $y \in \R^d$. This gives (v). 

The standard arguments \cite[pages 75-76]{bib:CZ} implies that $T_t$ is symmetric, so for each fixed $t>0$ the property (iv) holds for almost all $(x,y)$ with respect to the Lebesque measure on $\Rd \times \Rd$. 

Let $f_{t,x}(y) = u(t,x,y)$. Fix $t>0$, $x_0, y_0 \in \R^d$, $r > 0$. From (iv) (for almost all $(x,y) \in \R^d \times \R^d$) and the semigroup property we have 
\begin{align*}
\int_{B(y_0,r)} u(t,x_0,y) dy = \int_{B(y_0,r)} T_{\frac{t}{2}} f_{\frac{t}{2}, x_0}(y) dy
\, .
\end{align*}
Since $f_{\frac{t}{2}, x_0} \in L^\infty(\R^d)$, (ii) gives that $T_{\frac{t}{2}} f_{\frac{t}{2}, x_0} \in C_b(\R^d)$. Therefore we may and do assume that for each fixed $t>0$ and $x \in \R^d$, $u(t,x,y)$ is continuous as a function of $y$. Fixed $t>0$. For any $x, y \in \R^d$ we have
\begin{align*}
u(t,x,y) = \int_{\R^d} \int_{\R^d} u(t/3,x,z)u(t/3,z,w)u(t/3,w,y) dw dz
\, . 
\end{align*}
For any fixed $z, w \in \R^d$, $u(t/3,z,x) \rightarrow u(t/3,z,x_0)$ and $u(t/3,w,y) \rightarrow u(t/3, w,y_0)$ as $x \rightarrow x_0$ and $y \rightarrow y_0$. By the dominated convergence theorem we get (iii). This also completes (iv) for all $x,y \in \R^d$, $t>0$. 
\end{proof} 
 
The potential operator for $(T_t)$ is defined as follows
\begin{align*}
V f(x) & = \int_0^\infty T_t f(x) dt = \ex^x \left[\int_0^\infty e_q(t) f(X_t) dt \right]
\, ,
\end{align*}
for a nonnegative Borel function $f$ on $\R^d$. 

\begin{lemma}
\label{lm:bounded}
Let $q \in L_{\loc}^{\infty}$, $q \ge 0$. If $q(x) \rightarrow \infty$ as $|x| \rightarrow \infty$, then $\left\|V \chi_{\R^d}\right\|_\infty < \infty$. 
\end{lemma}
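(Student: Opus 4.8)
The plan is to derive the bound from two simple observations: that $V$ applied to the potential $q$ itself is dominated by the constant $1$, and that $V$ reproduces the first eigenfunction up to the factor $\lambda_1^{-1}$. By Lemma~\ref{lm:compact} the operators $T_t$ are compact, so (as recalled right after that lemma) there is a strictly positive, bounded, continuous $\varphi_1$ with $T_t\varphi_1=e^{-\lambda_1 t}\varphi_1$ and $\lambda_1>0$; note that this uses only the compactness furnished by Lemma~\ref{lm:compact} and nothing from the present lemma, so there is no circularity. Since $T_t$ maps $L^\infty(\R^d)$ into $C_b(\R^d)$ by Lemma~\ref{lm:kernel}(ii), the eigenrelation holds pointwise, and integrating in $t$ gives $V\varphi_1=\lambda_1^{-1}\varphi_1\le \lambda_1^{-1}\|\varphi_1\|_\infty$ on $\R^d$.

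Next I would verify that $Vq\le 1$ everywhere. Along a fixed trajectory, $t\mapsto\int_0^t q(X_s)\,ds$ is Lipschitz on every compact interval, because a c\`adl\`ag path has bounded range there and $q\in L^\infty_{\loc}$; hence $e_q(t)=\exp(-\int_0^t q(X_s)\,ds)$ is locally absolutely continuous with $\frac{d}{dt}e_q(t)=-q(X_t)e_q(t)$ for a.e.\ $t$, and $e_q$ is nonincreasing, so $e_q(\infty):=\lim_{t\to\infty}e_q(t)$ exists in $[0,1]$. Consequently $\int_0^\infty q(X_t)e_q(t)\,dt=1-e_q(\infty)\le 1$ pathwise, and taking $\ex^x$ (Tonelli, the integrand being nonnegative) yields $Vq(x)\le 1$ for every $x\in\R^d$.

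Finally I would combine the two estimates. Since $q(x)\to\infty$, fix $R>0$ with $q\ge 1$ on $B(0,R)^c$, and set $c_R:=\inf_{\overline{B}(0,R)}\varphi_1$, which is strictly positive because $\varphi_1$ is continuous and strictly positive on the compact set $\overline{B}(0,R)$. Then pointwise on $\R^d$ one has $\chi_{\R^d}=\chi_{B(0,R)}+\chi_{B(0,R)^c}\le c_R^{-1}\varphi_1+q$, since on $B(0,R)$ the first summand dominates $\chi_{B(0,R)}$ and on $B(0,R)^c$ the second dominates $\chi_{B(0,R)^c}$. As $V$ is order-preserving and additive on nonnegative functions (immediate from its definition), $\|V\chi_{\R^d}\|_\infty\le c_R^{-1}\|V\varphi_1\|_\infty+\|Vq\|_\infty\le c_R^{-1}\lambda_1^{-1}\|\varphi_1\|_\infty+1<\infty$. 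The substantive point is really the identity $Vq\le 1$ together with the reproducing property $V\varphi_1=\lambda_1^{-1}\varphi_1$; the genuine obstacle one must be careful about is only that $\varphi_1$ decays at infinity, so it cannot by itself control $\chi_{\R^d}$ — this is precisely why one splits off the region $\{q<1\}\subset B(0,R)$ and handles the complement directly through $q$.
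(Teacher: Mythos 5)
Your proof is correct, and it takes a genuinely different route from the paper's. The paper proves this lemma directly in the Preliminaries, before any spectral theory is developed: it truncates to $f_N(x)=\ex^x[\int_0^N e_q(t)\,dt]$, splits the path at the exit times of $B(0,2R)$ and of $\overline B(0,R)^c$, and uses the strong Markov property together with the contraction $\ex^0[e^{-\tau_{B(0,1)}}]<1$ to obtain a bound on $\sup_{B(0,2R)} f_N$ that is uniform in $N$; that self-improving inequality is the whole engine. Your argument instead reaches forward to the eigenfunction $\varphi_1$: it rests on the reproducing relation $V\varphi_1=\lambda_1^{-1}\varphi_1$ and the pathwise identity $\int_0^\infty q(X_t)e_q(t)\,dt=1-e_q(\infty)\le 1$, and then the pointwise domination $\chi_{\R^d}\le c_R^{-1}\varphi_1+q$. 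Both are valid; yours is shorter and conceptually transparent, but it depends on the compactness of $T_t$ (Lemma~\ref{lm:compact}, proved in Section~4 via Lemma~\ref{lm:cmpct1}) and the standard spectral facts that follow, so it requires reordering the paper's material. You correctly checked that this creates no circularity: the proofs of Lemma~\ref{lm:cmpct1} and Lemma~\ref{lm:compact} use only \eqref{eq:weaksc}, Lemma~\ref{lm:kernel} and the semigroup property, not Lemma~\ref{lm:bounded}, and the boundedness/continuity/strict positivity of $\varphi_1$ follow from compactness and $u(t,x,y)\le p(t,y-x)$, $u>0$, again independently of the present lemma. One genuine payoff of using $\varphi_1$ rather than trying to bound $V\chi_{B(0,R)}$ directly is that the latter would fail when $d\le\alpha$ (the free process is recurrent, so there is no Green function); dominating $\chi_{B(0,R)}$ by $c_R^{-1}\varphi_1$ sidesteps this. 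The paper's direct argument has the opposite virtue: it is self-contained and respects the logical order in which the paper builds up to $\varphi_1$, which is why the authors chose it.
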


\begin{proof}
Since $q(x) \rightarrow \infty$ as $|x| \rightarrow \infty$, there exists $R > 1$ such that $q(x) \geq 1$ for $|x| \geq R$. Denote: $A=\overline{B}(0,R)^c$, $B=B(0,2R)$. For any $0 < N < \infty$ let $f_N(x) = \ex^x\left[\int_0^N e_q(t) dt \right]$. Let $x \in B$. We have
\begin{align*}
f_N(x) & = \ex^x\left[\tau_B \geq N; \int_0^N e_q(t) dt \right] + \ex^x\left[\tau_B < N; \int_0^N e_q(t) dt \right] \\
       & = \ex^x\left[\tau_B \geq N; \int_0^N e_q(t) dt \right] + \ex^x\left[\tau_B < N; \int_0^{\tau_B} e_q(t) dt \right] 
         + \ex^x\left[\tau_B < N; \int_{\tau_B}^N e_q(t) dt \right] \\
       & \leq 2 \ex^x\left[\int_0^{\tau_B} e_q(t) dt \right] + \ex^x\left[\tau_B < N; \int_{\tau_B}^N e_q(t) dt \right] \\
       & \leq 2 \ex^x \tau_B + \ex^x\left[\tau_B < N; \int_{\tau_B}^N e_q(t) dt \right] \leq C R^\alpha 
         + \ex^x\left[\tau_B < N; \int_{\tau_B}^N e_q(t) dt \right]
\, .
\end{align*}
It is enough to estimate the last expected value. By a change of variables and the strong Markov property, we obtain
\begin{align*}
\ex^x & \left[\tau_B < N;  \int_{\tau_B}^N e_q(t) dt \right]  = \ex^x\left[\tau_B < N; e^{-\int_0^{\tau_B} q(X_s) ds }\int_{\tau_B}^N                
                                                                 e^{-\int_{\tau_B}^t q(X_s) ds }dt \right] \\
                                 & \leq \ex^x\left[\tau_B < N; \int_{\tau_B}^N e^{-\int_{\tau_B}^t q(X_s) ds }dt \right] 
                                   = \ex^x\left[\tau_B < N; \int_0^{N-\tau_B} e^{-\int_{\tau_B}^{t+\tau_B} q(X_s) ds }dt\right]\\ 
                                 & \leq \ex^x\left[\tau_B < N; \int_0^N e^{-\int_{\tau_B}^{t+\tau_B} q(X_s) ds }dt\right] 
                                   \leq \ex^x\left[\tau_B < N; \ex^{X_{\tau_B}} \left[\int_0^N e^{-\int_0^t q(X_s) ds }dt\right]\right]
\, .
\end{align*}
Thus 
\begin{align}
\label{eq:fN}
f_N(x) & \leq C_R + \ex^x f_N(X_{\tau_B})
\, , & x \in B
\, .
\end{align}

Let now $x \in B^c$. Observe that $B(x,1) \subset A$. Recalling that $q \geq 1$ on $A$, similarly as before, we have
\begin{align*}
f_N(x) & = \ex^x\left[\tau_A \geq N; \int_0^N e_q(t) dt \right] + \ex^x\left[\tau_A < N; \int_0^N e_q(t) dt \right]          \\
       & \leq 2 \ex^x\left[\int_0^{\tau_A}e^{-\int_0^tq(X_s)ds}dt\right] 
         +      \ex^x\left[\tau_A < N;e_q(\tau_A)\ex^{X_{\tau_A}}\left[\int_0^N e_q(t)dt\right]\right] \\
       & \leq 2\ex^x[\int_0^{\tau_A} e^{-t} dt] + \ex^x\left[\tau_A < N; e^{-\tau_A}\ex^{X_{\tau_A}}\left[\int_0^N e_q(t)dt\right]\right] \\
       & \leq 2 + \ex^x\left[\tau_A < N; e^{-\tau_{B(x,1)}}\ex^{X_{\tau_A}}\left[\int_0^N e_q(t)dt\right]\right] \\
       & \leq 2 + \sup_{x \in B} f_N(x) \ex^0[e^{-\tau_{B(0,1)}}]
\, .
\end{align*}
Using this and \eqref{eq:fN} we get $\sup_{x \in B} f_N(x) \leq C_R + 2 + \sup_{x \in B} f_N(x) \ex^0[e^{-\tau_{B(0,1)}}]$. Since $\ex^0[e^{-\tau_{B(0,1)}}] = C <1$, we obtain that $\sup_{x \in B} f_N(x) \leq \frac{C_R + 2}{1-C}$. Recalling that for $x \in B^c$ we have $f_N(x) \leq 2 + \sup_{x \in B} f_N(x) \ex^0[e^{-\tau_{B(0,1)}}]$, we conclude that $f_N$ is bounded everywhere and uniformly in relation to $N$, which finishes the proof. 
\end{proof}

Under the assumptions $q \in L_{\loc}^{\infty}$, $q \ge 0$, $\lim_{|x| \to \infty} q(x) = \infty$, by Lemma \ref{lm:bounded} and standard arguments \cite[Theorem 3.18]{bib:CZ}, we obtain that the operator $V$ has a symmetric kernel given by $V(x,y)=\int_0^\infty u(t,x,y) dt$, that is $V f(x)= \int_{\R^d} V(x,y)f(y)dy$.

The $q$-Green operator for an open set $D$ is defined by the formula
\begin{align*}
V_D f(x) & = \ex^x \left[\int_0^{\tau_D} e_q(t) f(X_t) dt \right]
\, ,
\end{align*}
for a nonnegative Borel function $f$ on $D$. Observe that $V_D \chi_{\R^d} (x) = v_D(x)$. Additionally, if $D^{'}$ is an open set such that $D \subset D^{'} \subseteq \R^d$ and $f$ is a nonnegative Borel function on $D^{'}$, then by the strong Markov property, we have 
\begin{equation}
\label{eq:pot1}
\begin{split}
V_{D^{'}}f(x) & = \ex^x \left[\int_0^{\tau_D} e_q(t) f(X_t) dt \right] + \ex^x \left[\int_{\tau_D}^{\tau_{D^{'}}} e_q(t) f(X_t) dt \right] \\
              & = V_D f(x) + \ex^x \left[e^{-\int_0^{\tau_D} q(X_s) ds} \int_{\tau_D}^{\tau_{D^{'}}} e^{-\int_{\tau_D}^t q(X_s) ds} f(X_t) dt \right] \\
              & = V_D f(x) + \ex^x \left[e_q(\tau_D) \ex^{X_{\tau_D}}\left[ \int_0^{\tau_{D^{'}}} e_q(t) f(X_t) dt \right]\right] \\
              & = V_D f(x) + \ex^x[e_q(\tau_D)V_{D^{'}}f(X_{\tau_D})], \ \ \ x \in D.
\end{split}
\end{equation}
We will use (\ref{eq:pot1}) to obtain the following property of $\varphi_1$. Under the assumptions $q \in L_{\loc}^{\infty}$, $q \ge 0$, $\lim_{|x| \to \infty} q(x) = \infty$ we have $T_t \varphi_1 = e^{-\lambda_1 t} \varphi_1$ which implies $\varphi_1(x) = \lambda_1 V \varphi_1(x)$. Now (\ref{eq:pot1}) applied for $f = \varphi_1$, $D' = \Rd$ and an open set $D \subset \Rd$ gives
\begin{align}
\label{eq:pot3}
\varphi_1(x) & = \lambda_1 V_D \varphi_1(x) + \ex^x[e_q(\tau_D) \varphi_1(X_{\tau_D})]
\, , & x \in D
\, .
\end{align}

If $D \subseteq \R^d$ is a regular (say bounded Lipschitz) open set, then similarly as before, the operator $V_D$ is given by symmetric kernel $V_D(x,y)$, that is, $V_D f(x) = \int_{D} V_D(x,y)f(y)dy$ (see \cite[page 58]{bib:BB1}). The function $V_D(x,y)$ is called the {\it{$q$-Green function}} of $D$ and since $q \geq 0$, it is clear that in our case $V_D(x,y) \leq G_D(x,y)$.

We say that Borel function $f$ on $\R^d$ is $q$-harmonic in an open set $D \subset \R^d$ if
\begin{align}
\label{def:harm}
f(x) & = \ex^x\left[e_q(\tau_U) f(X_{\tau_U})\right]
\, , & x \in U
\, ,
\end{align}
for every bounded open set $U$ with $\overline{U}$ contained in $D$. It is called regular $q$-harmonic in $D$ if \eqref{def:harm} holds for $U=D$. It is well known \cite{bib:BB1}, page 83, that every function regular $q$-harmonic in $D$ is $q$-harmonic in $D$. If $D$ is unbounded, then by the usual convention we understand that in (\ref{def:harm}) $\ex^x\left[e_q(\tau_D) f(X_{\tau_D})\right] = \ex^x\left[\tau_D < \infty; e_q(\tau_D) f(X_{\tau_D})\right]$.
The Borel function $f$ on $\Rd$ is said to be $q$-superharmonic in an open set $D \subset \R^d$ if 
\begin{align}
\label{def:superharm}
f(x) & \geq \ex^x\left[e_q(\tau_U) f(X_{\tau_U})\right]
\, , & x \in U
\, ,
\end{align}
for every bounded open set $U$ with $\overline{U}$ contained in $D$. We always understand that the expectation in (\ref{def:harm}) and (\ref{def:superharm}) is absolutely convergent.

For an open set $D \subset \R^d$ the {\it{gauge function}} is defined by $u_D(x) = \ex^x[e_q(\tau_D)]$, $x \in D$ (see e.g. \cite[page 58]{bib:BB1}, \cite{bib:CS1}, \cite{bib:CZ}). When it is bounded in $D$, we say that $(D,q)$ is gaugeable. If $D$ is a bounded domain with the exterior cone property, then the condition $q \geq 0$ gives that $(D,q)$ is gaugeable and for $f \geq 0$ we have
\begin{align}
\label{eq:IWF}
\ex^x[e_q(\tau_D)f(X_{\tau_D})] & = \cA \int_D V_D(x,y) \int_{D^c} \frac{f(z)}{|z-y|^{d+\alpha}} dz dy
\, , & x \in D
\, 
\end{align}
by \cite[formula (17) of Section 2 and Theorem 4.10]{bib:BB1}.

The following estimate will be very useful in the proof of Lemma \ref{lm:cruc}. It follows from \cite[Lemma 4]{bib:Kw} for $\gamma>0$; for $\gamma = 0$ it is trivial. For any $\gamma \geq 0 $, $\gamma \neq d$, 
\begin{align}
\label{eq:cruest}
 \int_{B(x,|x|/4)^c}(1+|y|)^{-\gamma}|x-y|^{-d-\alpha}dy & \le C_{\gamma}|x|^{-\gamma^{'}}
  \, , & |x| \geq 1
  \, ,
\end{align}
where $\gamma^{'}=\min(\gamma + \alpha, d + \alpha)$.

The next lemma gives an important estimate which will be needed in the proofs of Theorem \ref{th:eig} and Theorem \ref{th:char}. The proof of Lemma \ref{lm:cruc} is similar to the proof of \cite[Theorem 1]{bib:Kw}.

\begin{lemma}
\label{lm:cruc}
Let $q \in L_{loc}^{\infty}$, $q \ge 0$ and $q(x) \rightarrow \infty$ as $|x| \rightarrow \infty$. Put $D=B(x,1)$. Let $f$ be a nonnegative and bounded function on $\R^d$ such that for any $|x| \geq 3$ we have
\begin{align*}
f(x) & \le  C^{(1)}_q v_D(x)\left(\sup_{y \in B\left(x,\frac{|x|}{2}\right)}f(y) 
          + \int_{B\left(x,\frac{|x|}{2}\right)^c} f(z)|z-x|^{-d-\alpha}dz \right)
\, .
\end{align*}
Then $f(x) \le C^{(2)}_q v_D(x)|x|^{-d-\alpha}$ for all $|x| \geq 3$.
\end{lemma}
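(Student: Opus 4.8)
The plan is to bootstrap a polynomial decay rate for $f$, raising the exponent by $\alpha$ at each step until it reaches $d+\alpha$, and only at the very end to reinstate the factor $v_D(x)$. Throughout write $D=B(x,1)$, $M:=\sup_{\R^d}f<\infty$, and $g(r):=\sup_{|w|\ge r}f(w)$ for $r\ge 3$, so $g$ is non-increasing and $g\le M$. Two elementary size estimates for $v_D(x)=v_{B(x,1)}(x)$ will be used repeatedly. Since $q\ge0$, for every $x$ we have $v_{B(x,1)}(x)\le\ex^x[\tau_{B(x,1)}]=\ex^0[\tau_{B(0,1)}]=:c_1$. On the other hand, set $\psi(r):=\inf_{|z|\ge r-1}q(z)$; this is non-decreasing and $\psi(r)\to\infty$ as $r\to\infty$ by the hypothesis on $q$, and since $B(y,1)\subset\{|z|\ge|y|-1\}$ we get $q(z)\ge\psi(|y|)$ on $B(y,1)$, hence $e_q(t)\le e^{-\psi(|y|)t}$ for $t<\tau_{B(y,1)}$, and therefore $v_{B(y,1)}(y)\le 1/\psi(|y|)$. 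Finally, because $q\in L^\infty_{\loc}$, for every $R$ there is $c_2(R)>0$ with $v_{B(x,1)}(x)\ge c_2(R)$ whenever $|x|\le R$.

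Suppose for the moment that $f(z)\le A(1+|z|)^{-\gamma}$ on $\R^d$, where $0\le\gamma\le d+\alpha$ and $\gamma\ne d$; put $\gamma':=\min(\gamma+\alpha,\,d+\alpha)$. Fix $x$ with $|x|\ge 6$. In the hypothesis of the lemma bound $\sup_{B(x,|x|/2)}f\le g(|x|/2)$; use $B(x,|x|/2)^c\subset B(x,|x|/4)^c$ together with the ansatz and \eqref{eq:cruest} to get $\int_{B(x,|x|/2)^c}f(z)|z-x|^{-d-\alpha}dz\le C_\gamma A\,|x|^{-\gamma'}$; and use $v_D(x)\le 1/\psi(|x|)$. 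Taking the supremum over $|x|\ge r$ (using the monotonicity of $g$, of $\psi$, and of $\rho\mapsto\rho^{-\gamma'}$) yields the recursion
\[
 g(r)\ \le\ \frac{C_q^{(1)}}{\psi(r)}\Bigl(g(r/2)+C_\gamma A\,r^{-\gamma'}\Bigr),\qquad r\ge 6.
\]

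Now iterate this recursion. Fix $\Lambda:=2^{2(d+\alpha)}$ and choose $R_q$ (depending only on $q,d,\alpha$) so large that $\psi(\rho)\ge\Lambda C_q^{(1)}$ for $\rho\ge R_q$. Let $r\ge 3R_q$ and let $m$ be the largest integer with $r/2^{m}\ge 6$; apply the recursion at the scales $r,r/2,\dots,r/2^{m-1}$, using at each scale exceeding $R_q$ the factor $C_q^{(1)}/\psi(\cdot)\le 1/\Lambda$, and at the boundedly many scales below $R_q$ the cruder bound $C_q^{(1)}c_1$, and bounding $g(r/2^{m})\le M$. Since $\log_2\Lambda=2(d+\alpha)$, the product of the order-$\log_2 r$ many "good" factors is $\le C_q\,r^{-2(d+\alpha)}$, while the series generated by the source terms $C_\gamma A\,(r/2^{k})^{-\gamma'}$ converges geometrically with the chosen $\Lambda$; altogether $g(r)\le C_q\,r^{-(d+\alpha)}+C_{q,\gamma}\,A\,r^{-\gamma'}$ for $r\ge 3R_q$. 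Inserting this, together with \eqref{eq:cruest}, back into the hypothesis of the lemma and using $v_D(x)\le c_1$ and $|x|^{-(d+\alpha)}\le|x|^{-\gamma'}$ gives $f(x)\le C_q^{(1)}c_1(C_q+C_{q,\gamma}'A)|x|^{-\gamma'}$ for $|x|\ge 6R_q$, which combined with $f\le M$ upgrades to $f(z)\le\widetilde A\,(1+|z|)^{-\gamma'}$ on $\R^d$ for a suitable finite $\widetilde A$. Starting from $\gamma=0$, $A=M$, and iterating this bootstrap $\gamma\mapsto\gamma'$ finitely many times — perturbing the intermediate exponents slightly if necessary to avoid the value $d$ excluded in \eqref{eq:cruest}, which is harmless since a smaller exponent only weakens the bound — we reach $f(z)\le A_*(1+|z|)^{-(d+\alpha)}$ on $\R^d$ with $A_*=A_*(q)<\infty$. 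A last application of the recursion with $\gamma=\gamma'=d+\alpha$ gives $g(r)\le C_q''\,r^{-(d+\alpha)}$ for $r\ge 3R_q$; feeding this and \eqref{eq:cruest} into the hypothesis — this time \emph{retaining} the factor $v_D(x)$ — yields $f(x)\le C_q^{(2)}v_D(x)|x|^{-d-\alpha}$ for $|x|\ge 6R_q$. For $3\le|x|\le 6R_q$ we use $v_D(x)\ge c_2(6R_q)>0$ and $f\le M$: enlarging $C_q^{(2)}$ so that $C_q^{(2)}c_2(6R_q)(6R_q)^{-d-\alpha}\ge M$ completes the proof.

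The one genuine difficulty is the term $\sup_{B(x,|x|/2)}f$ in the hypothesis: the ball $B(x,|x|/2)$ still reaches to within distance $|x|/2$ of the origin, so a polynomial decay bound for $f$ does not by itself make this term small compared with $|x|^{-\gamma}$, and the exponent cannot be improved at a single scale. The gain must be extracted by iterating the recursion for $g$ across the $\approx\log_2|x|$ dyadic scales down to a bounded radius: each scale contributes a factor $C_q^{(1)}/\psi(\cdot)$ which need not beat any fixed power of $|x|$ but, since $\psi\to\infty$, does beat every fixed constant, so the product over logarithmically many scales beats the prescribed power $d+\alpha$. Checking that the finitely many initial scales on which $\psi$ is not yet large contribute only a $q$-dependent constant, and that the series arising from the source terms sums with $\Lambda=2^{2(d+\alpha)}$, are the routine points.
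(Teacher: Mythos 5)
Your proof is correct and follows the same overall bootstrap as the paper's: start from the trivial bound $f\le M$, raise the decay exponent by $\alpha$ per pass using \eqref{eq:cruest}, perturb around $\gamma=d$, and reinstate the factor $v_D(x)$ only in the final pass. The difference lies in how each pass extracts the gain across scales. The paper multiplies through by $|x|^{\gamma'}$ and works with the increasing function $g(s)=\sup_{y\in B(0,s)}|y|^{\gamma'}f(y)$: since $C^{(1)}_{q,\gamma}v_D(x)\le 2^{-\gamma'-1}$ for $|x|$ large, it obtains $g(s)\le 2^{-\gamma'-1}g(2s)+C$ for $s\ge R$ and then rules out unboundedness of $g$ by contradiction, because the forced growth rate $2^{n(\gamma'+1)}$ would overtake the a priori bound $g(2^ns)\le C\,2^{n\gamma'}s^{\gamma'}$ coming from $f\le M$. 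You instead work with the decreasing tail supremum $g(r)=\sup_{|w|\ge r}f(w)$, derive the downward recursion $g(r)\le C_q^{(1)}\psi(r)^{-1}\bigl(g(r/2)+C_\gamma A\,r^{-\gamma'}\bigr)$, and iterate it explicitly over the $\sim\log_2 r$ dyadic scales down to a fixed radius, verifying that the product of the prefactors beats $r^{-(d+\alpha)}$ once $\Lambda=2^{2(d+\alpha)}$, that the finitely many bottom scales below $R_q$ contribute only a $q$-dependent factor, and that the source terms sum geometrically. Both arguments exploit the same mechanism --- $v_D(x)\to 0$, so the prefactor is eventually smaller than any fixed constant and, compounded over logarithmically many scales, yields any prescribed polynomial decay --- but the paper's contradiction scheme is more compact and hides the bookkeeping, whereas your direct iteration is more quantitative about where each ingredient ($R_q$, $\Lambda$, the bad scales, $M$) enters the constant. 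Your closing step on the bounded region $3\le|x|\le 6R_q$, using the lower bound $v_D(x)\ge c_2(6R_q)>0$ together with $f\le M$, is the right way to finish and corresponds to what the paper leaves implicit there.
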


\begin{proof}
Suppose that for some $\gamma \geq 0$, $\gamma \neq d$, and any $x \in \R^d$ we have $f(x) \leq C_\gamma (1+|x|)^{-\gamma}$. It is clearly true for $\gamma=0$. Then, for $|x| \geq 3$, we have
\begin{align}
\label{eq:bound2}
f(x) & \le  C_{q,\gamma} v_D(x)\left(\sup_{y \in B\left(x,\frac{|x|}{2}\right)}f(y)
       + \int_{B\left(x,\frac{|x|}{2}\right)^c} (1+|z|)^{-\gamma} |z-x|^{-d-\alpha}dz \right) 
\, .
\end{align}
Hence, by \eqref{eq:cruest}, 
\begin{align}
\label{eq:bound4}
f(x) & \le  C_{q,\gamma} v_D(x)\left(\sup_{y \in B\left(x,\frac{|x|}{2}\right)}f(y) + |x|^{-\gamma^{'}} \right)
\, , & |x| \geq 3
\, ,
\end{align}
with $\gamma^{'} = \min(\gamma + \alpha, d + \alpha)$. Observe that $|x| \leq 2|y|$ for $y \in B\left(x,\frac{|x|}{2}\right)$. Hence
\begin{align}
\label{eq:est2}
|x|^{\gamma^{'}} f(x) & \le  C^{(1)}_{q,\gamma} v_D(x)\left(\sup_{y \in B\left(x,\frac{|x|}{2}\right)}|y|^{\gamma^{'}}f(y) + 1\right)
 \, . 
\end{align}
Denote: $g(s)=\sup_{y \in B(0,s)} |y|^{\gamma^{'} }f(y)$. It is clear that $g$ is nondecreasing and 
\begin{align}
\label{eq:bound}
g(s)\leq C^{(2)}s^{\gamma^{'}}
\, .
\end{align} 
We will show that $g(s)$ is bounded too. 

Indeed, observe that by definition of $v_D$ we have $v_D(x) \leq \min \left\{\ex^x \tau_D, (\inf_{y \in D} q(y))^{-1}\right\}$. Since $\lim_{|x| \rightarrow \infty} q(x) = \infty$, $v_D(x) \rightarrow 0$ as $|x| \rightarrow \infty$. Thus there exists $R \geq 3$ such that $C^{(1)}_{q,\gamma} v_D(x) \leq 2^{-\gamma^{'} - 1}$ for $|x| \geq R$. By \eqref{eq:est2}, for $R \leq |x| \leq s$ we get 
\begin{align*}
|x|^{\gamma^{'}} f(x) & \le 2^{-\gamma^{'} - 1} g(2|x|) +  2^{-\gamma^{'} - 1} \le 2^{-\gamma^{'} - 1} g(2s) + 2^{-\gamma^{'} - 1}
\, .
\end{align*}
On the other hand, for $|x| \leq R$ we have 
\begin{align*}
|x|^{\gamma^{'}} f(x) & \le g(R) \le  g(R) + 2^{-\gamma^{'} - 1} g(2s) 
\, .
\end{align*}
Consequently, $g(2s) \geq 2^{\gamma^{'} + 1}\left( g(s) - C^{(3)}_{q,\gamma}\right)$ when $s \geq R$. If $g(s) \ge C^{(3)}_{q,\gamma}$ then by induction, 
\begin{align}
\label{eq:bound1}
g(2^ns) & \geq 2^{n(\gamma^{'} + 1)} g(s) - C^{(3)}_{q,\gamma} \left( \frac{2^{n(\gamma^{'} + 1)} - 1}{1 - 2^{-\gamma^{'} - 1}}\right) 
\, , & n=1,2,... .
\end{align}
Suppose now that for some $s \geq R$ we have $g(s) \geq \left( 1+ \frac{1}{1 - 2^{-\gamma^{'} - 1}}\right)C^{(3)}_{q,\gamma}$. By \eqref{eq:bound} and \eqref{eq:bound1}, we get
\begin{align*}
C^{(2)} 2^{n\gamma^{'}}s^{\gamma^{'}} & \geq g(2^ns) \geq \left( 2^{n(\gamma^{'} + 1)} + \frac{1}{1 - 2^{-\gamma^{'} - 1}}\right)C^{(3)}_{q,\gamma} \geq 2^{n(\gamma^{'} + 1)} C^{(3)}_{q,\gamma}
 \, , & n=1,2,... .
\end{align*}
This gives a contradiction and $g(s)$ is bounded. Hence 
\begin{align}
\label{eq:bound3}
f(x) & \leq C_{q,\gamma} (1+|x|)^{-\gamma^{'}}
\, , & x \in \R^d
\, ,
\end{align}
where $\gamma^{'} = \min(\gamma + \alpha, d + \alpha)$. 

By \eqref{eq:bound3}, we may write the estimates \eqref{eq:bound2} with $\gamma = \gamma^{'}$ and, consequently, we get \eqref{eq:bound4} with new, larger $\gamma^{'}$. Starting from \eqref{eq:bound4}, we can repeat our reasoning and we obtain the estimate \eqref{eq:bound3} again, but now with new, larger $\gamma^{'}$.

Applying this argument repeatedly, we can improve the degree of the estimate \eqref{eq:bound3} in each next step. If after some step we get $\gamma^{'} = d$ (see \eqref{eq:cruest}), then we put $\gamma = d - \frac{\alpha}{2}$ in the next one. It is clear that after $\left\lfloor 2 + \frac{d}{\alpha}\right\rfloor$ steps we obtain that $f(x)  \leq C_q (1+|x|)^{-d-\alpha}$, $x \in \R^d$. By \eqref{eq:bound4}, this also gives $f(x) \leq C_q v_D(x)|x|^{-d-\alpha}$ for $|x| \geq 3$.
\end{proof}

\medskip

\section{Uniform estimates of $q$-harmonic functions}

In this section we obtain uniform estimates of $q$-harmonic functions in balls. "Uniform" means that the constants in these estimates do not depend on the potential $q$. In studying IU in next sections it will be crucial that these constants do not depend on $q$. The proofs of the results in this section adapt the ideas from \cite{bib:BKK} and \cite{bib:SW}, where the $\alpha$-harmonic functions were considered. 

Lemma \ref{lm:est} concerns a comparability of functions $u_D$ (the gauge function) and $v_D$ in the case of balls and plays the crucial role in the proofs of Theorem \ref{th:eig} and Theorem \ref{th:bhi}. The proof of this very important lemma is very similar to its $\alpha$-stable equivalent, which was proved in \cite{bib:SW} first time. We use the same idea with cut-off function and properties of fractional Laplacian.

\begin{lemma}
\label{lm:est}
Let $q \in L^\infty_{\loc}$, $q \geq 0$. Let $r > 0$ and $0 < \kappa <1$. There exists a constants $C_{r,\kappa}$ such that for any $x \in \R^d$ and $D = B(x,r)$
\begin{align}
\label{eq:est}
C_{r,\kappa}^{-1} \, v_D(y)
  & \le
 u_D(y)
    \le
   C_{r,\kappa} \, v_D(y)
  \, , & y \in B(x,\kappa r)
  \, .
\end{align}
\end{lemma}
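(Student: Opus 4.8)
The plan is to prove the two inequalities in \eqref{eq:est} separately, being careful that every constant depends only on $r$, $\kappa$ (and on $\alpha$, $d$) and never on $q$ — this uniformity in $q$ is the whole point. By translation invariance of $X_t$ and of $(-\Delta)^{\alpha/2}$ I may assume $x=0$, so $D=B(0,r)$; one could also rescale to $r=1$ (the rescaled potential $r^\alpha q(r\cdot)$ stays in the class), but that is not necessary.

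For the lower estimate $v_D(y)\le C_{r,\kappa}u_D(y)$ I would use the Ikeda--Watanabe formula \eqref{eq:IWF}. Since $D$ is a bounded domain with the exterior cone property and $q\ge 0$, the pair $(D,q)$ is gaugeable, so \eqref{eq:IWF} applies with $f\equiv 1$ on $D^c$; together with $X_{\tau_D}\in D^c$ this gives
\begin{align*}
u_D(y)=\ex^y[e_q(\tau_D)]=\cA\int_D V_D(y,z)\,\kappa_D(z)\,dz\,,\qquad \kappa_D(z):=\int_{D^c}|z-w|^{-d-\alpha}\,dw\,,
\end{align*}
while $v_D(y)=V_D\chi_{\R^d}(y)=\int_D V_D(y,z)\,dz$. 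The key observation is that $\kappa_D$ is bounded below uniformly on $D$: restricting the integral to $|w|\ge 2r$ and using $|z-w|\le 2|w|$ there gives $\kappa_D(z)\ge c_r:=2^{-d-\alpha}\int_{B(0,2r)^c}|w|^{-d-\alpha}\,dw>0$, with $c_r$ depending only on $r$. Comparing the two integral representations pointwise in $z$ (the $q$-dependent kernel $V_D$ simply cancels) then yields $u_D(y)\ge\cA c_r\,v_D(y)$ for all $y\in D$ — in fact on the whole ball, not only on $B(0,\kappa r)$.

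The upper estimate $u_D(y)\le C_{r,\kappa}v_D(y)$ on $B(0,\kappa r)$ is the substantive part, and here I would import the cut-off idea of \cite{bib:SW}. Fix once and for all $\phi\in C_c^\infty(\R^d)$ with $0\le\phi\le 1$, $\phi\equiv 1$ on $\overline{B}(0,\kappa r)$, and $\supp\phi$ a compact subset of $D$ (say $\supp\phi\subset B(0,\tfrac{1+\kappa}{2}r)$); then $(-\Delta)^{\alpha/2}\phi$ is bounded and continuous, and I set $M_{r,\kappa}:=\|(-\Delta)^{\alpha/2}\phi\|_\infty$, a constant depending only on $r$, $\kappa$. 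Applying Dynkin's formula for the Feynman--Kac functional to $\phi$, stopped at $\tau_D$ (standard: $e_q(t)\phi(X_t)-\phi(X_0)-\int_0^t e_q(s)(-(-\Delta)^{\alpha/2}\phi-q\phi)(X_s)\,ds$ is a $\pr^y$-martingale and $\tau_D<\infty$ a.s.), and using that $\phi(X_{\tau_D})=0$ because $\supp\phi$ stays away from $D^c$, one gets, for $y\in B(0,\kappa r)$ where $\phi(y)=1$,
\begin{align*}
1=\ex^y\left[\int_0^{\tau_D}e_q(t)\Big((-\Delta)^{\alpha/2}\phi(X_t)+q(X_t)\phi(X_t)\Big)\,dt\right]\,.
\end{align*}
Combining this with the elementary identity $u_D(y)=1-\ex^y\big[\int_0^{\tau_D}q(X_t)e_q(t)\,dt\big]$ (which is just $e_q(\tau_D)-1=-\int_0^{\tau_D}q\,e_q$, legitimate since $q$ is bounded on $\overline D$) gives
\begin{align*}
u_D(y)=\ex^y\left[\int_0^{\tau_D}e_q(t)\Big((-\Delta)^{\alpha/2}\phi(X_t)-q(X_t)\big(1-\phi(X_t)\big)\Big)\,dt\right]\,,\qquad y\in B(0,\kappa r)\,.
\end{align*}
Since $q\ge 0$ and $0\le\phi\le 1$, the term carrying $q(1-\phi)$ is non-negative, so dropping it and bounding $(-\Delta)^{\alpha/2}\phi\le M_{r,\kappa}$ leaves $u_D(y)\le M_{r,\kappa}\,\ex^y\big[\int_0^{\tau_D}e_q(t)\,dt\big]=M_{r,\kappa}\,v_D(y)$. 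Taking $C_{r,\kappa}=\max\{(\cA c_r)^{-1},M_{r,\kappa}\}$ finishes the proof.

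I expect the only real difficulty to be exactly the $q$-uniformity of the upper bound: crude pointwise estimates are useless here, because when $q$ is large on $D$ both $u_D$ and $v_D$ become small and nothing but an identity tying them together can produce a $q$-free comparison constant — which is precisely what the cut-off function together with Dynkin's formula supply, arranged so that the only surviving coefficient is $\|(-\Delta)^{\alpha/2}\phi\|_\infty$. The remaining points (validity of Dynkin's formula against the multiplicative functional $e_q$ with optional stopping at $\tau_D$, and $\int_0^{\tau_D}q(X_t)\,dt<\infty$ a.s.) are routine.
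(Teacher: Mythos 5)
Your proof is correct and follows essentially the same route as the paper: the lower bound via the Ikeda--Watanabe formula with the uniform lower bound on $\int_{D^c}|z-w|^{-d-\alpha}\,dw$, and the upper bound via the Song--Wu cut-off function, where the key identity $1=V_D\bigl((-\Delta)^{\alpha/2}\phi+q\phi\bigr)(y)$ is combined with $u_D=1-V_D q$ to drop the non-negative $q(1-\phi)$ term. The only cosmetic difference is that you derive this identity from Dynkin's formula for the Feynman--Kac functional, whereas the paper cites it directly as Proposition 3.16 of \cite{bib:BB1}; these are the same fact.
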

\begin{proof}
Fix $0<\kappa<1$. Let $f \in C^2(\R^d)$ be a function such that $f \equiv 1$ on $B(x,\kappa r)$, $f \equiv 0$ on $B(x,r)^c$ and $0 \leq f \leq 1$. By \cite[Proposition 3.16]{bib:BB1}, we have for $z \in D$
\begin{align*}
 V_D\left(-(-\Delta)^{\frac{\alpha}{2}}f - qf\right)(z) = - f(z)
  \, .
\end{align*}
Here it is worth to point out that in \cite{bib:BB1} $e_q(t)$ is defined in a slightly different way than in our paper (namely, in \cite{bib:BB1} it is defined without a minus sign).

For $z \in B(x,\kappa r)$ it follows that
\begin{align*}
 \int_D V_D(z,y)(-\Delta)^{\frac{\alpha}{2}}f(y)dy & = f(z)- \int_D V_D(z,y)q(y)f(y)dy    \\
                                                   & \geq 1 - \int_D V_D(z,y)q(y)dy       \\
                                                   & = 1 - \ex^z\left[\int_0^{\tau_D} e_q(t)q(X_t)dt\right]         
  \, . 
\end{align*}
Noting that $\Phi(t)=q(X_t)$ is locally integrable in $(0,\infty)$ almost surely, we have that $e_q(t)$ is locally absolutely continuous in $(0,\infty)$ a.s.. Then, by the theory of Lebesgue integration (see e.g. \cite[proof of the Proposition 3.16]{bib:CZ} and \cite[formula (64), section 4]{bib:CZ}), 
\begin{align*}
\int_0^{\tau_D} e_q(t)q(X_t)dt = 1 - e_q(\tau_D)                                                 
\, .
\end{align*}                                      
Hence
\begin{align*}
u_D(z) = \ex^z[e_q(\tau_D)] \leq \int_D V_D(z,y)(-\Delta)^{\frac{\alpha}{2}}f(y)dy \leq \left\|(-\Delta)^{\frac{\alpha}{2}}f\right\|_\infty v_D(z)      \, 
\end{align*} 
for $z \in B(x,\kappa r)$. Since $f \in C^2_c(\R^d)$, we have $\left\|(-\Delta)^{\frac{\alpha}{2}}f\right\|_\infty < \infty$. On the other hand, by \eqref{eq:IWF}, for any $z \in B(x,r)$, we have
\begin{align*}
u_D(z) & = \int_D V_D(z,y) \int_{D^c} \frac{dwdy}{|w-y|^{d+\alpha}} \geq  \int_D V_D(z,y) \int_{B(x,2r)^c} \frac{dwdy}{|w-y|^{d+\alpha}} \\
       & \geq  C_r \int_D V_D(z,y)dy \int_{B(x,2r)^c} \frac{dw}{|w-x|^{d+\alpha}} = \frac{C_r}{r^\alpha} v_D(z)
\, .
\end{align*} 
\end{proof}

\begin{lemma}
Let $q \in L_{loc}^{\infty}$, $q \geq 0$, $r > 0$ and $\kappa \in (0,1)$. There exists a constant $C_{r,\kappa}$ such that if $D = B(x_0,r)$, $x_0 \in \R^d$, and $f(x) = \ex^x[e_q(\tau_D)f(X_{\tau_D})]$ for $x \in D$, $f \geq 0$, then
\begin{align}
\label{eq:func}
    f(x)
   \le &
    C_{r,\kappa} \int_{B(x_0,\kappa r)^c}\frac{f(y)}{|y-x_0|^{d+\alpha}}dy
  \, , & x \in B(x_0,\kappa r)
  \, .
\end{align}
\end{lemma}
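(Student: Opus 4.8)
The plan is to use the Ikeda--Watanabe formula \eqref{eq:IWF}, which expresses $f(x)=\ex^x[e_q(\tau_D)f(X_{\tau_D})]$ for the ball $D=B(x_0,r)$ as
\[
f(x)=\cA\int_D V_D(x,y)\int_{D^c}\frac{f(z)}{|z-y|^{d+\alpha}}\,dz\,dy,
\qquad x\in D.
\]
First I would split the outer Poisson-type integral over $D^c$ into the near part $B(x_0,2r)\setminus D$ and the far part $B(x_0,2r)^c$. On the far part the kernel $|z-y|^{-d-\alpha}$ is comparable, uniformly for $y\in D$, to $|z-x_0|^{-d-\alpha}$ (since $|y-x_0|<r<|z-x_0|/2$), so that piece contributes at most a constant times $\bigl(\int_D V_D(x,y)\,dy\bigr)\cdot\int_{B(x_0,2r)^c}f(z)|z-x_0|^{-d-\alpha}\,dz = v_D(x)\int_{B(x_0,2r)^c}f(z)|z-x_0|^{-d-\alpha}\,dz$ times a constant depending only on $r$; and $B(x_0,2r)^c\subset B(x_0,\kappa r)^c$, so this is dominated by the right-hand side of \eqref{eq:func} once we control $v_D(x)$.

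The step I expect to be the main obstacle is the near part, i.e. bounding $\int_D V_D(x,y)\int_{B(x_0,2r)\setminus D}\frac{f(z)}{|z-y|^{d+\alpha}}\,dz\,dy$ for $x\in B(x_0,\kappa r)$, because here $z$ can be arbitrarily close to $y\in\partial D$ and the kernel blows up. The way around this is to reverse the roles: for fixed $x\in B(x_0,\kappa r)$ the inner double integral over $z$ near $\partial D$ and $y\in D$ is exactly $\ex^x\bigl[e_q(\tau_D)f(X_{\tau_D})\mathbf 1_{X_{\tau_D}\in B(x_0,2r)}\bigr]$, which, dropping $e_q(\tau_D)\le1$ and using $f\ge0$, is at most $\ex^x\bigl[\mathbf 1_{X_{\tau_D}\in B(x_0,2r)}f(X_{\tau_D})\bigr]$. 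By the explicit Poisson kernel of a ball (or again \eqref{eq:IWF} with $q\equiv0$), this equals $\int_{B(x_0,2r)\setminus D}P_D(x,z)f(z)\,dz$ where $P_D(x,z)\le C_r$ uniformly for $x\in B(x_0,\kappa r)$, $z\in B(x_0,2r)\setminus D$, because $x$ stays at distance $\ge(1-\kappa)r$ from $\partial D$ and $z$ stays within distance $r$ of $\partial D$; on this annulus $|z-x_0|^{-d-\alpha}$ is bounded below by $(2r)^{-d-\alpha}$, so $C_r\le C_r' (2r)^{d+\alpha}|z-x_0|^{-d-\alpha}$. Hence the near part is $\le C_{r,\kappa}\int_{B(x_0,\kappa r)^c}f(z)|z-x_0|^{-d-\alpha}\,dz$ as well.

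Finally I would combine the two pieces. For the far part I still need to absorb the factor $v_D(x)$; since $v_D(x)\le\ex^x\tau_D\le\ex^x\tau_{B(x_0,r)}=c\,(r^2-|x-x_0|^2)^{\alpha/2}\le c\,r^\alpha$, that factor is bounded by a constant depending only on $r$, so the far part is genuinely $\le C_{r}\int_{B(x_0,\kappa r)^c}f(z)|z-x_0|^{-d-\alpha}\,dz$. Adding the near and far estimates yields \eqref{eq:func} with $C_{r,\kappa}$ depending only on $r,\kappa$ (and $d,\alpha$), and in particular not on $q$ — which is exactly the uniformity we need. Two technical points to check along the way: that $(D,q)$ is gaugeable so that \eqref{eq:IWF} applies (this holds since $q\ge0$ and $D$ is a ball, hence has the exterior cone property), and that all the integrals are finite, which follows from $f$ being the harmonic extension of boundary data for which the expectation in \eqref{def:harm} is assumed absolutely convergent.
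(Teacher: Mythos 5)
There is a genuine gap in the near-field step: your claim that the Poisson kernel $P_D(x,z)$ is bounded uniformly for $x\in B(x_0,\kappa r)$, $z\in B(x_0,2r)\setminus D$ is false. The explicit formula for the exit distribution of the $\alpha$-stable process from a ball, used in the paper's proof, is
\begin{align*}
P_{x_0,r}(x,z) & = C_{\alpha,d}\left(\frac{r^2-|x-x_0|^2}{|z-x_0|^2-r^2}\right)^{\alpha/2}\frac{1}{|x-z|^d},
 & |z-x_0|>r,
\end{align*}
and the factor $(|z-x_0|^2-r^2)^{-\alpha/2}$ blows up as $z$ approaches $\partial D$ from outside. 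Keeping $x$ away from $\partial D$ controls $|x-z|^{-d}$ and the numerator, but does nothing to this denominator. In particular $P_D(x,z)$ is \emph{not} dominated by a constant (nor by $C_r|z-x_0|^{-d-\alpha}$, which stays bounded near $\partial D$), so the near part does not go through as written. This singularity is precisely the reason the estimate \eqref{eq:func} is not immediate, and it is exactly what the paper's argument is built to kill: instead of exiting from the fixed ball $D$, one writes $f(x)=\ex^x[e_q(\tau_{B(x_0,\delta)})f(X_{\tau_{B(x_0,\delta)}})]\le\ex^x[f(X_{\tau_{B(x_0,\delta)}})]$ for every $\delta\in(\gamma,r)$ with $\gamma=(1+\kappa)r/2$, and then \emph{averages} over $\delta$. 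The averaged kernel
\begin{align*}
K(x,y)=\frac{1}{r-\gamma}\int_{\gamma}^{r\wedge|y-x_0|}P_{x_0,\delta}(x,y)\,d\delta
\end{align*}
has only an integrable singularity $\int_\gamma^{|y-x_0|}(|y-x_0|-\delta)^{-\alpha/2}d\delta$, which is finite, and one checks $K(x,y)\le C_{\kappa,r}|y-x_0|^{-d-\alpha}$. This averaging (due to Song--Wu and Bogdan--Kulczycki--Kwa\'snicki) is the missing idea; without it, a decomposition based on the single exit distribution from $D$ cannot produce \eqref{eq:func}. Your treatment of the far part and of the gaugeability issue are fine, but the near part needs to be replaced by the averaging device.
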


\begin{proof}
Let $\gamma = (1+\kappa)r/2$. By definition, the function $f$ is regular $q$-harmonic in $D$. Recall that regular $q$-harmonicity implies $q$-harmonicity and the equality \eqref{def:harm} holds for $U=B(x_0,\delta) \subset D$, where $\delta \in (\gamma, r)$. Then for $\delta \in (\gamma, r)$ and any $x \in B(x_0,\kappa r)$ we have 
\begin{align*}
f(x) = \ex^x[e_q(\tau_{B(x_0,\delta)})f(X_{\tau_{B(x_0,\delta)}})] \leq \ex^x[f(X_{\tau_{B(x_0,\delta)}})]
\, .
\end{align*}
To estimate the last expectation we follow the proof of \cite[Lemma 6]{bib:BKK}. It is known (see \cite{bib:BlGR}) that for each $x \in B(x_0,\delta)$ the $\pr^x$ distribution of $X(\tau_{B(x_0,\delta)})$ has a density given by the formula
\begin{align*}
P_{x_0,\delta}(x,y) & = C_{\alpha,d} \left(\frac{\delta^2-|x-x_0|^2}{|y-x_0|^2-\delta^2}\right)^{\alpha/2} \frac{1}{|x-y|^d}
\, , &  |y-x_0|>\delta
\, ,
\end{align*}
 and  $P_{x_0,\delta}(x,y)=0$, when $|y-x_0|\le \delta$, $C_{\alpha,d} = \Gamma(d/2) \pi^{-d/2 -1} \sin(\pi \alpha/2)$. Hence, by Fubini-Tonelli theorem
\begin{align*}
f(x) & \leq \frac{1}{r-\gamma}\int_{\gamma}^r \ex^x[f(X_{\tau_{B(x_0,\delta)}})] d\delta = \int_{B^c(x_0,\gamma)}K(x,y)f(y)dy
\, , & x \in B(x_0,\kappa r)
\, ,
\end{align*}
where
\begin{align*}
K(x,y) & = \frac{1}{r-\gamma}\int_{\gamma}^{r \wedge |y-x_0|} P_{x_0,\delta}(x,y) d\delta = \frac{C_{\alpha,d}}{r-\gamma}\int_{\gamma}^{r \wedge |y-x_0|} \left(\frac{\delta^2-|x-x_0|^2}{|y-x_0|^2-\delta^2}\right)^{\alpha/2} \frac{1}{|x-y|^d} d\delta
\, ,
\end{align*}
for $y \in B^c(x_0,\gamma)$. The inequalities
\begin{align*}
\frac{|x-y|}{|y-x_0|} & \geq \frac{|y-x_0|-|x-x_0|}{|y-x_0|} \geq 1 - \frac{\kappa r}{\gamma}
\, , & \frac{|y-x_0|+\delta}{|y-x_0|} \geq 1
\,
\end{align*}
and 
\begin{align*}
\delta^2 - |x-x_0|^2 \leq r^2
\, 
\end{align*}
gives that 
\begin{align*}
K(x,y) \leq \frac{C_{\kappa,r}}{|y-x_0|^{d+\alpha/2}}\int_{\gamma}^{r \wedge |y-x_0|}\frac{d\delta}{(|y-x_0|-\delta)^{\alpha/2}} \leq C_{\kappa,r}|y-x_0|^{-d-\alpha}
\, .
\end{align*}
Hence
\begin{align*}
f(x) & \leq C_{\kappa,r} \int_{B^c(x_0,\gamma)} |y-x_0|^{-d-\alpha} f(y) dy \leq C_{\kappa,r} \int_{B^c(x_0,\kappa r)} |y-x_0|^{-d-\alpha} f(y) dy
\, , & x \in B(x_0,\kappa r)
\, ,
\end{align*}
which ends the proof.
\end{proof}

A main and crucial tool to study the intrinsic ultracontractivity for stable semigroups on unbounded open sets in \cite{bib:Kw} was the uniform boundary Harnack inequality for functions $\alpha$-harmonic in an arbitrary open set $D \subset \R^d$ with a constant independent of radius of ball including the domain of $\alpha$-harmonicity (see \cite[Lemma 3]{bib:Kw}. The idea of such strong version of this inequality comes from the papers \cite{bib:SW} and \cite{bib:BKK}, where the functions harmonic with respect to symmetric stable process were considered. In our case it suffices to prove the weaker version of such inequality only for balls. 

\begin{theorem} 
\label{th:bhi}
Let $q \in L^\infty_{\loc}$, $q \geq 0$ and $r > 0$. There exists a constant $C$ such that if $D = B(x_0,r)$, $x_0 \in \R^d$, and $f(x) = \ex^x[e_q(\tau_D)f(X_{\tau_D})]$ for $x \in D$, $f \geq 0$, then
\begin{align}
\label{eq:bhi}
  C^{-1} v_D(x) \int_{B(x_0,\frac{r}{2})^c}\frac{f(y)}{|y-x_0|^{d+\alpha}}dy
  & \le
   \, f(x)
   \le
    C v_D(x) \int_{B(x_0,\frac{r}{2})^c}\frac{f(y)}{|y-x_0|^{d+\alpha}}dy
  \, ,
\end{align}
for $x \in B(x_0,\frac{r}{2})$.  
\end{theorem}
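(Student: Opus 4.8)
The plan is to combine the two preceding lemmas with the comparability result of Lemma \ref{lm:est}. The upper bound is already essentially done: the lemma just proven gives, with $\kappa = 1/2$, the estimate $f(x) \le C_r \int_{B(x_0,r/2)^c} |y-x_0|^{-d-\alpha} f(y)\,dy$ for $x \in B(x_0,r/2)$, but with the constant $C_r$ in place of $C v_D(x)$. To replace $C_r$ by $C v_D(x)$ we first restrict to a smaller concentric ball: applying that lemma on $B(x_0, 3r/4)$ instead of $B(x_0,r)$ and decomposing $f$ via \eqref{eq:pot1} (or directly by $q$-harmonicity on the smaller ball together with the strong Markov property at $\tau_{B(x_0,3r/4)}$), we can absorb the tail integral over $B(x_0,r/2)^c$ into the form $u_{B(x_0,r/2)}(x)$ times the desired tail integral over $B(x_0,r/4)^c$, and the latter tail integral is comparable to the one over $B(x_0,r/2)^c$ since $f$ restricted to the annulus $B(x_0,r/2)\setminus B(x_0,r/4)$ contributes a harmless amount once one uses $q$-harmonicity once more. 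Then Lemma \ref{lm:est} turns $u_{B(x_0,r/2)}(x)$ into a constant multiple of $v_{B(x_0,r/2)}(x)$, and finally $v_{B(x_0,r/2)}(x) \le v_D(x)$ (monotonicity of $v$ in the domain) gives the clean upper bound.

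For the lower bound I would argue as follows. For $x \in B(x_0,r/2)$, by the ``put mass at infinity'' principle: since $f \ge 0$ and $f(x) = \ex^x[e_q(\tau_D) f(X_{\tau_D})]$, and using that $q$-harmonicity propagates to the smaller ball $U = B(x_0,r/2)$, write $f(x) \ge \ex^x[e_q(\tau_U) f(X_{\tau_U})]$. Apply the Ikeda–Watanabe-type identity \eqref{eq:IWF} on $U$ (legitimate since $(U,q)$ is gaugeable and $U$ is a nice domain) to obtain
\begin{align*}
f(x) & \ge \cA \int_U V_U(x,y) \int_{U^c} \frac{f(z)}{|z-y|^{d+\alpha}}\,dz\,dy
\, .
\end{align*}
Restricting the inner integral to $z \in B(x_0,r/2)^c = U^c$ and using $|z-y| \le |z-x_0| + |y-x_0| \le 2|z-x_0|$ for $y \in U$, $|z-x_0|\ge r/2$, we bound the inner integral below by a constant times $\int_{B(x_0,r/2)^c} f(z)|z-x_0|^{-d-\alpha}\,dz$, which is independent of $y$. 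Pulling it out leaves $\int_U V_U(x,y)\,dy = v_U(x)$, so $f(x) \ge C^{-1} v_U(x)\int_{B(x_0,r/2)^c} f(z)|z-x_0|^{-d-\alpha}\,dz$. It then remains to replace $v_U(x) = v_{B(x_0,r/2)}(x)$ by $v_D(x) = v_{B(x_0,r)}(x)$ up to a constant for $x \in B(x_0,r/2)$; this is a Harnack-type comparability of $v$-functions on concentric balls, which one gets from Lemma \ref{lm:est} (both $v_U$ and $v_D$ are comparable to the respective gauge functions, and the gauge functions on concentric balls are comparable on the inner ball) or directly from \eqref{eq:pot1} applied with $D = U$, $D' = B(x_0,r)$, since the added term is nonnegative and bounded above by a constant times $v_U$ again via Lemma \ref{lm:est} and the tail estimate.

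The main obstacle is the bookkeeping in the upper bound: passing from the non-uniform constant $C_r$ delivered by the previous lemma to the factor $v_D(x)$ requires inserting one extra layer of $q$-harmonicity on an intermediate concentric ball and controlling the contribution of $f$ on the thin annulus, and one must check the tail integrals over $B(x_0,r/4)^c$ and $B(x_0,r/2)^c$ are comparable — this uses that $f$ itself on the annulus is controlled by its own tail, i.e. a bootstrap of the same inequality. The lower bound is more robust, the only delicate point being the comparability $v_{B(x_0,r/2)}(x) \asymp v_{B(x_0,r)}(x)$ on $B(x_0,r/2)$, which follows from Lemma \ref{lm:est} together with the classical Harnack inequality for the gauge functions (or, since the constant need not be uniform in $q$ here — only in $x_0$ — one may alternatively invoke \eqref{eq:pot3}-type decompositions). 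Throughout, all constants depend only on $r$ (and $\alpha,d$), not on $q$ or $x_0$, which is exactly what Lemma \ref{lm:est} guarantees.
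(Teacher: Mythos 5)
Your overall strategy matches the paper's: decompose at the exit time of an intermediate concentric ball, use \eqref{eq:IWF} for the contribution from far jumps, control the near contribution via the gauge function $u$ together with Lemma \ref{lm:est} and the estimate \eqref{eq:func}, and finally reconcile the resulting $v$-factor and tail integral with the ones in \eqref{eq:bhi}. (The paper carries this out concretely for $r=1$ with the radii $3/4$ and $7/8$.) However, your lower-bound argument has a genuine gap. You take $U = B(x_0,r/2)$, apply \eqref{eq:IWF} on $U$ to get $f(x) \ge C^{-1} v_U(x) \int_{U^c} f(z)|z-x_0|^{-d-\alpha}\,dz$, and then assert $v_U(x) \asymp v_D(x)$ for all $x \in B(x_0,r/2)$, citing Lemma \ref{lm:est}. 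This comparability fails. As $x \to \partial U = \partial B(x_0,r/2)$ we have $v_U(x) \le \ex^x \tau_U \to 0$, while $v_D(x)$ stays strictly positive (take $q \equiv 0$: $v_D(x) = c(r^2 - |x-x_0|^2)^{\alpha/2} \to c(3r^2/4)^{\alpha/2}$). Lemma \ref{lm:est} only gives $u_U \asymp v_U$ on $B(x_0, \kappa\, r/2)$ for fixed $\kappa < 1$, not on all of $U$ — indeed $u_U(x) \to 1$ and $v_U(x) \to 0$ near $\partial U$, so that comparability cannot extend to the boundary. The same obstruction defeats the alternative route via \eqref{eq:pot1}, since it again requires $u_U(x) \le C v_U(x)$ near $\partial U$.

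The fix is to apply \eqref{eq:IWF} on a strictly larger intermediate ball $U' = B(x_0, 3r/4)$ (so that $\overline{B(x_0,r/2)} \subset U'$, putting every target point $x$ in the region where Lemma \ref{lm:est} applies with $\kappa = 2/3$), and to split the resulting exit place as the paper does into $B(x_0,7r/8)^c$ and the annulus $B(x_0,7r/8)\setminus U'$, controlling the annulus term by $u_{U'}(x)\cdot\sup_{B(x_0,7r/8)} f$ and then by \eqref{eq:est} and \eqref{eq:func}. Your upper-bound sketch implicitly does exactly this, so the lower bound should be made symmetric with it rather than using $U = B(x_0,r/2)$. One further remark: the paper proves the statement first for $r=1$ and then uses the scaling relation $\cL_{\pr^x}(X_t,\tau_U,X_{\tau_U}) = \cL_{\pr^{x/r}}(rX_{r^{-\alpha}t}, r^\alpha\tau_{r^{-1}U}, rX_{\tau_{r^{-1}U}})$ to obtain the result for all $r$ with a constant $C$ independent of $r$. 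Your direct argument in radius $r$ produces constants $C_r$, which satisfies the literal wording of the theorem but not what Corollary \ref{cor:bhisc} needs, since there $r$ is a free variable; the scaling step is therefore not merely cosmetic.
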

\begin{proof}
First we prove \eqref{eq:bhi} for $r=1$. Let $x \in B(x_0, 1/2)$. Recall that the equality $f(x) = \ex^x[e_q(\tau_D)f(X_{\tau_D})]$, $x \in D$, implies \eqref{def:harm} for $U = B(x_0, 3/4) \subset D$. We have
\begin{align*}
 f(x) & = \ex^x[X_{\tau_{B(x_0, 3/4)}} \in B(x_0, 7/8)^c; e_q(\tau_{B(x_0, 3/4)})f(X_{\tau_{B(x_0, 3/4)}})] \\
      & + \ex^x[X_{\tau_{B(x_0, 3/4)}} \in B(x_0, 7/8) \backslash B(x_0, 3/4); e_q(\tau_{B(x_0, 3/4)})f(X_{\tau_{B(x_0,3/4)}})] \\ 
      & = f_1(x) + f_2(x)
  \, 
\end{align*}
Using the representation \eqref{eq:IWF} for $f_1$, we easy show that 
\begin{align*}
 C^{-1} v_{B(x_0, 3/4)}(x) & \int_{B(x_0, 7/8)^c} \frac{f(z)}{|z-x_0|^{d+\alpha}}dz  \leq f_1(x) \\
 & \leq C v_{B(x_0, 3/4)}(x) \int_{B(x_0, 7/8)^c} \frac{f(z)}{|z-x_0|^{d+\alpha}}dz
 \, , &  \, x \in B(x_0, 1/2)
\, .
\end{align*}
For $f_2$ we have 
\begin{align*}
 f_2(x) & \leq  u_{B(x_0, 3/4)}(x) \sup_{y \in B(x_0, 7/8)} f(y) \\ 
        & \leq C v_{B(x_0, 3/4)}(x) \int_{B(x_0, 7/8)^c} \frac{f(z)}{|z-x_0|^{d+\alpha}}dz
 \, , &  \, x \in B(x_0, 1/2)
\, 
\end{align*}
by \eqref{eq:est} and \eqref{eq:func}. Thus 
\begin{align*}
 C^{-1} v_{B(x_0, 3/4)}(x) & \int_{B(x_0, 7/8)^c} \frac{f(z)}{|z-x_0|^{d+\alpha}}dz  \leq f(x) \\
 & \leq C v_{B(x_0, 3/4)}(x) \int_{B(x_0, 7/8)^c} \frac{f(z)}{|z-x_0|^{d+\alpha}}dz
 \, , &  \, x \in B(x_0, 1/2)
\, .
\end{align*}
Clearly, $v_{B(x_0, 3/4)}(x) \leq v_{B(x_0, 1)}(x)$ and $\int_{B(x_0, 7/8)^c} \frac{f(z)}{|z-x_0|^{d+\alpha}}dz \leq \int_{B(x_0, 1/2)^c} \frac{f(z)}{|z-x_0|^{d+\alpha}}dz$. It suffices to show the opposite inequalities. By \eqref{eq:pot1} and \eqref{eq:est}, we have
\begin{align*}
 v_{B(x_0, 1)}(x) & \leq v_{B(x_0, 3/4)}(x) + u_{B(x_0, 3/4)}(x) \sup_{y \in B(x_0,1)} v_{B(x_0,1)}(y) \\
 &                  \leq C v_{B(x_0,3/4)}(x)
 \, , &  \, x \in B(x_0, 1/2)
\, .
\end{align*}
The last inequality follows by the fact that $v_{B(x_0,1)}(y) \leq \ex^y \tau_{B(x_0,1)} \leq C$.
Similarly, by \eqref{eq:func} we get
\begin{align*}
\int_{B(x_0, 1/2)^c} \frac{f(z)}{|z-x_0|^{d+\alpha}}dz & \leq  \int_{B(x_0, 7/8)^c} \frac{f(z)}{|z-x_0|^{d+\alpha}}dz + C \sup_{y \in B(x_0, 7/8)}f(y)\\
                                                       & \leq C \int_{B(x_0, 7/8)^c} \frac{f(z)}{|z-x_0|^{d+\alpha}}dz
\, .
\end{align*}
This completes the proof of \eqref{eq:bhi} for $r=1$. Now we prove these estimates for an arbitrary fixed $r>0$. By the scaling property (see e.g. \cite[page 265]{bib:BK}), $(X_t, \pr^x)\stackrel{d}{=} (rX_{r^{-\alpha}t}, \pr^{\frac{x}{r}})$. For an open set $U$ we have 
\begin{align*}
\tau^{rX_{r^{-\alpha}t}}_U & = \inf\left\{t>0: rX_{r^{-\alpha}t} \notin U\right\} = r^{\alpha}\inf\left\{r^{-\alpha}t>0: X_{r^{-\alpha}t} \notin r^{-1}U\right\}   \\
        & = r^{\alpha}\inf\left\{s>0: X_s \notin r^{-1}U\right\} = r^{\alpha} \tau^{X_t}_{r^{-1}U} = r^{\alpha} \tau_{r^{-1}U}
\, .
\end{align*}
We get 
\begin{align}
\label{eq:distr}
\cL_{\pr^x}(X_t,  \tau_U, X_{\tau_U}) = \cL_{\pr^{\frac{x}{r}}}(rX_{r^{-\alpha}t}, r^{\alpha} \tau_{r^{-1}U}, rX_{\tau_{r^{-1}U}})
\, ,
\end{align}
where $\cL_{\pr^x}$ denote the distribution with respect to $\pr^x$. By this, we obtain
\begin{align*}
 f(x) & = \ex^x[e_q(\tau_{B(x_0,r)})f(X_{\tau_{B(x_0,r)}})] 
        = \ex^x\left[\exp\left(-\int_0^{\tau_{B(x_0,r)}} q(X_s)ds\right) f(X_{\tau_{B(x_0,r)}})\right]     \\
      & = \ex^y\left[\exp\left(-\int_0^{r^\alpha\tau_{B(y_0,1)}} q(rX_{r^{-\alpha}s})ds\right) f(rX_{\tau_{B(y_0,1)}})\right]
\, ,
\end{align*}
where $y_0 = r^{-1}x_0$ and $y = r^{-1}x$. A simple change of variables yields that for $x \in B(x_0, r)$ we have
\begin{align*}
 f(x) = \ex^y\left[\exp\left(-\int_0^{\tau_{B(y_0,1)}} r^\alpha q(rX_s)ds\right) f(rX_{\tau_{B(y_0,1)}})\right]
      = \ex^y[e_{q_r}(\tau_{B(y_0,1)})f_r(X_{\tau_{B(y_0,1)}})]
\, ,
\end{align*}
where $f_r(z) = f(rz)$ and $q_r(z) = r^\alpha q(rz)$. It follows that for $y \in B(y_0,1)$ we have 
\begin{align*}
f_r(y) =  \ex^y[e_{q_r}(\tau_{B(y_0,1)})f_r(X_{\tau_{B(y_0,1)}})]
\, .
\end{align*}
By using the inequalities \eqref{eq:bhi} for a potential $q_r$, a function $f_r$ and $B(y_0,1)$, we obtain for $y \in B(y_0,1/2)$
\begin{equation}
\label{eq:unit}
\begin{split}
C^{-1} & \ex^y\left[\int_0^{\tau_{B(y_0,1)}} \exp\left(-\int_0^t q_r(X_s)ds\right)dt\right]\int_{B(y_0, 1/2)^c} \frac{f_r(z)}{|z-y_0|^{d+\alpha}}dz \\
& \leq f_r(y) \leq C \ex^y\left[\int_0^{\tau_{B(y_0,1)}} \exp\left(-\int_0^t q_r(X_s)ds\right)dt\right]\int_{B(y_0, 1/2)^c} \frac{f_r(z)}{|z-y_0|^{d+\alpha}}dz .
\end{split}
\end{equation}
Simple changes of variables give
\begin{align}
\label{eq:fr}
\int_{B(y_0, 1/2)^c} \frac{f_r(z)}{|z-y_0|^{d+\alpha}}dz = r^\alpha \int_{B(ry_0, r/2)^c} \frac{f(z)}{|z-ry_0|^{d+\alpha}}dz
\,
\end{align}
and 
\begin{align*}
  \int_0^{\tau_{B(y_0,1)}} \exp\left(-\int_0^t q_r(X_s)ds\right)dt 
& = \int_0^{\tau_{B(y_0,1)}} \exp\left(-\int_0^{r^\alpha t} q(rX_{r^{-\alpha}s})ds\right)dt                            \\
& = r^{-\alpha} \int_0^{r^\alpha \tau_{r^{-1}B(ry_0,r)}} \exp\left(-\int_0^t q(rX_{r^{-\alpha}s})ds\right)dt 
\, .
\end{align*}
Furthermore, by this and \eqref{eq:distr},
\begin{align*}
\ex^y \left[\int_0^{\tau_{B(y_0,1)}} \exp\left(-\int_0^t q_r(X_s)ds\right)dt\right] 
& = r^{-\alpha} \ex^y \left[\int_0^{r^\alpha \tau_{r^{-1}B(ry_0,r)}} \exp\left(-\int_0^t q(rX_{r^{-\alpha}s})ds\right)dt \right] \\
& = r^{-\alpha} \ex^{ry} \left[\int_0^{\tau_{B(ry_0,r)}} \exp\left(-\int_0^t q(X_s)ds\right)dt \right]
\, .
\end{align*}
Recalling that $x = ry$ and $x_0 = r y_0$, by \eqref{eq:unit} and \eqref{eq:fr}, we conclude that
\begin{align*}
C^{-1} \ex^x & \left[\int_0^{\tau_{B(x_0,r)}} \exp\left(-\int_0^t q(X_s)ds\right)dt\right] \int_{B(x_0, r/2)^c} \frac{f(z)}{|z-x_0|^{d+\alpha}}dz \leq f(x) \\
& \leq C  \ex^x\left[\int_0^{\tau_{B(x_0,r)}} \exp\left(-\int_0^t q(X_s)ds\right)dt\right]\int_{B(x_0, r/2)^c} \frac{f(z)}{|z-x_0|^{d+\alpha}}dz
\, 
\end{align*}
for $x \in B(x_0, r/2)$.
\end{proof}

Under the assumptions of Theorem \ref{th:bhi} we obtain the following corollary. It will be very important step in the proof of the characterization of IU.

\begin{corollary}
\label{cor:bhisc}
Let $q \in L^\infty_{\loc}$, $q \geq 0$. Assume that there is $R > 0$ such that $q(x) \geq 1$ for $|x| \geq R$. Then there exists a constant $C$ such that if $r > 0$, $x_0 \in \Rd$, $|x_0| - r \ge R$ and $f(x) = \ex^x[e_q(\tau_{B(x_0,r)})f(X_{\tau_{B(x_0,r)}})]$ for $x \in B(x_0,r)$, $f \ge 0$ then we have
\begin{align}
\label{eq:bhisc}
   f(x) &
   \le
    C  \int_{B(x_0,\frac{r}{2})^c}\frac{f(y)}{|y-x_0|^{d+\alpha}}dy
  \, , &  x \in B\left(x_0,\frac{r}{2}\right)
\, .
\end{align}
\end{corollary}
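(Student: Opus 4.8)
The plan is to deduce \eqref{eq:bhisc} directly from the upper bound in Theorem~\ref{th:bhi} by observing that, under the present hypotheses, the factor $v_{B(x_0,r)}(x)$ occurring there is bounded above by an absolute constant.

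First I would note that the condition $|x_0|-r\ge R$ forces $B(x_0,r)\subset\{y:|y|\ge R\}$, so $q(y)\ge 1$ for every $y\in B(x_0,r)$. Hence, for $x\in B(x_0,r)$ and $0\le t<\tau_{B(x_0,r)}$, the path satisfies $X_s\in B(x_0,r)$ for $s<t$, so $\int_0^t q(X_s)\,ds\ge t$ and therefore $e_q(t)\le e^{-t}$. Consequently
\[
v_{B(x_0,r)}(x)=\ex^x\!\left[\int_0^{\tau_{B(x_0,r)}}e_q(t)\,dt\right]\le\ex^x\!\left[\int_0^{\infty}e^{-t}\,dt\right]=1,
\qquad x\in B(x_0,r).
\]

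Now I would apply the upper estimate in \eqref{eq:bhi} with $D=B(x_0,r)$ (legitimate, since $f$ is by assumption nonnegative and regular $q$-harmonic in $B(x_0,r)$) and simply discard the factor $v_{B(x_0,r)}(x)\le 1$: for $x\in B(x_0,r/2)$,
\[
f(x)\le C\,v_{B(x_0,r)}(x)\int_{B(x_0,r/2)^c}\frac{f(y)}{|y-x_0|^{d+\alpha}}\,dy\le C\int_{B(x_0,r/2)^c}\frac{f(y)}{|y-x_0|^{d+\alpha}}\,dy,
\]
which is \eqref{eq:bhisc}. Since the constant $C$ in Theorem~\ref{th:bhi} does not depend on $r$, $x_0$, or $q$, and the bound $v_{B(x_0,r)}\le 1$ is universal, the resulting constant inherits this independence.

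There is no genuine obstacle here; the two points needing a word of care are the inclusion $B(x_0,r)\subset\{|y|\ge R\}$ (this is exactly where the hypothesis $|x_0|-r\ge R$ enters) and the ensuing uniform bound $v_{B(x_0,r)}\le 1$, both immediate. All the substance is in Theorem~\ref{th:bhi}; the corollary merely records the simplification that occurs once the ball lies deep enough in the region where the potential is bounded below by $1$.
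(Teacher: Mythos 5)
Your proof is correct and follows essentially the same route as the paper: observe that $|x_0|-r\ge R$ places $B(x_0,r)$ in the region where $q\ge 1$, deduce $e_q(t)\le e^{-t}$ for $t<\tau_{B(x_0,r)}$ and hence $v_{B(x_0,r)}\le 1$, and then discard that factor from the upper bound in Theorem~\ref{th:bhi}. This matches the paper's argument exactly.
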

\begin{proof}
By condition $|x_0| - r \geq R$ we have that $q \geq 1$ on $B(x_0,r)$. The desired inequality is a simple consequence of \eqref{eq:bhi} and the following estimate
\begin{align*}
\ex^x \left[\int_0^{\tau_{B(x_0,r)}} \exp\left(-\int_0^t q(X_s)ds\right)dt\right] 
\leq \ex^x \left[\int_0^{\tau_{B(x_0,r)}} e^{-t}dt\right]
\leq \int_0^\infty e^{-t} dt = 1
\, .
\end{align*}
\end{proof}

\begin{lemma}
\label{lm:supharm}
Let $q \in L_{loc}^{\infty}$, $q \geq 0$. For each fixed $t>0$ the function $T_t\chi_{\R^d}(x)$ is $q$-superharmonic in every open set $D \subset \R^d$.  
\end{lemma}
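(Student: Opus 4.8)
The plan is to verify the defining inequality \eqref{def:superharm} directly, using the Markov property of $X_t$ together with the multiplicative structure of the Feynman-Kac functional $e_q$. Fix $t > 0$, an open set $D \subset \R^d$, and a bounded open set $U$ with $\overline{U} \subset D$. I must show $T_t \chi_{\R^d}(x) \ge \ex^x[e_q(\tau_U) T_t\chi_{\R^d}(X_{\tau_U})]$ for $x \in U$. Since $U$ is bounded, $\ex^x \tau_U < \infty$, so $\tau_U < \infty$ $\pr^x$-a.s. The main step is to split the Feynman-Kac expectation defining $T_t\chi_{\R^d}(x) = \ex^x[e_q(t)]$ at the (random) time $\tau_U$, distinguishing the events $\{\tau_U \le t\}$ and $\{\tau_U > t\}$.

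On $\{\tau_U \le t\}$, write $e_q(t) = e_q(\tau_U)\exp\bigl(-\int_{\tau_U}^t q(X_s)\,ds\bigr)$; by the strong Markov property at $\tau_U$ and a shift of the time variable, $\ex^x[\tau_U \le t;\, e_q(t)] = \ex^x\bigl[\tau_U \le t;\, e_q(\tau_U)\,\ex^{X_{\tau_U}}[e_q(t-u)]\big|_{u=\tau_U}\bigr]$, which equals $\ex^x[\tau_U \le t;\, e_q(\tau_U) T_{t-\tau_U}\chi_{\R^d}(X_{\tau_U})]$. On $\{\tau_U > t\}$, simply bound $e_q(t) \ge e_q(\tau_U) \cdot 1 \ge e_q(\tau_U) T_{t-0}\chi_{\R^d}(X_{\tau_U})$ — more carefully, since $q \ge 0$ the functional $e_q$ is nonincreasing in its argument, hence $e_q(t) \ge e_q(\tau_U)$ on this event, and also $T_s\chi_{\R^d}(y) \le 1$ for every $s \ge 0$, $y \in \R^d$, by Lemma \ref{lm:kernel}(i) (since $P_s\chi_{\R^d} = 1$). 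Therefore
\begin{align*}
T_t\chi_{\R^d}(x)
&= \ex^x[\tau_U > t;\, e_q(t)] + \ex^x[\tau_U \le t;\, e_q(\tau_U) T_{t-\tau_U}\chi_{\R^d}(X_{\tau_U})] \\
&\ge \ex^x[\tau_U > t;\, e_q(\tau_U) T_{t-\tau_U}\chi_{\R^d}(X_{\tau_U})] + \ex^x[\tau_U \le t;\, e_q(\tau_U) T_{t-\tau_U}\chi_{\R^d}(X_{\tau_U})] \\
&= \ex^x[e_q(\tau_U) T_{t-\tau_U}\chi_{\R^d}(X_{\tau_U})]
\, ,
\end{align*}
where on $\{\tau_U > t\}$ one uses $T_{t-\tau_U}\chi_{\R^d}$ interpreted as $T_0\chi_{\R^d} = \chi_{\R^d} = 1$ (or, cleaner, one keeps that term as $\ex^x[\tau_U > t;\, e_q(t)]$ and bounds it below by $\ex^x[\tau_U > t;\, e_q(\tau_U)]$ directly, since $T_{t-\tau_U}\chi_{\R^d}(X_{\tau_U}) \le 1$). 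Finally, again because $T_s\chi_{\R^d} \le 1$ and $e_q(\tau_U) \ge 0$, we have $T_{t-\tau_U}\chi_{\R^d}(X_{\tau_U}) \le \chi_{\R^d}(X_{\tau_U}) = 1$ is the wrong direction; instead one wants the lower bound $T_{t-\tau_U}\chi_{\R^d}(X_{\tau_U}) \ge $ something — so I replace the last line's integrand using that $T_{t-s}\chi_{\R^d} \ge T_t \chi_{\R^d}$ pointwise for $s \ge 0$, which holds because $r \mapsto T_r\chi_{\R^d}(y)$ is nonincreasing (semigroup property: $T_r\chi_{\R^d} = T_{r-s}(T_s\chi_{\R^d}) \le T_{r-s}\chi_{\R^d}$ as $T_s\chi_{\R^d} \le \chi_{\R^d}$ and $T_{r-s}$ is positivity-preserving). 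Hence $\ex^x[e_q(\tau_U) T_{t-\tau_U}\chi_{\R^d}(X_{\tau_U})] \ge \ex^x[e_q(\tau_U) T_t\chi_{\R^d}(X_{\tau_U})]$, completing the argument.

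The only genuinely delicate points are the application of the strong Markov property to factor $e_q(t)$ at $\tau_U$ (standard, since $\tau_U$ is a stopping time and $q \ge 0$ is locally bounded so all integrals converge), and keeping the direction of the monotonicity inequalities straight: the two facts needed are that $s \mapsto e_q(s)$ is nonincreasing and that $s \mapsto T_s\chi_{\R^d}(y)$ is nonincreasing, both consequences of $q \ge 0$ via Lemma \ref{lm:kernel}. I also note that the absolute convergence of the expectation in \eqref{def:superharm} is immediate here, as the integrand is bounded by $1$. The main obstacle, such as it is, is purely bookkeeping: organizing the split at $\tau_U$ so that the contributions from $\{\tau_U > t\}$ and $\{\tau_U \le t\}$ recombine cleanly into $\ex^x[e_q(\tau_U)\,T_t\chi_{\R^d}(X_{\tau_U})]$ after the monotonicity bounds.
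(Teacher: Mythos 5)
Your proof is correct, but it is organized differently from the paper's. The paper's proof is a one-line time-shift: since $q \ge 0$ and $\tau_U \ge 0$, one has $e_q(t) \ge e_q(t+\tau_U)$ pathwise, and then the strong Markov property at $\tau_U$ factors $\ex^x[e_q(t+\tau_U)]$ directly into $\ex^x[e_q(\tau_U)\,\ex^{X_{\tau_U}}[e_q(t)]] = \ex^x[e_q(\tau_U)\,T_t\chi_{\R^d}(X_{\tau_U})]$, with no case-splitting and no auxiliary facts. You instead split on the events $\{\tau_U \le t\}$ and $\{\tau_U > t\}$, apply the strong Markov property at the original horizon $t$ (getting the random-time operator $T_{t-\tau_U}$ on the first event), and then need two additional observations to recombine: that $e_q(t) \ge e_q(\tau_U)$ on $\{\tau_U > t\}$, and that $s \mapsto T_s\chi_{\R^d}(y)$ is nonincreasing so that $T_{t-\tau_U}\chi_{\R^d} \ge T_t\chi_{\R^d}$ when $\tau_U \le t$. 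Both ingredients are correct (the monotonicity of $T_s\chi_{\R^d}$ indeed follows from the semigroup property and $T_s\chi_{\R^d} \le 1$, or even more directly from the pathwise monotonicity of $e_q$), and your final chain of inequalities is sound. The trade-off is that the paper's shift $t \mapsto t+\tau_U$ makes the target time argument $t$ appear automatically after the strong Markov step, eliminating both the case split and the need for monotonicity of the semigroup applied to $\chi_{\R^d}$; your version reaches the same conclusion at the cost of a bit more bookkeeping. You might streamline your write-up by noticing that pathwise monotonicity of $s \mapsto e_q(s)$ (a consequence of $q \ge 0$) is the single fact powering both of your case estimates, and that this same fact, applied as $e_q(t) \ge e_q(t+\tau_U)$, yields the paper's shorter argument.
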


\begin{proof}
Fixed $t>0$ and $D \subset \R^d$. Let $U$ be an arbitrary open bounded subset of $D$ such that $\overline{U} \subset D$. 
By a simple change of variables and the strong Markov property, we have
\begin{align*}
T_t\chi_{\R^d}(x) & = \ex^x[e_q(t)] \geq  \ex^x[e_q(t+\tau_U)] = \ex^x\left[e_q(\tau_U) e^{-\int_{\tau_U}^{t+\tau_U} q(X_s)ds}\right]              \\
                             & =    \ex^x\left[e_q(\tau_U) e^{-\int_0^t q(X_{s+\tau_U})ds}\right]                     
                               =    \ex^x\left[e_q(\tau_U) \ex^{X_{\tau_U}}[e_q(t)]\right] \\
                             & =    \ex^x\left[e_q(\tau_U) T_t\chi_{\R^d}(X_{\tau_U})\right]
\, , & x \in U
\, . 
\end{align*}
\end{proof}

\section{Compactness of $T_t$}

The following lemma gives the simple characterization of the compactness of $T_t$. 

\begin{lemma}
\label{lm:cmpct1}
Let $q \in L_{loc}^{\infty}$, $q \geq 0$ and $t>0$. Then the operator $T_t$ is compact if and only if $T_t\chi_{\R^d}(x) \rightarrow 0$ as $|x| \rightarrow \infty$.
\end{lemma}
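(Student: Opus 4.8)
The plan is to prove the two implications separately. The forward implication ($T_t\chi_{\R^d}(x)\to 0\Rightarrow T_t$ compact) is a truncation argument. Let $M_R$ denote multiplication by $\chi_{B(0,R)}$. Since $u(t,\cdot,\cdot)$ is bounded (Lemma \ref{lm:kernel}), $M_RT_tM_R$ has a square-integrable kernel on $\R^d\times\R^d$, hence is Hilbert--Schmidt and in particular compact. As a norm limit of compact operators is compact, it suffices to show $\|T_t-M_RT_tM_R\|_{L^2\to L^2}\to 0$ as $R\to\infty$. Writing $T_t-M_RT_tM_R=(I-M_R)T_t+M_RT_t(I-M_R)$ and using that $T_t$ is self-adjoint (Lemma \ref{lm:kernel}(iv)), this reduces to estimating $\|(I-M_R)T_t\|$. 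The one computation needed is the pointwise Cauchy--Schwarz bound
$$
|T_tf(x)|^2\le T_t\chi_{\R^d}(x)\int_{\R^d}u(t,x,y)|f(y)|^2\,dy,
$$
which, together with $\int_{\R^d}u(t,x,y)\,dx=T_t\chi_{\R^d}(y)\le 1$ (from symmetry of $u$ and $u\le p$), gives $\|(I-M_R)T_tf\|_2^2\le\bigl(\sup_{|x|\ge R}T_t\chi_{\R^d}(x)\bigr)\|f\|_2^2$; the supremum tends to $0$ by hypothesis.

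For the converse, assume $T_t$ is compact and fix a sequence $x_n$ with $|x_n|\to\infty$; I want $T_t\chi_{\R^d}(x_n)\to 0$. The first step is to deduce that $T_{t/2}$ is compact: since $T_{t/2}$ is self-adjoint and nonnegative, $T_t=T_{t/2}^*T_{t/2}$, so for any bounded sequence $f_n\rightharpoonup 0$ one has $\|T_{t/2}f_n\|^2=\langle T_tf_n,f_n\rangle\to 0$, which is compactness of $T_{t/2}$. The second step is to set $\phi_n(y)=u(t/2,y,x_n)$; by $u\le p$ (Lemma \ref{lm:kernel}(v)) we have $\phi_n\in L^2(\R^d)$ with $\|\phi_n\|_2\le\|p(t/2,\cdot)\|_2$, and for $\psi\in L^2(\R^d)$, $\langle\phi_n,\psi\rangle=T_{t/2}\psi(x_n)\to 0$: this is clear for $\psi$ with compact support because $\int_K p(t/2,y-x_n)\,dy\to 0$, and follows in general by density together with the uniform bound on $\|\phi_n\|_2$. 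Hence $\phi_n\rightharpoonup 0$, so compactness of $T_{t/2}$ gives $T_{t/2}\phi_n\to 0$ in $L^2$; and $T_{t/2}\phi_n(x)=u(t,x,x_n)$ by the Chapman--Kolmogorov identity for $u$, so $\|u(t,\cdot,x_n)\|_2\to 0$. The last step is the elementary split, valid for every $\rho>0$,
$$
T_t\chi_{\R^d}(x_n)=\int_{B(x_n,\rho)}u(t,y,x_n)\,dy+\int_{B(x_n,\rho)^c}u(t,y,x_n)\,dy\le|B(0,\rho)|^{1/2}\|u(t,\cdot,x_n)\|_2+\int_{B(0,\rho)^c}p(t,z)\,dz,
$$
from which $\limsup_n T_t\chi_{\R^d}(x_n)\le\int_{B(0,\rho)^c}p(t,z)\,dz$ for all $\rho$, and letting $\rho\to\infty$ finishes the argument.

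The main obstacle is the converse direction. The naive approach --- pairing $u(t/2,\cdot,x_n)$ against $T_{t/2}\chi_{\R^d}$ --- fails because $T_{t/2}\chi_{\R^d}$ need not be in $L^2$ (we do not assume $q(x)\to\infty$ here). The remedy is to route through compactness of $T_{t/2}$, obtained from the factorization $T_t=T_{t/2}^2$, and then to convert the $L^2$-smallness of $u(t,\cdot,x_n)$ into the $L^1$-smallness $\|u(t,\cdot,x_n)\|_1=T_t\chi_{\R^d}(x_n)$ using the pointwise domination $u\le p$ and the tightness of $p(t,\cdot)$. That factorization step and the $L^2$-to-$L^1$ passage are the crux; everything else is routine.
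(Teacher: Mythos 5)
Your proof is correct. The forward direction is essentially the same as the paper's: truncate the kernel to get a Hilbert--Schmidt operator and control the error by a Cauchy--Schwarz argument that isolates $\sup_{|x|\ge R}T_t\chi_{\R^d}(x)$ (the paper truncates only in one variable, kernel $u(t,x,y)\chi_{B(0,r)}(y)$, and estimates $\|T_t-V_{r,t}\|$ directly, which is marginally shorter; you truncate in both variables and split via self-adjointness, which is equivalent). The converse direction is genuinely different. The paper argues by contradiction, following Kwa\'snicki's Lemma 1: assuming $T_t\chi_{\R^d}(x_n)\ge M>0$, it uses Chapman--Kolmogorov and the tightness of $p(t/2,\cdot)$ to show $\int_{B(x_n,r)}T_{t/2}\chi_{B(x_n,r)}\,dz$ is bounded below, then feeds the orthogonal bounded sequence $\chi_{B(x_n,r)}$ into $T_t$ and derives a contradiction with compactness via a reverse Cauchy--Schwarz lower bound on $L^2$-norms over disjoint balls. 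You instead argue directly: factor $T_t=T_{t/2}^2$ to deduce compactness of $T_{t/2}$, observe $\phi_n=u(t/2,\cdot,x_n)$ is a bounded, weakly null sequence in $L^2$, conclude $\|u(t,\cdot,x_n)\|_2=\|T_{t/2}\phi_n\|_2\to 0$, and then upgrade to $L^1$-decay $T_t\chi_{\R^d}(x_n)\to 0$ by splitting at radius $\rho$ and using $u\le p$ with tightness. Your route avoids the contradiction and the subsequence bookkeeping (disjoint balls, convergent subsequences), making the structure cleaner and more modular; the paper's route is more elementary in the sense of not invoking the weak-to-strong characterization of compactness, and it parallels the reference it cites. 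Both are sound; yours is a legitimate alternative.
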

\begin{proof}
Let $t > 0$ be fixed. We first assume that $\lim_{|x| \rightarrow \infty}T_t\chi_{\R^d}(x)=0$.  Let $(V_{r,t})$, $r >0$, be the family of operators given by kernels $v_r(t,x,y)=u(t,x,y)\chi_{B(0,r)}(y)$, that is $V_{r,t}f(x) = \int_{\R^d}v_r(t,x,y)f(y)dy$, $f \in L^2(\R^d)$. We have
\begin{align*}
\int_{\R^d} \int_{\R^d} (v_r(t,x,y))^2 dxdy & =    \int_{B(0,r)} \int_{\R^d} (u(t,x,y))^2 dxdy 
                                            \leq \int_{B(0,r)} \int_{\R^d} (p(t,y-x))^2 dxdy       \\
                                          & \leq C_t \int_{B(0,r)} \int_{\R^d} p(t,y-x) dxdy = C_t |B(0,r)| < \infty                                                       
\, .
\end{align*} 
Hence $V_{r,t}$ is the Hilbert-Schmidt operator, so it is compact. Furthermore, by the Cauchy-Schwarz inequality, we have
\begin{align*}
\left\|T_tf - V_{r,t}f \right\|_2^2 & =    \int_{\R^d} \left|\int_{B(0,r)^c}u(t,x,y)f(y)dy\right|^2 dx 
                                  \leq \int_{\R^d} \int_{B(0,r)^c}u(t,x,y)\left|f(y)\right|^2dy dx \\
                                & \leq \left\|f\right\|_2^2 \sup_{y \in B(0,r)^c} T_t\chi_{\R^d}(y)
\, .
\end{align*} 
It follows that we can aproximate $T_t$ by compact operators $V_{r,t}$ in the operator norm. Thus $T_t$ is compact.

Now we prove the opposite implication. We follow the idea from proof of \cite[Lemma 1]{bib:Kw}. Fix $t > 0$. Assume that $T_t$ is compact. Suppose that for some sequence $\left\{x_n\right\}_{n=1}^\infty$ such that $x_n \rightarrow \infty$ we have $T_t \chi_{\R^d}(x_n) \geq M >0$. Take $r >0$ large enough, so that $\int_{B(0,\frac{r}{2})^c} p(\frac{t}{2},y)dy < \frac{M}{4}$. We have
\begin{align*}
T_t \chi_{\R^d}(x_n) = \int_{\R^d} \int_{\R^d} u\left(\frac{t}{2},x_n,z\right)u\left(\frac{t}{2},z,y\right)dzdy
\, 
\end{align*} 
and
\begin{align*}
\int_{B(x_n,r)} \int_{B(x_n,\frac{r}{2})} u\left(\frac{t}{2},x_n,z\right)u\left(\frac{t}{2},z,y\right)dzdy 
             \leq C_t \int_{B(x_n,r)} T_{\frac{t}{2}}\chi_{B(x_n,r)}(z)dz
\, ,
\end{align*} 
\begin{align*}
\int_{B(x_n,r)^c} \int_{B(x_n,\frac{r}{2})} & u\left(\frac{t}{2},x_n,z\right)u\left(\frac{t}{2},z,y\right)dzdy  \\
             & \leq \int_{B(x_n,\frac{r}{2})}u\left(\frac{t}{2},x_n,z\right) \int_{B(z,\frac{r}{2})^c} p\left(\frac{t}{2},y-z\right) dydz 
               < \frac{M}{4}
\, ,
\end{align*} 
\begin{align*}
\int_{\R^d} \int_{B(x_n,\frac{r}{2})^c} u\left(\frac{t}{2},x_n,z\right)u\left(\frac{t}{2},z,y\right)dzdy 
             \leq \int_{B(x_n,\frac{r}{2})^c}p\left(\frac{t}{2},z-x_n\right) dz < \frac{M}{4}
\, .
\end{align*} 
Thus $T_t\chi_{\R^d}(x_n) < C_t \int_{B(x_n,r)} T_{\frac{t}{2}}\chi_{B(x_n,r)}(z)dz +\frac{M}{2}$. 

By taking a subsequence of $\left\{x_n\right\}$ if necessary, we may and do assume that $B(x_n,r)$ are pairwise disjoint. Hence $(\chi_{B(x_n,r)})_{n>0}$ is the orthogonal sequence of uniformly bounded functions in $L^2(\R^d)$. Moreover, the Schwarz inequality gives 
\begin{align*}
\int_{B(x_n,r)} \left(T_{\frac{t}{2}}\chi_{B(x_n,r)}(z)\right)^2 dz 
               & \geq \frac{1}{|B(x_n,r)|}\left(\int_{B(x_n,r)} T_{\frac{t}{2}}\chi_{B(x_n,r)}(z) dz\right)^2 \\
               & \geq \frac{1}{C^2_t|B(x_n,r)|}\left(T_t \chi_{\R^d}(x_n) - \frac{M}{2}\right)^2 \geq \frac{M^2}{4C^2_t|B(x_n,r)|}               
\, .  
\end{align*} 
By compactness of $T_t$, we can choose the subsequence of $T_t \chi_{B(x_n,r)}$ convergent in $L^2$-norm to some function $f \in L^2(\R^d)$. Thus for infinitely many $n$
\begin{align*}
\left(\int_{B(x_n,r)} \left(f(z)\right)^2 dz \right)^{1/2} 
               & \geq \left(\int_{B(x_n,r)} \left(T_t\chi_{B(x_n,r)}(z)\right)^2 dz \right)^{1/2}            \\
               & -    \left(\int_{B(x_n,r)} \left(f(z) - T_t\chi_{B(x_n,r)}(z)\right)^2 dz \right)^{1/2}     \\
               & \geq \frac{M}{2C_t\sqrt{|B(x_n,r)|}} - \left\|f - T_t\chi_{B(x_n,r)}\right\|_2 > \frac{M}{4C_t\sqrt{|B(x_n,r)|}}    
\, .  
\end{align*} 
This gives a contradiction. Hence $T_t\chi_{\R^d}(x) \rightarrow 0$ as $|x| \rightarrow \infty$.
\end{proof}

\begin{proof}[Proof of Lemma \ref{lm:compact}]
For any $x \in \R^d$ let us put $D=B(x,1)$. For any $t>0$ we have
\begin{align*}
 T_t\chi_{\R^d}(x) & = \ex^x[e_q(t)] =    \ex^x\left[\tau_D \geq t;e^{-\int_0^t q(X_s) ds} \right]   
                                  +    \ex^x\left[\tau_D < t;e^{-\int_0^t q(X_s) ds} \right]     \\
                  &               \leq e^{-t \inf_{y \in D} q(y)} + \ex^x\left[e^{-\int_0^{\tau_D} q(X_s) ds} \right]  
                                  \leq e^{-t \inf_{y \in D} q(y)} + \ex^0\left[e^{-\inf_{y \in D} q(y) \tau_{B(0,1)}} \right]
\, .
\end{align*} 
Since $q(x) \rightarrow \infty$ as $|x| \rightarrow \infty$, we obtain $\lim_{|x| \rightarrow \infty}T_t\chi_{\R^d}(x)=0$.
Now the assertion of the lemma follows from Lemma \ref{lm:cmpct1}.
\end{proof}

\section{Intrinsic ultracontractivity of $T_t$}

\begin{proof}[Proof of Theorem \ref{th:eig}]
We first prove the upper bound. For $|x| < 3$ and $D=B(x,1)$, by formula \eqref{eq:pot3} and estimate \eqref{eq:est}, we have 
\begin{align*}
\varphi_1(x) \leq \left\|\varphi_1\right\|_\infty (\lambda_1 v_D(x) + u_D(x) )\leq  C_q v_D(x) \leq C_q v_D(x) (1+|x|)^{-d-\alpha}
\, .
\end{align*}

Let now $|x| \ge 3$. Putting $r=\frac{|x|}{2}$ and $D=B(x,1)$, by \eqref{eq:pot3} and \eqref{eq:IWF}, we have 
\begin{align*}
 \varphi_1(x) & =    \lambda_1 \int_D V_D(x,y)\varphi_1(y)dy + \ex^x[X_{\tau_D} \in D^c \cap B(x,r); e_q(\tau_D) \varphi_1(X_{\tau_D})]  \\
              & +    \ex^x[X_{\tau_D} \in B(x,r)^c; e_q(\tau_D) \varphi_1(X_{\tau_D})]                           
                \le  \lambda_1 v_D(x) \sup_{y \in B(x,r)}\varphi_1(y)  +  u_D(x) \sup_{y \in B(x,r)}\varphi_1(y)                         \\           
              & +    \cA \int_D V_D(x,y) \int_{B(x,r)^c} \varphi_1(z) |z-y|^{-d-\alpha}dzdy            
\, .
\end{align*}
From \eqref{eq:est} we obtain
\begin{align*}
\varphi_1(x)  & \le  \lambda_1 v_D(x) \sup_{y \in B(x,r)}\varphi_1(y)  + C v_D(x) \sup_{y \in B(x,r)}\varphi_1(y)                         \\           
              & +    \cA \int_D V_D(x,y)dy \int_{B(x,r)^c} \varphi_1(z) |z-x|^{-d-\alpha}dz                                            \\
              & \le C_q v_D(x)\left(\sup_{y \in B(x,r)}\varphi_1(y) +  \int_{B(x,r)^c} \varphi_1(z) |z-x|^{-d-\alpha}dz\right)
\, .
\end{align*}
By Lemma \ref{lm:cruc} applied to $f=\varphi_1$, we get $\varphi_1(x) \leq C_q v_D(x)|x|^{-d-\alpha}$ for $|x| \geq 3$. The upper bound of the theorem is proved.

To show the lower bound we use \eqref{eq:pot3} again. Let $|x| \leq 2$ and $D=B(x,1)$. We have $\varphi_1(x) \geq \lambda_1 v_D(x) \inf_{y \in B(0,3)} \varphi_1(y) \geq C_q v_D(x) (1 + |x|)^{-d-\alpha}$.

Let now $|x| > 2$ and $D=B(x,1)$. By (\ref{eq:pot3}) and \eqref{eq:IWF} we have
\begin{align*}
\varphi_1(x) & \geq  \ex^x[e_q(\tau_D) \varphi_1(X_{\tau_D})] = C \int_D V_D(x,y) \int_{D^c} \varphi_1(z) |z-y|^{-d-\alpha}dzdy  \\
             & \geq  C \int_D V_D(x,y) \int_{B(0,1)} \varphi_1(z) |z-y|^{-d-\alpha}dzdy \geq C_q v_D(x) |x|^{-d-\alpha}
  \,  .
\end{align*}

Clearly, $v_D(x) \leq \int_{\R^d} V(x,y)dy$. By Lemma \ref{lm:bounded} $\left\|V\chi_{\R^d}\right\|_\infty < \infty$. Then, from \eqref{eq:pot1} for $D' = \Rd$, $f = \chi_{\Rd}$ and \eqref{eq:est}, we obtain $\int_{\R^d} V(x,y)dy  \leq C_q v_D(x)$.
\end{proof}

\begin{proof}[Proof of Theorem \ref{th:char}]
The condition (i) implies (ii) by definition of IU and the upper bound of Theorem \ref{th:eig}.

By (ii) we have
\begin{align*}
\ex^x[t<\tau_{\overline{B}(0,r)^c}; e_q(t)] & \leq  \ex^x[X_t \in \overline{B}(0,r)^c; e_q(t)] \\
                                          & =     \int_{\overline{B}(0,r)^c} u(t,x,y) dy \leq C_{q,t} (1+|x|)^{-d-\alpha} \leq C_{q,t} (1+r)^{-d-\alpha}
\,  ,
\end{align*}
for $x \in \overline{B}(0,r)^c$. Thus (iii) is proved. 

Now we prove the implication (iii) $\Rightarrow$ (iv). Let $R > 1$ be large enough, so that $q(x) \geq 1$ for $|x| \geq R$. Let $|x| \geq 2R$, $r=|x|/2$ and $D = B(x,r)$. By condition (iii) and the strong Markov property, we have 
\begin{equation}
\label{eq:Tt}
\begin{split}
T_t\chi_{\R^d}(x) & =    \ex^x\left[\frac{t}{2}<\tau_D; e_q(t)\right] + \ex^x\left[\frac{t}{2} \geq \tau_D; e_q(t)\right]   \\
   & \leq \ex^x\left[\frac{t}{2}<\tau_{\overline{B}(0,r)^c}; e_q\left(\frac{t}{2}\right)\right] 
     + \ex^x\left[e_q(\tau_D) \ex^{X_{\tau_D}} \left[ e_q\left(\frac{t}{2}\right)\right]\right]   \\
   & \leq C_{q,t} (1 + |x|)^{-d-\alpha} + \ex^x\left[e_q(\tau_D) T_{\frac{t}{2}}\chi_{\R^d}(X_{\tau_D})\right]
\end{split}
\end{equation}
We need to estimate the last expectation. Put
\begin{equation*}
\label{def2}
f(y) = 
\begin{cases}
\ex^y\left[e_q(\tau_D) T_{\frac{t}{2}}\chi_{\R^d}(X_{\tau_D})\right]		 &	\text{for} \quad y \in D,      \\
T_{\frac{t}{2}}\chi_{\R^d}(y)																			       &  \text{for} \quad y \in D^c .   	
\end{cases}
\end{equation*}
Then 
\begin{align*}
f(y) & = \ex^y\left[e_q(\tau_D) f(X_{\tau_D})\right]
\, , & y \in D
\, ,
\end{align*}
and from \eqref{eq:bhisc} and $q$-superharmonicity of function $T_{\frac{t}{2}}\chi_{\R^d}(y)$ (see Lemma \ref{lm:supharm}), we obtain 
\begin{align}
\label{eq:f}
f(z) & \leq C \int_{B(x,\frac{r}{2})^c}\frac{f(y)}{|y-x|^{d+\alpha}}dy  
       \leq C \int_{B(x,\frac{r}{2})^c}\frac{T_{\frac{t}{2}}\chi_{\R^d}(y)}{|y-x|^{d+\alpha}}dy
\, , & z \in B(x,r/2)
\, .
\end{align}
Consequently, by \eqref{eq:Tt} and \eqref{eq:f}, we have
\begin{align}
\label{eq:Tt1}
T_t\chi_{\R^d}(x)  \leq C_{q,t} (1 + |x|)^{-d-\alpha} + C \int_{B(x,\frac{r}{2})^c}\frac{T_{\frac{t}{2}}\chi_{\R^d}(y)}{|y-x|^{d+\alpha}}dy
\, .
\end{align}
Suppose now that for some $\gamma \geq 0$, $\gamma \neq d$, we have $T_t\chi_{\R^d}(x) \leq C_{q,t,\gamma} (1 + |x|)^{-\gamma}$ for all $x \in \R^d$, $t>0$. It is clear for $\gamma = 0$. Then, from \eqref{eq:Tt1} and \eqref{eq:cruest}, we obtain 
\begin{align}
\label{eq:Tt2}
T_t\chi_{\R^d}(x)  \leq C_{q,t} (1 + |x|)^{-d-\alpha} + C_{q,t,\gamma} \int_{B(x,\frac{r}{2})^c}(1 + |y|)^{-\gamma}|y-x|^{-d-\alpha}dy 
                   \leq C_{q,t,\gamma}(1 + |x|)^{-\gamma^{'}}
\, 
\end{align}
for $\gamma^{'} = \min(\gamma +\alpha, d+\alpha)$ and $|x| \geq 2R$. Clearly, we also have $T_t\chi_{\R^d}(x) \leq C_{q,t,\gamma}(1 + |x|)^{-\gamma^{'}}$ for $|x| \leq 2R$. 

Now, starting from \eqref{eq:Tt1} again and taking $\gamma = \gamma^{'}$ in \eqref{eq:Tt2}, we get the estimates \eqref{eq:Tt2} with new, larger $\gamma^{'}$. By using this argument recursively, we can improve the degree of estimate $T_t\chi_{\R^d}(x) \leq C_{q,t,\gamma}(1 + |x|)^{-\gamma^{'}}$. If it happens that $\gamma^{'} = d$ after some step, then we take $\gamma = d - \frac{\alpha}{2}$ in the next one. Applying this argument, after $\left\lfloor 2 + \frac{d}{\alpha}\right\rfloor$ steps we obtain that $T_t\chi_{\R^d}(x) \leq C_{q,t}(1 + |x|)^{-d-\alpha}$ for all $x \in \R^d$.

To complete the proof of the theorem we prove the implication (iv) $\Rightarrow$ (i). By the inequality
\begin{align*}
u(t,x,y) = \int_{\R^d} \int_{\R^d} u\left(\frac{t}{3},x,z\right)u\left(\frac{t}{3},z,v\right)u\left(\frac{t}{3},v,y\right)dvdz 
           \leq C_t T_{\frac{t}{3}}\chi_{\R^d}(x) T_{\frac{t}{3}}\chi_{\R^d}(y)
\,  ,
\end{align*}
it suffices to show that $T_t\chi_{\R^d}(x) \leq C_{q,t} \varphi_1(x)$ for $x \in \R^d$ and $t > 0$.

Let $|x|>3$, $D=B(x,1)$ and $r = \frac{|x|}{2}$. We have
\begin{align}
\label{eq:sem}
T_t\chi_{\R^d}(x) = \ex^x\left[\frac{t}{2}<\tau_D; e_q(t)\right] + \ex^x\left[\frac{t}{2} \geq \tau_D; e_q(t)\right]
\, .
\end{align}
We start by estimating the first expected value in \eqref{eq:sem}. By the Markov property, we have
\begin{align*}
\ex^x\left[\frac{t}{2}<\tau_D; e_q(t)\right] & \leq \ex^x\left[\frac{t}{2}<\tau_D; e_q\left(\frac{t}{2}\right)\right] \\ 
& = \ex^x\left[\frac{t}{4} < \tau_D; e_q\left(\frac{t}{4}\right)\ex^{X\left(\frac{t}{4}\right)}\left[\frac{t}{4}< \tau_D; e_q\left(\frac{t}{4} \right)\right]\right] \\
& \leq 
\ex^x\left[\frac{t}{4} < \tau_D; e_q\left(\frac{t}{4}\right)\right] \sup_{y \in D}T_{\frac{t}{4}}\chi_{\R^d}(y)
\, .
\end{align*}
Observing that
\begin{align*}
v_D(x) & = \ex^x\left[\int_0^{\tau_D} \frac{dv}{\exp\left(\int_0^v q(X_s)ds\right)}\right] 
           \geq \ex^x\left[\frac{t}{4} < \tau_D;\int_0^{\frac{t}{4}} \frac{dv}{\exp\left(\int_0^v q(X_s)ds\right)}\right]       \\
       &   \geq \ex^x\left[\frac{t}{4} < \tau_D; \frac{\frac{t}{4}}{\exp\left(\int_0^{\frac{t}{4}} q(X_s)ds\right)}\right]
           = \frac{t}{4} \ex^x\left[\frac{t}{4} < \tau_D; e_q\left(\frac{t}{4}\right)\right]
\, 
\end{align*}
by condition (iv) of Theorem \ref{th:char}, we obtain
\begin{align*}
\ex^x\left[\frac{t}{4} < \tau_D; e_q\left(\frac{t}{4}\right)\right] \sup_{y \in D}T_{\frac{t}{4}}\chi_{\R^d}(y)
\leq C_{q,t} v_D(x) (1 + |x|)^{-d-\alpha}
\, .
\end{align*}
Consequently, 
\begin{align}
\label{eq:sem1}
\ex^x\left[\frac{t}{2}<\tau_D; e_q(t)\right] \leq C_{q,t} v_D(x) (1 + |x|)^{-d-\alpha}
\, .
\end{align}
 
Now we find the upper bound for the second expected value in \eqref{eq:sem}. The strong Markov property, \eqref{eq:IWF}, \eqref{eq:est}, condition (iv) and \eqref{eq:cruest} yield
\begin{align*}
& \ex^x \left[\frac{t}{2} \geq \tau_D; e_q(t)\right] \leq \ex^x \left[e_q(\tau_D)\ex^{X_{\tau_D}} \left[e_q\left(\frac{t}{2}\right)\right]\right]  \\
      & = \ex^x \left[X_{\tau_D} \in B(x,r); e_q(\tau_D)\ex^{X_{\tau_D}} \left[e_q\left(\frac{t}{2}\right)\right]\right]
      + \ex^x \left[X_{\tau_D} \in B(x,r)^c; e_q(\tau_D)\ex^{X_{\tau_D}} \left[e_q\left(\frac{t}{2}\right)\right]\right]                           \\
      & \leq  u_D(x) \sup_{y \in B(x,r)} T_{t/2}\chi_{\R^d}(y) + C \int_D V_D(x,y) \int_{B(x,r)^c} T_{t/2}\chi_{\R^d}(z) |z-y|^{-d-\alpha}dzdy    \\
      & \leq C_{q,t}v_D(x)(1 + |x|)^{-d-\alpha} + C_{q,t} v_D(x) \int_{B(x,r)^c} (1 + |z|)^{-d-\alpha} |z-x|^{-d-\alpha}dz                       \\
      & \leq C_{q,t}v_D(x)(1 + |x|)^{-d-\alpha}      
\, .
\end{align*}
By \eqref{eq:sem} and \eqref{eq:sem1}, this gives  $T_t\chi_{\R^d}(x) \leq C_{q,t} v_D(x) (1 + |x|)^{-d-\alpha}$ for $|x| > 3$.

For $|x| \leq 3$ let $D = B(x,1)$. By \eqref{eq:sem} and \eqref{eq:est} we have
\begin{eqnarray*}
T_t\chi_{\R^d}(x) &\le& E^x\left[\frac{t}{2} < \tau_D; \frac{1}{t/2} \int_{0}^{t/2} e_q(s) \, ds \right] + E^x\left[\frac{t}{2} \ge \tau_D; e_q(t)\right] \\
&\le& E^x\left[\frac{t}{2} < \tau_D; \frac{1}{t/2} \int_{0}^{\tau_D} e_q(s) \, ds \right] + E^x\left[ e_q(\tau_D)\right] \\
&\le& C_t v_D(x) + u_D(x) \le C_{q,t} v_D(x) \le C_{q,t} v_D(x) (1 + |x|)^{-d - \alpha}.
\end{eqnarray*}
Finally, by Theorem \ref{th:eig}, we have $T_t\chi_{\R^d}(x) \leq C_{q,t} \varphi_1(x)$. 
\end{proof}

\begin{proof}[Proof of Theorem \ref{th:suff}]
Since $\lim_{|x| \rightarrow \infty}\frac{q(x)}{\log |x|} = \infty$, we have $\lim_{|x| \rightarrow \infty}q(x) = \infty$. Hence, by Lemma \ref{lm:compact}  each $T_t$ is compact. Moreover, we observe that
\begin{align*}
\ex^x[t<\tau_{\overline{B}(0,r)^c}; e_q(t)] & \leq  \exp(- \lambda(r) t )
\, , &  \, x \in \overline{B}(0,r)^c, r>0
\, ,
\end{align*}
where $\lambda(r)= \inf_{y \in \overline{B}(0,r)^c} q(y)$. By Theorem \ref{th:char} it is enough to show that $\exp(- \lambda(r) t ) \leq C (1 + r)^{-d-\alpha}$. But, by the assumption, there is $R>0$ such that $\lambda(r) \geq \frac{d+\alpha}{t} \log(1+r)$ for $r>R$. Thus the desired inequality holds for $r>R$. When $r \leq R$, then simply $\exp(- \lambda(r) t ) \leq 1 = (1+R)^{d+\alpha} (1+R)^{-d-\alpha} \leq C (1+r)^{-d-\alpha}$.
\end{proof}

\begin{proof}[Proof of Theorem \ref{th:nec}]
Set $r = \frac{|x|}{2}$ for $|x| \geq 2$ and $D=B(x,\epsilon)$ for an arbitrary $0< \epsilon \leq 1$. By condition (iii) of Theorem \ref{th:char} we have for $|x| \geq 2$, $t > 0$,
\begin{align*}
\pr^x(t<\tau_D) e^{-\sup_{y \in D} q(y) t} \leq \ex^x[t<\tau_D; e_q(t)] \leq \ex^x[t<\tau_{\overline{B}(0,r)^c}; e_q(t)]  \leq  C_{q,t} (1 + r)^{-d-\alpha}
\, .
\end{align*}
Hence for $0< t \leq 1$ and $|x| \geq 2$
\begin{align*}
\pr^0(1<\tau_{B(0,\epsilon)}) e^{-\sup_{y \in D} q(y) t} \leq C_{q,t} |x|^{-d-\alpha}
\, .
\end{align*}
It follows that
\begin{align*}
e^{-\sup_{y \in D} q(y) t} \leq C_{q,t,\epsilon} |x|^{-d-\alpha}
\, 
\end{align*}
and, consequently,
\begin{align*}
\frac{\sup_{y \in D} q(y)}{\log|x|} \geq \frac{\alpha +d}{t} - \frac{C_{q,t,\epsilon}}{t \log|x|} 
\, .
\end{align*}
We conclude that $\liminf_{|x| \rightarrow \infty} \frac{\sup_{y \in D} q(y)}{\log|x|} \geq \frac{\alpha +d}{t}$ for any $0 < t \leq 1$. 
\end{proof}

\section{Potentials comparable on unit balls}

\begin{proof}[Proof of Theorem \ref{th:eig1}]
We fix $x \in \R^d$. Let $M=M_{q,x}$ and $D=B(x,1)$. We have
\begin{align*}
v_D(x) & = \ex^x \left[ \int_0^{\tau_D} \exp \left( -\int_0^t q(X_s) ds \right) dt\right]  
           \geq \ex^x \left[ \int_0^{\tau_D} \exp \left( -M (1 + q(x))t  \right) dt\right] \\
       & = \frac{\ex^x \left[ 1 - \exp \left( -M (1 + q(x))\tau_D  \right) \right]}{M(1 + q(x))} 
           \geq \frac{\ex^x \left[\tau_D \geq 1; 1 - \exp \left( -M (1 + q(x))\tau_D  \right) \right]}{M(1 + q(x))} \\
       & \geq \ex^x \left[\tau_D \geq 1; 1 - e^{-M} \right](M(1 + q(x)))^{-1} \geq  2^{-1}\pr^0(\tau_{B(0,1)} \geq 1)M^{-1} (1 + q(x))^{-1}
\, .
\end{align*}
On the other hand,
\begin{align*}
v_D(x) & = \ex^x \left[ \int_0^{\tau_D} \exp \left( -\int_0^t q(X_s) ds \right) dt\right] 
           \leq \ex^x \left[ \int_0^{\tau_D} \exp \left( -M^{-1} (1 + q(x))t  \right) dt\right] \\
       & = \ex^x \left[ 1 - \exp \left( -M^{-1}(1 + q(x))\tau_D  \right) \right]M(1 + q(x))^{-1} 
           \leq \frac{M}{1 + q(x)}                     
\, .
\end{align*}
The estimate \eqref{eq:eig1} is a simple consequence of Theorem \ref{th:eig} and the above inequalities.
\end{proof} 

\begin{proof}[Justification of Example \ref{ex:ex2}]
Let $x_n, y_n \in \R^d$ be sequences such that $|x_n|=n-1+2r_n$ and $|y_n|=n-1-3r_n$, $|x_n - y_n|=5r_n$. Denote: $D_n=B(x_n,1)$, $D_n^{'}=B(x_n,r_n)$, $B_n=B(y_n,2r_n)$, $B_n^{'}=B(y_n,r_n)$. Recall that $r_n = \frac{1}{a_n^{1/\alpha}}$, $a_1 > 2^\alpha$. Hence $D_n^{'} \subset D_n$. Let $n$ be large enough, so that $B_n \subset D_n$ (see Figure 1).

\begin{figure}[t!]
\centering
\includegraphics[width=11cm]{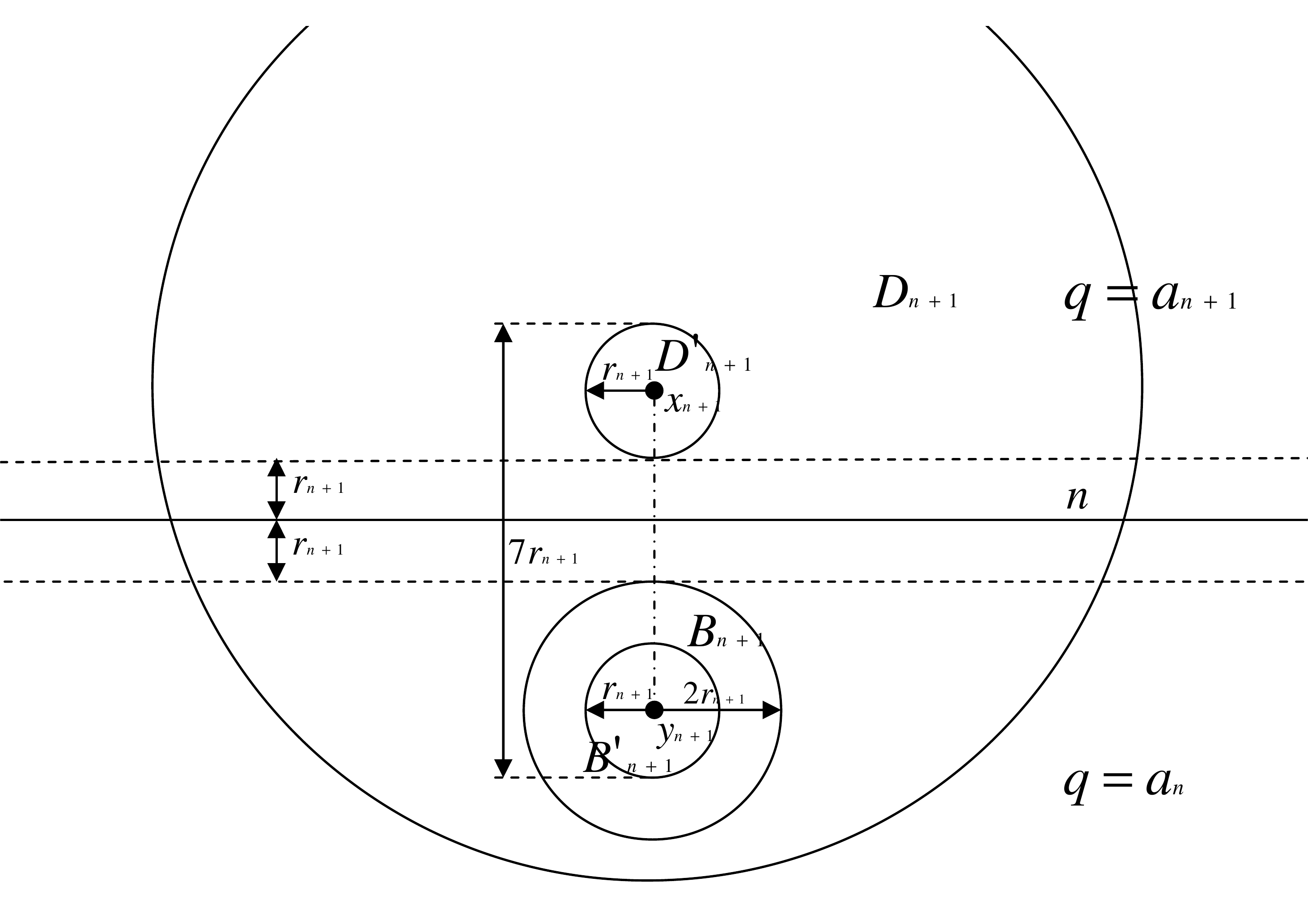}
\caption{Illustration of notation in the justification of Example \ref{ex:ex2} for $d=2$}
\end{figure}

By \eqref{eq:pot1} and \eqref{eq:IWF}, we have
\begin{align*}
v_{D_{n+1}}(x_{n+1}) & \geq \ex^{x_{n+1}} \left[ e_q\left(\tau_{D_{n+1}^{'}}\right) v_{D_{n+1}}\left(X_{\tau_{D_{n+1}^{'}}}\right)\right]  \\
       &  \geq \ex^{x_{n+1}} \left[X_{\tau_{D_{n+1}^{'}}} \in B_{n+1}^{'}; e_q\left(\tau_{D_{n+1}^{'}}\right)    
               v_{D_{n+1}}\left(X_{\tau_{D_{n+1}^{'}}}\right)\right]                                                                               \\
       & =    \cA \int_{D_{n+1}^{'}} V_{D_{n+1}^{'}}(x_{n+1},y) \int_{B_{n+1}^{'}} \frac{v_{D_{n+1}}(z)}{|y-z|^{d+\alpha}}dzdy                        \\
       & \geq \cA \int_{D_{n+1}^{'}} V_{D_{n+1}^{'}}(x_{n+1},y) \int_{B_{n+1}^{'}} \frac{v_{B_{n+1}}(z)}{|y-z|^{d+\alpha}}dzdy                        \\
       & \geq \cA\inf_{z \in B_{n+1}^{'}} v_{B_{n+1}}(z) \frac{Cr_{n+1}^d}{(7r_{n+1})^{d+\alpha}} \int_{D_{n+1}^{'}} V_{D_{n+1}^{'}}(x_{n+1},y) dy   \\          & =     C (r_{n+1})^{-\alpha} \inf_{z \in B_{n+1}^{'}} v_{B_{n+1}}(z)  v_{D_{n+1}^{'}}(x_{n+1})                           
\, .
\end{align*}

It is enough to estimate $\inf_{z \in B_{n+1}^{'}} v_{B_{n+1}}(z)$ and $v_{D_{n+1}^{'}}(x_{n+1})$. By \eqref{eq:surv} we obtain for $x \in B_{n+1}^{'}$
\begin{align*}
v_{B_{n+1}} & (x) = \ex^{x} \left[\int_0^{\tau_{B_{n+1}}}e^{-a_n t} dt \right] = \frac{\ex^{x}\left[1-e^{-a_n\tau_{B_{n+1}}}\right]}{a_n}
                 \geq \frac{\ex^{x}\left[\tau_{B_{n+1}} > a_n^{-1}; 1-e^{-a_n\tau_{B_{n+1}}}\right]}{a_n}    \\
               & \geq \frac{\ex^{x}\left[\tau_{B_{n+1}} > a_n^{-1}; 1-e^{-1}\right]}{a_n} 
                 = \frac{C\pr^x(\tau_{B_{n+1}} > a_n^{-1})}{a_n}                            
                 \geq \frac{C}{a_n} \frac{\delta_{B_{n+1}}^{\alpha/2}(x)}{a_n^{-1/2}} \geq C \frac{r_{n+1}^{\alpha/2}}{a_n^{1/2}}
                 =\frac{C}{\sqrt{a_n a_{n+1}}}
\, .
\end{align*}
By the same argument, we have $v_{D_{n+1}^{'}}(x_{n+1}) \geq C a_{n+1}^{-1}$. Hence
\begin{align*}
v_{D_{n+1}}(x_{n+1}) & \geq C (a_n a_{n+1})^{-1/2} (r_{n+1})^{-\alpha} a_{n+1}^{-1} = C (a_n a_{n+1})^{-1/2}                  
\, .
\end{align*}
It follows that $q(x_{n+1})v_{D_{n+1}}(x_{n+1}) = a_{n+1} v_{D_{n+1}}(x_{n+1}) \geq C \sqrt{\frac{a_{n+1}}{a_n}} \rightarrow \infty$ as $n \rightarrow \infty$. Due to \eqref{eq:eig0} the upper bound estimate in \eqref{eq:eig2} does not hold.
\end{proof}

\medskip

\begin{flushleft}
\textbf{Acknowledgements.} The authors thank M. Kwa\'snicki for many discussions on intrinsic ultracontractivity.
\end{flushleft}

\medskip

\end{document}